\newtheorem{theorem}{Theorem}[section]
\newtheorem{lemma}[theorem]{Lemma}
\newtheorem{proposition}[theorem]{Proposition}
\newtheorem{corollary}[theorem]{Corollary}
\newtheorem{remark}[theorem]{Remark}
\newtheorem{example}[theorem]{Example}
\theoremstyle{remark}{
}
\theoremstyle{definition}{
\newtheorem{definition}[theorem]{Definition}}
\newcommand{\hh}{{\mathbb{H}}}
\newcommand{\rr}{{\mathbb{R}}}
\newcommand{\zz}{{\mathbb{Z}}}
\newcommand{\nn}{{\mathbb{N}}}
\newcommand{\ext}{\textnormal{ext}}
\newcommand{\s}{{\mathbb{S}}}
\newcommand\re{\operatorname{Re}}
\newcommand\im{\operatorname{Im}}
\newcommand{\hslashslash}{%
  \raisebox{.9ex}{%
    \scalebox{.7}{%
      \rotatebox[origin=c]{18}{$-$}%
    }%
  }%
}
\newcommand{\fslash}{%
  {%
   \vphantom{f}%
   \ooalign{\kern.05em\smash{\hslashslash}\hidewidth\cr$f$\cr}%
   \kern.05em
  }%
}
\title{\bf Geometric function theory over quaternionic slice domains}
\author{Graziano Gentili, Caterina Stoppato\\
\\
\small Dipartimento di Matematica e Informatica ``U. Dini'', Universit\`a degli Studi di Firenze \\
\small Viale Morgagni 67/A, I-50134 Firenze, Italy\\
\small graziano.gentili@unifi.it, caterina.stoppato@unifi.it}
\date{  }
\begin{document}

\maketitle


\begin{abstract}
The theory of quaternionic slice regular functions was introduced in 2006 and successfully developed for about a decade over symmetric slice domains, which appeared to be the natural setting for their study. Some recent articles paved the way for a further development of the theory: namely, the study of slice regular functions on slice domains that are not necessarily symmetric. The present work is a panorama of geometric function theory in this new context, where new phenomena appear. For instance, the nature of the zero sets can be drastically different than in the symmetric case. The work includes differential, algebraic, topological properties, as well as integral and series representations, of slice regular functions over slice domains.
\end{abstract}


\thanks{\small \noindent{\bf Acknowledgements.} This work was partly supported by INdAM, through: GNSAGA; INdAM project ``Hypercomplex function theory and applications''. It was also partly supported by MIUR, through the projects: Finanziamento Premiale FOE 2014 ``Splines for accUrate NumeRics: adaptIve models for Simulation Environments''; PRIN 2017 ``Real and complex manifolds: topology, geometry and holomorphic dynamics''.}


\section{Introduction}\label{sec:introduction}

The works~\cite{cras,advances} presented a possible quaternionic analog of the theory of holomorphic complex functions, namely, the theory of quaternionic \emph{slice regular functions}. The theory, after a first phase focused on Euclidean balls centered at the origin, made a step forward after the work~\cite{advancesrevised}. This work presented several foundational results, which made the study of slice regular functions interesting on a larger class of domains: the so-called \emph{symmetric slice domains}. On such domains, the theory of slice regular functions turned out to be rich and to have several useful applications to open problems in other areas of mathematics. For a panorama, see~\cite{librospringer}, as well as~\cite{librodaniele2} and subsequent articles.

The recent work~\cite{douren1} showed that slice regular functions on slice domains that are not symmetric do not necessarily extend to the so-called symmetric completions of their domains. This motivated the subsequent works~\cite{douren2,dourensabadini,localrepresentation}. A full development of a theory of slice regular functions on slice domains that are not necessarily symmetric became an intriguing problem. In the present work, we tackle this problem and prove local versions of many properties valid over symmetric slice domains. Besides expected results, we find new, unexpected phenomena. For instance, the zero sets of slice regular functions can be drastically different than in the case of symmetric slice domains: in that case, the zero sets consist of isolated zeros or isolated $2$-spheres of a special type; on general slice domains, each such $2$-sphere may include areas where the function vanishes identically and areas where it does not. While every zero of a slice regular function $g$ can still be factored out, not every factor implies a zero of $g$. These ``ghost'' zeros can, however, play a role in the zero sets of regular products $f*g$ and of the regular conjugate $g^c$. These new phenomena concerning zeros are reflected in the classification of singularities of slice regular functions on slice domains. Both as a tool for the present study and for its independent interest, we prove in this work a stronger version of the Local Extension Theorem, namely~\cite[Theorem 3.2]{localrepresentation}.

The paper is structured as follows. Section~\ref{sec:preliminaries} is devoted to preliminary material. In Section~\ref{sec:differential} we begin the study of real differentials of slice regular functions over a slice domain and in Section~\ref{sec:algebraic} we construct an algebraic structure on the set of such functions. Section~\ref{sec:zerosets} presents a first study of the zero sets. In Section~\ref{sec:quotients}, we construct reciprocals and quotients of slice regular functions. Besides its independent interest, this construction allows us to further the study of zeros in Section~\ref{sec:factorization}, which presents new factorization results. Section~\ref{sec:applications} applies these factorization results both to further study the zero sets and to characterize the points where the real differential of a slice regular function is singular. In Section~\ref{sec:singularities}, we classify the singularities of slice regular functions over slice domains finding that, as it happened with zeros, they may have a peculiar structure when the slice domain is not symmetric. Section~\ref{sec:min} comprises versions of the Minimum Modulus Principle and Open Mapping Theorem for these functions. In Section~\ref{sec:integral}, we prove some integral representation formulas. In order to do so, we prove a stronger version of the Local Extension Theorem, namely~\cite[Theorem 3.2]{localrepresentation}. Section~\ref{sec:sphericalseries} presents a local version of the so-called spherical series expansions for slice regular functions.


\section{Preliminaries}\label{sec:preliminaries}

In this section devoted to preliminaries, we follow the presentation of~\cite[Chapter 1]{librospringer} (which derived from~\cite{cauchy,advancesrevised,open,cras,advances,poli}). The real algebra of quaternions will be denoted as $\hh=\rr+i\rr+j\rr+k\rr$; the real axis as $\rr$; the $2$-sphere of quaternionic imaginary units as $\s$; and the real subalgebra generated by $1$ and by any $I\in\s$ as
\[L_I:=\rr+I\rr\,.\]
If $T \subseteq \hh$, it is customary to set, for each $I \in \s$, the notations $T_I := T \cap L_I$,
\[L_J^+:=\{x+yJ : x,y\in\rr,y>0\}\]
and $T_J^+:=T\cap L_J^+$. As usual, a \emph{domain} in $\hh$ is a nonempty open connected subset of $\hh$.

\begin{definition}\label{sliceregular}
Let $f$ be a quaternion-valued function defined on a domain $\Omega$. For each $I \in \s$, let $f_I := f_{|_{\Omega_I}}$ be the restriction of $f$ to $\Omega_I$. The restriction $f_I$ is called \emph{holomorphic} if it has continuous partial derivatives and
\begin{equation}
\bar \partial_I f(x+yI) := \frac{1}{2} \left( \frac{\partial}{\partial x} + I \frac{\partial}{\partial y} \right) f_I(x+yI) \equiv 0.
\end{equation}
The function $f$ is called \emph{slice regular} if, for all $I \in \s$, $f_I$ is holomorphic.
\end{definition}

For future reference, we recall the following definition, valid for any slice regular function $f:\Omega\to\hh$. After setting
\[\partial_I f(x+yI) := \frac{1}{2} \left( \frac{\partial}{\partial x} - I \frac{\partial}{\partial y} \right) f_I(x+yI)\]
for $x+yI\in\Omega_I$, the \emph{Cullen derivative} (or complex derivative) $f'_c:\Omega\to\hh$ of $f$ is defined as the slice regular function whose restriction $(f'_c)_I$ equals $\partial_I f$ for each $I\in\s$.

\begin{example}
For any $n\in\nn$ and any choice of $a_0,a_1\ldots,a_n$ in $\hh$, the quaternionic polynomial $f(q)=a_0+qa_1+\ldots+q^na_n$ is a slice regular function $\hh\to\hh$. It holds $f'_c(q)=a_1+q2a_2+\ldots+q^{n-1}na_n$.
\end{example}

Power series centered at $0$ provide further examples of slice regular functions, see~\cite[Theorem 1.6]{librospringer}. The analogs at points $p$ other than $0$ have been studied and slice regular functions turned out to admit power series expansions at all points $p$ of their domains, in an appropriate sense, see~\cite[Chapter 2]{librospringer} (derived from~\cite{powerseries}). The latter property is subsumed in Theorem~\ref{laurent} of the present work.

The theory of slice regular functions turned out to be particularly interesting over the class of domains specified in the next definition, because of the subsequent results.

\begin{definition}\label{slicedomain}
Let $\Omega$ be a domain in $\hh$ that intersects the real axis. $\Omega$ is called a \emph{slice domain} if, for all $I \in \s$, the intersection $\Omega_I$ with the complex plane $L_I$ is a domain of $L_I$.
\end{definition}

\begin{theorem}[Identity Principle]\label{identity}
Let $f,g$ be slice regular functions on a slice domain $\Omega$. If, for some $I \in \s$, $f$ and $g$ coincide on a subset of $\Omega_I$ having an accumulation point in $\Omega_I$, then $f = g$ in $\Omega$.
\end{theorem}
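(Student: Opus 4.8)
The plan is to reduce the quaternionic Identity Principle to the classical complex one, slice by slice, and then to propagate the information from a single slice to all of $\hh$ using the connectedness built into the definition of a slice domain. Set $h := f - g$; by linearity of $\bar\partial_J$ in Definition~\ref{sliceregular}, the difference $h$ is again slice regular on $\Omega$, so it suffices to show that if $h$ vanishes on a subset of $\Omega_I$ with an accumulation point in $\Omega_I$, then $h \equiv 0$ on $\Omega$.

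First I would work on the single plane $L_I$. The restriction $h_I = h_{|_{\Omega_I}}$ is, by hypothesis of slice regularity, a holomorphic function $\Omega_I \to \hh$ in the sense of Definition~\ref{sliceregular}; fixing a unit $J \in \s$ orthogonal to $I$, we may write $h_I = F + GJ$ with $F, G$ classical $L_I \cong \cc$-valued holomorphic functions on the planar domain $\Omega_I$ (this is the standard ``splitting'' trick, and $\Omega_I$ is genuinely a domain of $L_I$ precisely because $\Omega$ is a slice domain). Since $h_I$ vanishes on a set with an accumulation point in $\Omega_I$, both $F$ and $G$ vanish on that set, so by the classical Identity Principle $F \equiv G \equiv 0$ on the connected open set $\Omega_I$; hence $h_I \equiv 0$, i.e. $h$ vanishes identically on $\Omega_I$.

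Next I would upgrade ``$h$ vanishes on $\Omega_I$'' to ``$h$ vanishes on $\Omega_K$ for every $K \in \s$''. The key observation is that $\Omega_I$ contains a real interval: since $\Omega$ intersects $\rr$ and $\Omega_I$ is a domain of $L_I$ containing a real point, $\Omega_I \cap \rr$ is a nonempty open subset of $\rr$, hence contains an interval, and this interval is common to every slice $\Omega_K$. On this real interval $h$ is already known to vanish (it lies in $\Omega_I$). Now fix an arbitrary $K \in \s$: the restriction $h_K$ is holomorphic on the domain $\Omega_K$ of $L_K$, and it vanishes on a real interval, which certainly has an accumulation point in $\Omega_K$; applying the previous paragraph with $K$ in place of $I$ gives $h \equiv 0$ on $\Omega_K$. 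Since $K$ was arbitrary and $\Omega = \bigcup_{K \in \s} \Omega_K$, we conclude $h \equiv 0$ on all of $\Omega$, i.e. $f = g$.

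The only genuinely delicate point — and the step I expect to need the most care — is the passage through the real axis: one must verify that the accumulation point hypothesis on a single slice $\Omega_I$ actually forces vanishing on a set (the real interval) that is shared by all slices, which is exactly where the ``slice domain'' hypothesis (that $\Omega$ meets $\rr$ and that each $\Omega_I$ is connected) is indispensable; without it the information on one slice cannot be transferred to the others. Everything else is a routine invocation of the complex-analytic Identity Principle together with the splitting lemma.
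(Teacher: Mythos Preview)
Your argument is correct and is precisely the standard proof of the Identity Principle for slice regular functions; the paper does not supply its own proof of this statement, as Theorem~\ref{identity} is quoted in Section~\ref{sec:preliminaries} as a known preliminary result from~\cite[Chapter 1]{librospringer}. The approach you describe---splitting $h_I$ into two $L_I$-valued holomorphic components, invoking the classical Identity Principle on the planar domain $\Omega_I$, and then propagating through the shared real interval $\Omega\cap\rr$ to every other slice---is exactly the proof given in that reference.
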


\begin{definition}\label{axiallysymmetric}
A set $T \subseteq \hh$ is called \emph{(axially) symmetric} if, for all points $x+yI \in T$ with $x,y \in \rr$ and $I \in \s$, the set $T$ contains the whole sphere $x+y\s$.
\end{definition}

\begin{theorem}[Representation Formula]\label{R-representationformula}
Let $f$ be a slice regular function on a symmetric slice domain $\Omega$ and let $x+y\s \subset \Omega$. For all $I,J,K \in \s$ with $J \neq K$
\begin{eqnarray}
f(x+yI) &=& (J-K)^{-1} \left[J f(x+yJ) - K f(x+yK)\right] +\label{generalrepresentationformula} \\
&+& I (J-K)^{-1} \left[f(x+yJ) - f(x+yK)\right]\,.\nonumber
\end{eqnarray}
Moreover, the quaternion $b := (J-K)^{-1} \left[J f(x+yJ) - K f(x+yK)\right]$ and the quaternion $c := (J-K)^{-1} \left[f(x+yJ) - f(x+yK)\right]$ do not depend on $J,K$ but only on $x,y$.
\end{theorem}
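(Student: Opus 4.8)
The plan is to prove the Representation Formula by first establishing the linear‑in‑$I$ structure and then deducing the independence of the coefficients from the choice of $J,K$. The starting point is the observation that, on a symmetric slice domain, once we fix $x+y\s\subset\Omega$ the three values $f(x+yI)$, $f(x+yJ)$, $f(x+yK)$ for distinct imaginary units cannot be arbitrary: the restrictions $f_J$ and $f_K$ are holomorphic on the planar slices, and the key fact to exploit is that the function $I\mapsto f(x+yI)$ is \emph{affine} in $I$ along the sphere. Concretely, I would first prove the following claim: for every slice regular $f$ on a symmetric slice domain and every sphere $x+y\s\subset\Omega$, there exist quaternions $b=b(x,y)$ and $c=c(x,y)$ such that $f(x+yI)=b+Ic$ for all $I\in\s$.

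To establish this claim, fix $J\in\s$ and write $f_J(x+yJ)=\alpha+J\beta$ and $f_J(x-yJ)=f(x+y(-J))=\alpha'+J\beta'$ with $\alpha,\beta,\alpha',\beta'\in\hh$; since $f_J$ is holomorphic on $\Omega_J$, which is a domain in $L_J$ symmetric under conjugation (because $\Omega$ is symmetric and a slice domain), the values at $x+yJ$ and $x-yJ$ determine candidate coefficients $b:=\tfrac12\big(f(x+yJ)+f(x-yJ)\big)$ and $c:=\tfrac{-J}{2}\big(f(x+yJ)-f(x-yJ)\big)$, i.e. the "spherical value" and "spherical derivative" at $x+y\s$. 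I then need to check that $f(x+yI)=b+Ic$ holds for \emph{every} $I\in\s$, not just $I=\pm J$. The cleanest way is to define the right‑hand side $\tilde f(x+yI):=b+Ic$ as a function on the sphere, extend it slice‑regularly (it is the obvious slice function, holomorphic on each slice because $b,c$ do not depend on the slice and $x+yI$ is holomorphic), and observe that $\tilde f$ agrees with $f$ on the slice $L_J$ — indeed $b+Jc=\tfrac12(f(x+yJ)+f(x-yJ))+\tfrac12(f(x+yJ)-f(x-yJ))=f(x+yJ)$ and similarly $b-Jc=f(x-yJ)$, and more is true: $\tilde f_J$ and $f_J$ are both holomorphic on $\Omega_J$ and coincide at two points of the same connected slice — but two points are not enough for a holomorphic identity, so instead I should argue that $\tilde f_J$ and $f_J$ agree on the whole curve $\{x+yI:I\in\s\}\cap L_J=\{x\pm yJ\}$ is insufficient and switch strategy: it is better to fix the plane $L_J$, restrict attention to the analytic disc parametrised by the sphere, and invoke the genuine statement that a slice regular function is determined on a symmetric slice domain by its restriction to one slice (Theorem~\ref{identity}). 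Thus the correct route is: show $\tilde f$ is slice regular on $\Omega$, show $\tilde f = f$ on $\Omega_J$ using that both are holomorphic there and the explicit two‑point matching plus the fact that on $\Omega_J$ every slice regular function restricted to $L_J$ lies in the $\cc$‑module spanned by $1$ — no; the honest and standard argument is simply that $f$ restricted to $\Omega_J$ equals $F_1+JF_2$ for holomorphic $\cc$‑valued $F_1,F_2$ (splitting lemma), and one reads off that $b,c$ reproduce $f$ on all of $\Omega$ by the representation of $f$ as a slice function induced by $F_1,F_2$; then the formula on the sphere follows by evaluation.

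Granting the claim $f(x+yI)=b+Ic$, the stated formula is pure linear algebra: plugging $I=J$ and $I=K$ gives the linear system $f(x+yJ)=b+Jc$, $f(x+yK)=b+Kc$ in the unknowns $b,c\in\hh$, and since $J\neq K$ the difference $J-K$ is invertible (a nonzero purely imaginary quaternion, hence invertible), so subtracting yields $f(x+yJ)-f(x+yK)=(J-K)c$, i.e. $c=(J-K)^{-1}[f(x+yJ)-f(x+yK)]$, and multiplying the two equations by $J$ and $K$ respectively and subtracting yields $Jf(x+yJ)-Kf(x+yK)=(J-K)b+(J K-K J)c$; here one must be careful because $JK-KJ\neq0$ in general — so instead I would solve by $b = f(x+yJ)-Jc$ directly from the first equation, then substitute the closed form of $c$, and finally verify algebraically that $f(x+yJ)-J(J-K)^{-1}[f(x+yJ)-f(x+yK)]$ equals the asserted $(J-K)^{-1}[Jf(x+yJ)-Kf(x+yK)]$; this last identity is a short manipulation using $(J-K)^{-1}(J-K)=1$ written as $(J-K)^{-1}J-(J-K)^{-1}K=1$. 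Substituting $b$ and $c$ back into $f(x+yI)=b+Ic$ gives \eqref{generalrepresentationformula}, and the "moreover" part is immediate: $b$ and $c$ were produced from $f$ and the sphere $x+y\s$ alone (as the spherical value and spherical derivative), with no reference to $J,K$, so any two admissible choices of $J,K$ must yield the same $b,c$.

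The main obstacle is the claim that $I\mapsto f(x+yI)$ is affine in $I$, i.e. that the same $b,c$ work for \emph{all} $I\in\s$ simultaneously; everything else is formal. This is exactly the point where symmetry of $\Omega$ and the Identity Principle (Theorem~\ref{identity}) are indispensable, and the proof should be organized so that this affineness is derived once — via the splitting of $f_J$ into two holomorphic $\cc$‑valued components and the resulting description of $f$ as an induced slice function — with the representation formula then falling out as a corollary of elementary quaternionic linear algebra.
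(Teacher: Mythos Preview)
The paper does not prove Theorem~\ref{R-representationformula}; it is quoted in Section~\ref{sec:preliminaries} as a known preliminary result, drawn from the literature (see~\cite{advancesrevised} and~\cite[Chapter~1]{librospringer}). There is therefore no in‑paper proof to compare against.

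Your approach is the standard one and is essentially correct: first establish that $I\mapsto f(x+yI)$ is affine, i.e.\ $f(x+yI)=b+Ic$ for quaternions $b,c$ depending only on $x,y$; then recover $b,c$ from the values at any two distinct $J,K\in\s$ by linear algebra. Two remarks. First, your linear‑algebra step contains a slip: from $f(x+yJ)=b+Jc$ and $f(x+yK)=b+Kc$, left‑multiplying by $J$ and $K$ respectively and subtracting gives
\[
Jf(x+yJ)-Kf(x+yK)=Jb+J^2c-Kb-K^2c=(J-K)b,
\]
since $J^2=K^2=-1$ kills the $c$‑terms; there is no $(JK-KJ)c$ contribution, and no detour through $b=f(x+yJ)-Jc$ is needed. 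Second, the affineness claim is where the real content lies, and your final route (Splitting Lemma on $\Omega_J$ to write $f_J=F+GJ'$ with $F,G:\Omega_J\to L_J$ holomorphic and $J'\perp J$, build the induced slice function on $\Omega$, then invoke the Identity Principle~\ref{identity}) is the clean one; the earlier attempts via two‑point matching on a single sphere are, as you noticed yourself, insufficient and should simply be dropped from a finished write‑up.
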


Another version of the Representation Formula was proven in~\cite[Theorem 2.4]{global}, yielding that every slice regular function on a symmetric slice domain is real analytic (see~\cite[Proposition 7]{perotti}). The next two results are also very useful. We quote the second one in the slightly corrected version of~\cite[Theorem 2.1]{localrepresentation}.

\begin{lemma}[Extension Lemma]\label{extensionlemma}
Let $\Omega$ be a symmetric slice domain and let $I \in \s$. If $f_I : \Omega_I \to \hh$ is holomorphic then there exists a unique slice regular function $g : \Omega \to \hh$ such that $g_I = f_I$ in $\Omega_I$. The function $g$ is denoted by $\ext(f_I)$ and called the \emph{regular extension} of $f_I$.
\end{lemma}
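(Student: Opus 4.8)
The plan is to get uniqueness immediately from the Identity Principle and to establish existence by an explicit formula modeled on the Representation Formula. For uniqueness, suppose $g,h:\Omega\to\hh$ are slice regular with $g_I=h_I=f_I$; then $g$ and $h$ agree on $\Omega_I$, which is a domain of $L_I$ because $\Omega$ is a slice domain, so their coincidence set has an accumulation point in $\Omega_I$ and Theorem~\ref{identity} forces $g=h$. Hence the whole content is the construction of $g$.

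For the construction I would use that $\Omega$ is symmetric: for every $q\in\Omega$, written $q=x+yJ$ with $x,y\in\rr$, $y\ge 0$, $J\in\s$, the sphere $x+|y|\s$ lies in $\Omega$, so in particular $x+yI$ and $x-yI$ belong to $\Omega_I$. Define
\[g(x+yJ):=\frac{1}{2}\bigl(f_I(x+yI)+f_I(x-yI)\bigr)-\frac{1}{2}\,JI\bigl(f_I(x+yI)-f_I(x-yI)\bigr).\]
When $y=0$ the second summand vanishes and the first equals $f_I(x)$, independently of $J$, while for $y>0$ the data $(x,y,J)$ are uniquely determined by $q$; thus $g:\Omega\to\hh$ is well defined. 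Plugging in $J=I$ and $J=-I$ and using $I^2=-1$ shows at once that $g_I=f_I$, so $g$ is a candidate extension (which will be $\ext(f_I)$).

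The remaining, and really the only substantive, task is to verify that $g$ is slice regular, i.e. that $g_L$ is holomorphic for every $L\in\s$. I would identify $\Omega_I$ with the planar open set $\widetilde\Omega=\{(u,v)\in\rr^2 : u+vI\in\Omega_I\}$, which is invariant under $v\mapsto -v$ by symmetry of $\Omega$, write $f_I(u+vI)=\phi(u,v)$ — a function of class $C^1$ satisfying $\phi_u=-I\phi_v$ on $\widetilde\Omega$, subscripts denoting partial derivatives — and set $\alpha(u,v)=\tfrac12\bigl(\phi(u,v)+\phi(u,-v)\bigr)$ and $\beta(u,v)=-\tfrac12 I\bigl(\phi(u,v)-\phi(u,-v)\bigr)$, so that $\alpha$ is even and $\beta$ odd in $v$. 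Again by symmetry of $\Omega$ the parameter set of $\Omega_L$ is $\widetilde\Omega$ for every $L\in\s$, and — using the parity of $\alpha$ and $\beta$ to treat $v\ge 0$ and $v<0$ uniformly — the definition of $g$ rewrites as $g_L(u+vL)=\alpha(u,v)+L\beta(u,v)$ on $\Omega_L$. Then
\[\bar\partial_L g_L=\frac{1}{2}(\partial_u+L\partial_v)(\alpha+L\beta)=\frac{1}{2}\bigl((\partial_u\alpha-\partial_v\beta)+L(\partial_v\alpha+\partial_u\beta)\bigr),\]
and differentiating the defining formulas for $\alpha$ and $\beta$ and inserting $\phi_u=-I\phi_v$ (applied at both $(u,v)$ and $(u,-v)$) yields $\partial_u\alpha=\partial_v\beta$ and $\partial_v\alpha=-\partial_u\beta$; hence $\bar\partial_L g_L\equiv 0$. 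Since $g_L$ is also $C^1$, it is holomorphic, and as $L$ was arbitrary, $g$ is slice regular, completing the proof.

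The step I expect to need the most care is not any single computation but the bookkeeping that turns the single, sign-of-$y$–dependent formula for $g$ into the clean expression $\alpha(u,v)+L\beta(u,v)$ with $\alpha$ even and $\beta$ odd on each slice $L_L$: it is precisely this parity that lets the two Cauchy–Riemann equations for the pair $(\alpha,\beta)$ follow from the single equation $\phi_u=-I\phi_v$ for $f_I$. Everything else, including the continuity and differentiability of $\alpha$ and $\beta$, is immediate from $f_I$ being holomorphic.
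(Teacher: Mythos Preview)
The paper does not supply its own proof of this lemma: it is quoted in Section~\ref{sec:preliminaries} as a known preliminary from the literature (cf.\ \cite[Chapter~1]{librospringer}). Your argument is correct and is essentially the standard one: uniqueness via the Identity Principle, and existence by writing down the Representation-Formula-inspired extension
\[
g(x+yJ)=\tfrac12\bigl(f_I(x+yI)+f_I(x-yI)\bigr)-\tfrac12\,JI\bigl(f_I(x+yI)-f_I(x-yI)\bigr),
\]
then recasting it as $g_L(u+vL)=\alpha(u,v)+L\,\beta(u,v)$ with $\alpha$ even and $\beta$ odd, and reading off the Cauchy--Riemann system $\alpha_u=\beta_v$, $\alpha_v=-\beta_u$ from the single equation $\phi_u=-I\phi_v$. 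Your bookkeeping for the parity step and the identification of the parameter set of every slice $\Omega_L$ with $\widetilde\Omega$ (which uses the symmetry of $\Omega$) is exactly right. This is the same construction that underlies the more general Extension Formula (Theorem~\ref{extensionformulathm}) specialized to $J=I$, $K=-I$; your direct verification avoids having to worry about connectedness of $\Omega_I^+$, which that theorem would require.
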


\begin{theorem}[Extension Formula]\label{extensionformulathm}
Let $J,K$ be distinct imaginary units; let $T$ be a domain in $L_J$, such that $T_J^+$ is connected and $T \cap \rr \neq \emptyset$; let $U := \{x+yK: x+yJ \in T\}$. Choose holomorphic functions $r : T \to \hh, s : U\to \hh$ such that $r_{|_{T \cap \rr}} = s_{|_{U \cap \rr}}$. Let $\Omega$ be the symmetric slice domain such that $\Omega_J^+=T_J^+, \Omega\cap\rr=T\cap\rr$ and set, for all $x+yI \in \Omega$ with $x,y\in\rr, y\geq0$ and $I\in\s$,
\begin{eqnarray}
f(x+yI) &:=& (J-K)^{-1} \left[J r(x+yJ) - K s(x+yK)\right] + \label{extensionformula}\\
&+& I (J-K)^{-1} \left[r(x+yJ) - s(x+yK)\right] \nonumber
\end{eqnarray}
The function $f: \Omega \to \hh$ is the (unique) slice regular function on $\Omega$ that coincides with $r$ in $\Omega_J^+$, with $s$ in $\Omega_K^+$ and with both $r$ and $s$ in $\Omega\cap\rr$.
\end{theorem}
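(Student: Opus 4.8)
The plan is to verify the three required properties of the function $f$ defined by~\eqref{extensionformula} — namely slice regularity on $\Omega$, agreement with $r$ on $\Omega_J^+$ and with $s$ on $\Omega_K^+$ (hence with both on $\Omega\cap\rr$), and uniqueness — and the key tool throughout will be the Extension Formula's symmetric-slice-domain counterpart together with the Identity Principle (Theorem~\ref{identity}). First I would observe that, since $T_J^+$ is connected and $T\cap\rr\neq\emptyset$, the set $\Omega$ described in the statement is genuinely a symmetric slice domain: $\Omega_J^+ = T_J^+$ is connected, adding the real points $T\cap\rr$ keeps it connected, and taking the symmetric completion preserves connectedness and the slice-domain property. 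So the Extension Lemma~\ref{extensionlemma} applies on $\Omega$.

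Next I would establish slice regularity. The cleanest route is to note that the right-hand side of~\eqref{extensionformula} is, for fixed $x,y$ with $y\geq 0$, an affine function of $I\in\s$ of the form $b(x,y) + I\,c(x,y)$ where $b(x,y) := (J-K)^{-1}[Jr(x+yJ) - Ks(x+yK)]$ and $c(x,y) := (J-K)^{-1}[r(x+yJ) - s(x+yK)]$. One checks that $b$ and $c$ are real-analytic (indeed holomorphic in $x+yJ$ componentwise) functions on $T_J^+\cup(T\cap\rr)$, and that the compatibility hypothesis $r_{|_{T\cap\rr}} = s_{|_{U\cap\rr}}$ forces $c$ to vanish on $T\cap\rr$, so that $f$ is well-defined and continuous across the real axis (where $y=0$ and the ambiguity in $I$ is killed by $c=0$). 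Then, restricting to any $L_I$, one computes $\bar\partial_I f$ directly: writing $f_I(x+yI) = b(x,y) + Ic(x,y)$, the Cauchy–Riemann operator $\tfrac12(\partial_x + I\partial_y)$ applied to this vanishes precisely because $b$ and $c$ satisfy the Cauchy–Riemann-type system inherited from the holomorphy of $r$ on $T$ and $s$ on $U$ — this is the routine calculation I would not grind through, but it is exactly the computation underlying the classical Representation/Extension Formula over symmetric slice domains. Agreement with $r$ on $\Omega_J^+$ and $s$ on $\Omega_K^+$ is then immediate by plugging $I=J$ and $I=K$ into~\eqref{extensionformula} and simplifying $(J-K)^{-1}[Jr - Ks] + J(J-K)^{-1}[r-s]$, which collapses to $r$ (and symmetrically to $s$ for $I=K$).

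For uniqueness: suppose $\tilde f$ is another slice regular function on $\Omega$ coinciding with $r$ on $\Omega_J^+$ and with $s$ on $\Omega_K^+$. Then $f$ and $\tilde f$ agree on $\Omega_J^+$, a subset of $\Omega_J$ with accumulation points in $\Omega_J$ (since $T_J^+$ is a nonempty open subset of $L_J$), so by the Identity Principle~\ref{identity} — which applies because $\Omega$ is a slice domain — we get $f = \tilde f$ on all of $\Omega$. This also retroactively justifies that $f$ coincides with $\ext$-type extensions and that the description as "the unique slice regular function that coincides with $r$ in $\Omega_J^+$, with $s$ in $\Omega_K^+$ and with both in $\Omega\cap\rr$" is consistent: agreement on $\Omega\cap\rr$ follows from agreement on $\Omega_J^+$ by continuity, combined with $r_{|_{T\cap\rr}} = s_{|_{U\cap\rr}}$.

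The main obstacle I anticipate is the verification that $f$, as defined piecewise by the formula for $y\geq 0$ and extended to $y<0$ implicitly through the sphere-affine structure, is actually well-defined and $C^1$ across the real axis — one must check carefully that the two "halves" $L_I^+$ and $L_I^-$ of each slice glue smoothly, which hinges on $c$ vanishing to the right order on $T\cap\rr$ and on $b$ being real-analytic there; and relatedly, that the formula does not secretly depend on the choice of representative $x+yI$ for a point on a sphere. Both issues are resolved by the real-analyticity of $b$ and $c$ together with the compatibility condition, but they require a genuine (if standard) argument rather than a one-line observation. Everything else — slice regularity on each $L_I\setminus\rr$, the boundary values, and uniqueness — follows mechanically from the Cauchy–Riemann computation and the Identity Principle.
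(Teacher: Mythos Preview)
The paper does not give its own proof of this theorem: it appears in the Preliminaries (Section~\ref{sec:preliminaries}) as a result quoted from~\cite{localrepresentation} in a slightly corrected form, with no argument supplied. There is therefore nothing in the paper to compare your proposal against.

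That said, your outline is the standard one and is sound. The Cauchy--Riemann check you leave implicit is that, writing $b = (J-K)^{-1}[Jr-Ks]$ and $c=(J-K)^{-1}[r-s]$, the holomorphy conditions $\partial_x r + J\partial_y r = 0$ and $\partial_x s + K\partial_y s = 0$ yield $\partial_x b = \partial_y c$ and $\partial_y b = -\partial_x c$, which is exactly $\bar\partial_I(b+Ic)=0$ on each slice. The collapse at $I=J$ (and symmetrically at $I=K$) does require a short noncommutative computation since $J$ need not commute with $(J-K)^{-1}$, but it goes through using $|J-K|^2 = 2(1-\langle J,K\rangle)$ and $JK+KJ = -2\langle J,K\rangle$. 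Your treatment of well-definedness and smooth gluing across $\rr$ via $c|_{T\cap\rr}=0$, and of uniqueness via the Identity Principle, is correct.
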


The study over slice domains that are not symmetric has not been further developed for several years, possibly because it was commonly believed that every slice regular function on a slice domain $\Omega$ could be extended in a unique fashion to the symmetric completion $\widetilde \Omega$ of $\Omega$, in accordance with the next definition.

\begin{definition}
The \emph{(axially) symmetric completion} of a set  $T \subseteq \hh$ is the smallest symmetric set $\widetilde{T}$ that contains $T$. In other words,
\begin{equation}
\widetilde{T} := \bigcup_{x+yI \in T} (x+y\s).
\end{equation}
\end{definition}

However, the recent work~\cite{douren1} proved that not every slice regular function on a slice domain $\Omega$ extends to $\widetilde\Omega$, providing the following counterexample.

\begin{example}\label{ex:douren}
Fix an imaginary unit $I\in\s$. For $J\in\s$, consider the half line
\[h_J:=(-\infty,-2]+2J=\{s+2J : s\in (-\infty,-2]\}\]
and the closed disk
\[D_J:=\{z\in L_J : |z+1-2J|\leq1\}\,.\]
For any $t\in[0,1]$ and all $J\in\s$, consider the following arc, with endpoints $-2+2J$ and $2J$, within $D_J$:
\[\alpha_{t,J}:=-1+2J+\{(1-t)e^{2\pi J s}+t e^{-2\pi J s}: s\in[0,1/2]\}\,.\]
In particular: $\alpha_{0,J}$ is the upper half of the circle $\partial D_J$; $\alpha_{\frac12,J}$ is the line segment $[-2,0]+2J$; and $\alpha_{1,J}$ is the lower half of the circle $\partial D_J$ within $L_J^+$.

For $t\in[0,1]$, consider the unique holomorphic function $\phi_t: L_I\setminus(h_I\cup\alpha_{t,I}) \to L_I$ such that $\phi_t(x+2I)=\ln(x)$ for all $x\in(0,+\infty)$. For all $x,y\in\rr$ such that $x+yI\not\in h_I\cup\alpha_{t,I}$, it holds
\[\phi_t(x+yI):=\ln\sqrt{x^2+(y-2)^2}+I\arg_t(x+I(y-2))\,,\]
where $\arg_t$ denotes the argument on the domain obtained from $L_I$ by erasing both the half line $(-\infty,-2]$ and the arc $\alpha_{t,I}-2I$. Note that $\phi_t$ cannot be holomorphically extended through any point of $\alpha_{t,I}$. For each $t\in(0,1]$, let $C_t$ denote the closed subset of $D_I$ bounded by $\alpha_{0,I}$ and $\alpha_{t,I}$. Then the intersection of the domains of $\phi_t$ and $\phi_0$, namely $L_I\setminus(h_I\cup\alpha_{t,I}\cup\alpha_{0,I})$, has two connected components: the interior of $C_t$ and the set $L_I\setminus (h_I\cup C_t)$. By direct inspection, $\phi_t$ and $\phi_0$ coincide in  $L_I\setminus (h_I\cup C_t)$, while $\phi_t-\phi_0\equiv2\pi I$ in the interior of $C_t$.

If we denote the restriction of $\phi_t$ to $\Phi_t:=L_I \setminus(h_I\cup \overline{h_I}\cup\alpha_{t,I}\cup\overline{\alpha_{t,I}})$ again by $\phi_t$, we may set
\[f_t:=\ext(\phi_t):\widetilde{\Phi}_t\to\hh\,.\]
For each $t\in(0,1]$, the intersection $\widetilde{\Phi}_t\cap\widetilde{\Phi}_0$ has two connected components: the interior of $\widetilde{C}_t$ and $\hh\setminus(\widetilde{h_I}\cup\widetilde{C}_t)$. Clearly, $f_t$ and $f_0$ coincide in the latter connected component. Moreover,
\begin{align*}
f_t(x+Jy)-f_0(x+Jy)&=(2I)^{-1} \left[I2\pi I+I 0\right]+J(2I)^{-1} \left[2\pi I -0\right]\\
&=\pi (I+J)
\end{align*}
for all $x,y\in\rr$ with $y\geq0$ and for all $J\in\s$ such that $x+Jy$ belongs to the interior of $\widetilde{C}_t$. In particular, the difference is zero when $J=-I$.

Let $\Omega$ be the slice domain with the following properties: $\Omega\supset\rr$; moreover, for every $J\in\s$, 
\[\Omega_J^+:=L_J^+\setminus(h_J\cup a_J)\]
where, after setting $T(J):=\min\{|J-I|,1\}$, the arc $a_J$ is defined to equal $\alpha_{T(J),J}$. In particular, $a_I=\alpha_{0,I}$ is the upper half of the circle $\partial D_I$ within $L_I^+$ and, for every imaginary unit $J$ with $|J-I|\geq1$ (including $J=-I)$, $a_J=\alpha_{1,J}$ is the lower half of the circle $\partial D_J$ within $L_J^+$.

Define the slice regular function $f:\Omega\to\hh$ as follows: for each $J\in\s$, $f$ is defined to equal $f_{T(J)}$ in $\Omega_J^+$;  in $\rr$, $f$ equals $f_0$ (whence all $f_t$ with $t\in[0,1]$). Then $f$ admits no regular extension to $\widetilde{\Omega}=\hh\setminus(\widetilde{h_I}\cup\{2I\})$: if it did, then $f_I=(f_0)_I=\phi_0$ would extend holomorphically through $a_I=\alpha_{0,I}$ minus its endpoints; we already noted that this is impossible.
\end{example}

In~\cite{localrepresentation}, we proved the following results, valid on all quaternionic slice domains.

\begin{lemma}\label{gamma}
Let $Y$ be an open subset of $\hh$ and let $J_0\in\s$. Let $C$ be a compact and path-connected subset of $Y_{J_0}$ such that $C \cap \rr$ is a closed interval and $\emptyset\neq C \setminus \rr \subset Y_{J_0}^+$. Let $q_0 \in C$ be such that $\max_{p \in C} |\im(p)| = |\im(q_0)|$. Then there exists $\varepsilon>0$ such that
\[\Gamma(C,\varepsilon) := \bigcup_{p \in C \setminus \rr} B\left(p,\frac{|\im(p)|}{|\im(q_0)|}\varepsilon\right) \cup \bigcup_{p \in C \cap \rr} B(p,\varepsilon)\]
is a slice domain and $C \subset \Gamma(C,\varepsilon) \subseteq Y$.
\end{lemma}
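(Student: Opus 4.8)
Before the author's proof, here is how I would approach it.

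The plan is to dispatch the topological conditions in the definition of slice domain one at a time, reducing everything to the connectedness of the slices $\Gamma(C,\varepsilon)\cap L_I$. Set $h:=|\im(q_0)|$, which is strictly positive since $C\setminus\rr\neq\emptyset$, and let $d_0:=\operatorname{dist}(C,\hh\setminus Y)$, which is strictly positive because $C$ is compact, $Y$ is open and $C\subseteq Y$. I would fix any $\varepsilon$ with $0<\varepsilon<d_0$ and write $\Gamma:=\Gamma(C,\varepsilon)$. Every ball occurring in the definition of $\Gamma$ has radius at most $\varepsilon<d_0$ and is centered at a point of $C$ (note $r_p>0$ always, as $|\im(p)|>0$ for $p\in C\setminus\rr$), so $\Gamma\subseteq Y$; moreover $\Gamma$ is open, it contains $C$ since each center lies in its ball, hence $\Gamma=C\cup\bigcup_{p\in C}B(p,r_p)$ is connected as the union of the connected set $C$ with connected balls each meeting $C$, and it meets $\rr$ because $C\cap\rr\neq\emptyset$. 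So it remains only to prove that $\Gamma_I:=\Gamma\cap L_I$ is connected for every $I\in\s$.

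The one computational input I would record is the elementary identity $\operatorname{dist}(x+yJ_0,L_I)=|y|\,\sigma(I)$, where $\sigma(I):=\sqrt{1-\langle J_0,I\rangle^2}$ and $\langle\cdot,\cdot\rangle$ is the real scalar product on $\hh$; this follows by writing a generic point of $L_I$ as $u+vI$ and minimizing $(x-u)^2+y^2-2yv\langle J_0,I\rangle+v^2$, which gives $u=x$ and $v=y\langle J_0,I\rangle$. Completing the square shows that, for $p=x+yJ_0$ and $r>0$, the set $B(p,r)\cap L_I$ is empty when $r\le|y|\sigma(I)$ and is otherwise the open disc of $L_I$ centered at $c_I(p):=x+y\langle J_0,I\rangle I$ of radius $\sqrt{r^2-y^2\sigma(I)^2}$. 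Applying this with the radii in the statement: for $p\in C\cap\rr$ we obtain the disc $B(p,\varepsilon)\cap L_I$ of radius $\varepsilon$ centered at $p$; for $p\in C\setminus\rr$ we obtain $B(p,r_p)\cap L_I=\emptyset$ unless $\sigma(I)<\varepsilon/h$, in which case it is the disc centered at $c_I(p)$ of radius $|\im(p)|\,\kappa$, where $\kappa:=\sqrt{\varepsilon^2/h^2-\sigma(I)^2}>0$.

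Now split on $I$. If $\sigma(I)\ge\varepsilon/h$, the above shows $\Gamma_I=\bigcup_{p\in C\cap\rr}B(p,\varepsilon)\cap L_I$ is exactly the $\varepsilon$-neighborhood of the segment $C\cap\rr$ inside $L_I$, which is open, nonempty and connected because $C\cap\rr$ is an interval. If $\sigma(I)<\varepsilon/h$, I would argue by path-connectedness. Given $w\in\Gamma_I$, say $w\in B(p^*,r_{p^*})\cap L_I$ with $p^*\in C$, use that $C$ is path-connected and $C\cap\rr\neq\emptyset$ to choose a path $\gamma\colon[0,1]\to C$ with $\gamma(0)=p^*$ and $\gamma(1)\in C\cap\rr$, and set $D(t):=B(\gamma(t),r_{\gamma(t)})\cap L_I\subseteq\Gamma_I$, which by the case hypothesis is a nonempty disc for every $t$. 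Let $P:=\{t\in[0,1]:D(t)\text{ lies in the path-component of }w\text{ in }\Gamma_I\}$; then $0\in P$. The point is that nearby discs always overlap: if $\gamma(t_0)\notin\rr$ then $\gamma$ stays off $\rr$ near $t_0$, so $c_I(\gamma(t))$ and the radii $|\im(\gamma(t))|\kappa$ vary continuously with the latter bounded away from $0$, whence $D(t)$ meets $D(t_0)$ for $t$ near $t_0$; if $\gamma(t_0)\in\rr$ then $D(t_0)$ is the $\varepsilon$-disc about $\gamma(t_0)$, while for $t$ near $t_0$ either $\gamma(t)\in\rr$ with $|\gamma(t)-\gamma(t_0)|<\varepsilon$, or $\gamma(t)\notin\rr$ and $D(t)$ is a tiny disc whose center and radius tend to $\gamma(t_0)$ and $0$, so $D(t)\subseteq D(t_0)$ — in all cases $D(t)\cap D(t_0)\neq\emptyset$. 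This overlap makes $P$ both open and closed (a limit $t_0$ of points $t_n\in P$ has $D(t_0)$ meeting some $D(t_n)$, hence $D(t_0)$ in the path-component of $w$), so $P=[0,1]$; therefore $\gamma(1)\in D(1)\subseteq\Gamma_I$ lies in the path-component of $w$. Since $\gamma(1)\in C\cap\rr$ and $C\cap\rr$ is a connected subset of $\Gamma_I$, every point of $\Gamma_I$ is joined to $C\cap\rr$, so $\Gamma_I$ is connected.

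I expect the main obstacle to be precisely this last case, for imaginary units $I$ near $\pm J_0$: there the off-axis balls $B(p,r_p)$ genuinely contribute to $\Gamma_I$, and their traces on $L_I$ collapse to points as $p$ tends to $\rr$, so one cannot cover a path in $C$ by finitely many discs of comparable size. The clopen-set argument above is designed to bypass this, exploiting only that along the chosen path the discs remain nonempty and vary continuously away from $\rr$, and shrink in a controlled way into the fixed-radius real discs where the path meets $\rr$.
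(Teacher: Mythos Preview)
The paper does not include a proof of this lemma; it is quoted from~\cite{localrepresentation} as a result already established there. So there is no in-paper proof to compare against.

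Your argument is nonetheless correct. The computation identifying $B(x+yJ_0,r)\cap L_I$ as either empty or a disc in $L_I$ of center $x+y\langle J_0,I\rangle I$ and radius $\sqrt{r^2-y^2\sigma(I)^2}$ is right, and the scaled radii $r_p=|\im(p)|\varepsilon/|\im(q_0)|$ make the nonemptiness condition $\sigma(I)<\varepsilon/h$ independent of the basepoint $p\in C\setminus\rr$, which is exactly what gives a clean dichotomy on $I$. In the far case $\sigma(I)\ge\varepsilon/h$, only the real centers contribute and $\Gamma_I$ is an $\varepsilon$-tube about the interval $C\cap\rr$. In the near case your clopen argument on $P$ is sound: the overlap $D(t)\cap D(t_0)\neq\emptyset$ for $t$ near $t_0$ follows from continuity of centers and radii when $\gamma(t_0)\notin\rr$, and from the containment $D(t)\subseteq D(t_0)$ when $\gamma(t_0)\in\rr$ (the small discs being absorbed by the fixed $\varepsilon$-disc); the same local overlap gives both openness and closedness of $P$, so $P=[0,1]$ and each point of $\Gamma_I$ is path-connected to the interval $C\cap\rr\subset\Gamma_I$. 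Combined with the choice $\varepsilon<\operatorname{dist}(C,\hh\setminus Y)$ for the inclusion $\Gamma\subseteq Y$ and the elementary connectedness of $\Gamma$ itself, all conditions for a slice domain are verified.
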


\begin{theorem}[Local Extension]\label{localextension}
Let $f$ be a slice regular function on a slice domain $\Omega$. For every $p_0 \in \Omega$, there exist a symmetric slice domain $N$ with $N \cap \rr \subset \Omega$, a slice domain $\Lambda$ with $p_0 \in \Lambda \subseteq \Omega \cap N$, and a slice regular function $\widetilde f: N \to \hh$ such that $\widetilde f$ coincides with $f$ in $N \cap \rr$, whence in $\Lambda$.
\end{theorem}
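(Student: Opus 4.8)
The plan is to build the symmetric slice domain $N$ by means of the Extension Formula (Theorem~\ref{extensionformulathm}), fed with the slice $L_{I_0}$ through $p_0$ \emph{and} a second slice $L_{J_0}$ taken very close to $L_{I_0}$, so that a congruent copy of the relevant part of $\Omega_{I_0}$ still lies inside $\Omega$.

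If $p_0\in\rr$ there is nothing to do: any ball $B\subset\Omega$ centred at $p_0$ is a symmetric slice domain, and one takes $N:=\Lambda:=B$ and $\widetilde f:=f|_B$. So assume $p_0=x_0+y_0I_0$ with $y_0>0$ (replacing $I_0$ by $-I_0$ if needed). Since $\Omega$ is a slice domain, $\Omega_{I_0}$ is a path-connected domain of $L_{I_0}$ containing both $p_0$ and real points; I would take a path in $\Omega_{I_0}$ from $p_0$ to $\rr$ and truncate it at the first instant it meets $\rr$, getting a compact path-connected $C_0\subset\Omega_{I_0}$ with $C_0\setminus\rr\subset\Omega_{I_0}^+$ and $C_0\cap\rr=\{x_1\}$. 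Adjoining a small real segment around $x_1$ gives a set $C$ satisfying the hypotheses of Lemma~\ref{gamma} (with $Y=\Omega$), which then yields $\varepsilon>0$, taken small, such that $\Gamma:=\Gamma(C,\varepsilon)$ is a slice domain with $C\subset\Gamma\subseteq\Omega$. Setting $T:=\Gamma_{I_0}$, this is a bounded domain of $L_{I_0}$ with $C_0\subset T\subseteq\Omega_{I_0}$ and $T\cap\rr\neq\emptyset$; and for $\varepsilon$ small the balls of Lemma~\ref{gamma} centred at the non-real points of $C$ stay in $L_{I_0}^+$ and overlap along $C_0\setminus\{x_1\}$, while those centred on $C\cap\rr$ fill a neighbourhood of $x_1$, so that $T_{I_0}^+$ is connected.

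The key step is the choice of the second slice. Since $T$ is bounded and $\overline\Gamma$ stays at positive distance from $\hh\setminus\Omega$, the congruent copy $U:=\{x+yJ_0:x+yI_0\in T\}\subset L_{J_0}$ of $T$ is still contained in $\Omega$, hence in $\Omega_{J_0}$, for every $J_0\in\s$ close enough to $I_0$; I would fix such a $J_0\neq\pm I_0$. Then I would apply Theorem~\ref{extensionformulathm} with $J:=I_0$, $K:=J_0$, these $T$ and $U$, and $r:=f_{I_0}|_T$, $s:=f_{J_0}|_U$: both are holomorphic, and $r|_{T\cap\rr}=s|_{U\cap\rr}$ because $U\cap\rr=T\cap\rr$ and both restrict $f$ there. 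This delivers a symmetric slice domain $N$ with $N_{I_0}^+=T_{I_0}^+$ and $N\cap\rr=T\cap\rr$, together with a slice regular $\widetilde f:N\to\hh$ coinciding with $f$ on $N\cap\rr$; moreover $N\cap\rr=T\cap\rr\subseteq\Omega_{I_0}\cap\rr\subset\Omega$, as wanted.

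To finish, note that $C_0\setminus\{x_1\}\subset T_{I_0}^+=N_{I_0}^+$ and $x_1\in N\cap\rr$, so $p_0\in C_0\subset\Omega\cap N$; a second application of Lemma~\ref{gamma}, now with $Y:=\Omega\cap N$ and $C$ replaced by $C_0$ together with a small real segment contained in $N\cap\rr$, produces a slice domain $\Lambda$ with $p_0\in\Lambda\subseteq\Omega\cap N$. Since $f$ and $\widetilde f$ are both slice regular on $\Lambda$ and agree on $\Lambda\cap\rr$, a nonempty open subset of $\rr$ (thus with accumulation points in $\Lambda_{I_0}$), the Identity Principle (Theorem~\ref{identity}) yields $\widetilde f\equiv f$ on $\Lambda$, which is the ``whence'' clause. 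I expect the genuine obstacle to be precisely what makes the two-slice device necessary: over a non-symmetric $\Omega$ one cannot in general extend $f_{I_0}$ across $\rr$ onto the reflection $\overline T$ of $T$, since $\overline T$ need not lie in $\Omega_{I_0}$ at all — this is the phenomenon behind Example~\ref{ex:douren}. Passing to a slice $L_{J_0}$ with $J_0$ near $I_0$ bypasses it, because the congruent copy $U$ of $T$ stays inside $\Omega$ and so the second datum $s=f_{J_0}|_U$ required by the Extension Formula comes for free, with no analytic continuation at all; the remainder is the bookkeeping of the small parameters ($\varepsilon$ small enough for $T_{I_0}^+$ connected, $|J_0-I_0|$ small enough for $U\subset\Omega$) and the routine checks that Lemma~\ref{gamma} applies in both places.
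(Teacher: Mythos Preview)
Your proposal is correct and follows essentially the same route as the paper. Note that Theorem~\ref{localextension} itself is only cited here from~\cite{localrepresentation}; the argument to compare against is the proof of the stronger Theorem~\ref{localextension2}, and your scheme matches it: build a compact path $C$ from $p_0$ to $\rr$ inside $\Omega_{I_0}$, fatten it via Lemma~\ref{gamma} to a slice domain $\Gamma(C,\varepsilon)\subseteq\Omega$, pick a second imaginary unit close to $I_0$ so that the needed half-slice data is already furnished by $f$, feed the two restrictions into the Extension Formula~\ref{extensionformulathm} to obtain $\widetilde f$ on a symmetric slice domain $N$, and finish with a second use of Lemma~\ref{gamma} plus the Identity Principle.

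One point to tighten: the assertion that ``$\overline\Gamma$ stays at positive distance from $\hh\setminus\Omega$'' is not given by Lemma~\ref{gamma}, which only yields $\Gamma(C,\varepsilon)\subseteq\Omega$. The easy repair is to halve $\varepsilon$ before defining $T$, since $\overline{\Gamma(C,\varepsilon/2)}\subset\Gamma(C,\varepsilon)\subseteq\Omega$ by compactness of $C$ and continuity of the radii; then your displacement bound $y\,|J_0-I_0|$ does force $U\subset\Omega$ for $J_0$ close to $I_0$. The paper handles this slightly differently: it takes $N$ to be the symmetric completion of the $K_0$-slice $\Gamma(C,\varepsilon)_{K_0}$ (rather than of the original $I_0$-slice) and then uses the monotonicity $|x+yK_0-q|\geq|x+yI_0-q|$ for $q\in C\subset\overline{L_{I_0}^+}$ to get $N_{I_0}^+\subseteq\Gamma(C,\varepsilon)_{I_0}^+\subseteq\Omega$ directly, without a separate compactness step. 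Both variants work; the paper's is marginally cleaner because the distance inequality makes the containment automatic.
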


We will state and prove a stronger version of this result in Theorem~\ref{localextension2}. Theorem~\ref{localextension} has the following consequence.

\begin{corollary}\label{cor:realanalyticity}
Every slice regular function on a slice domain is real analytic.
\end{corollary}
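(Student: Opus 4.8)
The plan is to reduce the statement to the already-known fact that slice regular functions on \emph{symmetric} slice domains are real analytic, using the Local Extension Theorem as the bridge. Fix a slice regular function $f:\Omega\to\hh$ on a slice domain $\Omega$, and fix an arbitrary point $p_0\in\Omega$; since real analyticity is a local property, it suffices to prove that $f$ is real analytic at $p_0$. First I would invoke Theorem~\ref{localextension} to produce a symmetric slice domain $N$ with $N\cap\rr\subset\Omega$, a slice domain $\Lambda$ with $p_0\in\Lambda\subseteq\Omega\cap N$, and a slice regular function $\widetilde f:N\to\hh$ that agrees with $f$ on $N\cap\rr$, and hence on all of $\Lambda$.

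Next I would recall that $\widetilde f$, being slice regular on the symmetric slice domain $N$, is real analytic on $N$: this is the content of the version of the Representation Formula recorded just after Theorem~\ref{R-representationformula}, namely~\cite[Theorem 2.4]{global} (equivalently~\cite[Proposition 7]{perotti}). In particular, $\widetilde f$ is real analytic at $p_0$, so there is a neighborhood of $p_0$ on which $\widetilde f$ is given by a convergent power series in the real coordinates.

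Finally, since $\Lambda$ is a slice domain, it is in particular an open neighborhood of $p_0$, and on $\Lambda$ we have $f\equiv\widetilde f$. Therefore $f$ coincides near $p_0$ with the real-analytic function $\widetilde f$, whence $f$ is real analytic at $p_0$. As $p_0\in\Omega$ was arbitrary, $f$ is real analytic on $\Omega$.

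I expect no real obstacle here: all the substantive work is packaged into Theorem~\ref{localextension} and into the known real analyticity over symmetric slice domains. The only point that deserves a word of care is that $\Lambda$ genuinely is a neighborhood of $p_0$ — which holds because a slice domain is by definition open — so that the local agreement of $f$ and $\widetilde f$ on $\Lambda$ actually transfers the local property from $\widetilde f$ to $f$.
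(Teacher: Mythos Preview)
Your proposal is correct and follows exactly the approach the paper intends: the corollary is stated there as an immediate consequence of Theorem~\ref{localextension}, with real analyticity on symmetric slice domains supplied by~\cite[Theorem 2.4]{global} and~\cite[Proposition 7]{perotti}. You have simply spelled out the details that the paper leaves implicit.
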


Motivated by Theorem~\ref{localextension}, we give a new definition.

\begin{definition}
Let $f$ be a slice regular function on a slice domain $\Omega$. If $\widetilde f$ is a slice regular function on a symmetric slice domain $N$ with $N\cap\rr\subset\Omega$ and if there exists a slice domain $\Lambda\subseteq\Omega\cap N$ where $f$ and $\widetilde f$ coincide, then $\big(\widetilde f,N,\Lambda\big)$ is called an \emph{extension triplet} for $f$.
\end{definition}

In~\cite{localrepresentation}, we proved the next result.

\begin{theorem}[Local Representation Formula]\label{localrepresentationthm}
Let $\Omega$ be a slice domain and let $f : \Omega \to \hh$ be a slice regular function. For all $J,K \in \s$ with $J \neq K$ and all $x,y \in \rr$ with $y \geq 0$ such that $x+yJ, x+yK \in \Omega$, let us set
\begin{eqnarray*}
b(x+yJ,x+yK) &:=& (J-K)^{-1} \left[J f(x+yJ) - K f(x+yK)\right]\,,\\
c(x+yJ,x+yK) &:=& (J-K)^{-1} \left[f(x+yJ) - f(x+yK)\right] \,.
\end{eqnarray*}
For every $p_0 \in \Omega$, there exists a slice domain $\Lambda$ with $p_0 \in \Lambda \subseteq \Omega$ such that the following properties hold for all $x,y \in \rr$ with $y \geq 0$:
\begin{itemize}
\item If $U = (x+y\s) \cap \Lambda$ is not empty, then $b,c$ are constant in $(U \times U) \setminus \{ (u,u) : u \in U \setminus \rr\}$.
\item If $I,J,K \in \s$ with $J \neq K$ are such that $x+yI,x+yJ,x+yK \in \Lambda$, then
\begin{equation}
f(x+yI) = b(x+yJ,x+yK) + I c(x+yJ,x+yK) \,. \label{localrepresentationformula}
\end{equation}
\end{itemize}
\end{theorem}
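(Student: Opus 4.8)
The plan is to derive the statement directly from the Local Extension Theorem (Theorem~\ref{localextension}) together with the global Representation Formula (Theorem~\ref{R-representationformula}). Given $p_0\in\Omega$, I would invoke Theorem~\ref{localextension} to produce a symmetric slice domain $N$ with $N\cap\rr\subset\Omega$, a slice domain $\Lambda$ with $p_0\in\Lambda\subseteq\Omega\cap N$, and a slice regular function $\widetilde f\colon N\to\hh$ agreeing with $f$ on $N\cap\rr$, hence (by the Identity Principle on the slice domain $\Lambda$, or already by the statement of Theorem~\ref{localextension}) on all of $\Lambda$. This $\Lambda$ is the one claimed in the theorem.

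Next, since $N$ is a symmetric slice domain, I would apply the Representation Formula to $\widetilde f$: for every sphere $x+y\s\subset N$ there are quaternions $\widetilde b=\widetilde b(x,y)$ and $\widetilde c=\widetilde c(x,y)$, depending only on $x$ and $y$, with $\widetilde f(x+yI)=\widetilde b+I\widetilde c$ for all $I\in\s$, and with $\widetilde b=(J-K)^{-1}[J\widetilde f(x+yJ)-K\widetilde f(x+yK)]$ and $\widetilde c=(J-K)^{-1}[\widetilde f(x+yJ)-\widetilde f(x+yK)]$ for every choice of distinct $J,K\in\s$. The key observation is that, because $\Lambda\subseteq N$ and $N$ is symmetric, any sphere $x+y\s$ that meets $\Lambda$ is entirely contained in $N$, so the above applies. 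Hence, whenever $x+yJ,x+yK\in\Lambda$ with $J\neq K$, the equalities $f(x+yJ)=\widetilde f(x+yJ)$ and $f(x+yK)=\widetilde f(x+yK)$ give $b(x+yJ,x+yK)=\widetilde b(x,y)$ and $c(x+yJ,x+yK)=\widetilde c(x,y)$. This yields the constancy of $b,c$ over $(U\times U)\setminus\{(u,u):u\in U\setminus\rr\}$ (the degenerate case $y=0$, where $U$ is a single real point, is immediate since then $b(x,x)=f(x)$ and $c(x,x)=0$ for any choice of $J\neq K$), which is the first bullet; and for $x+yI,x+yJ,x+yK\in\Lambda$ with $J\neq K$ it gives $f(x+yI)=\widetilde f(x+yI)=\widetilde b(x,y)+I\widetilde c(x,y)=b(x+yJ,x+yK)+Ic(x+yJ,x+yK)$, which is the second bullet.

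The argument is thus a localization of the classical one, and there is no serious obstacle once Theorem~\ref{localextension} is available: the symmetric enlargement $N$ is precisely what allows the global Representation Formula to be invoked. The only points demanding care are the verification that the inclusion $\Lambda\subseteq\Omega\cap N$ and the symmetry of $N$ together force $x+y\s\subset N$ for every sphere intersecting $\Lambda$ (so that $\widetilde b,\widetilde c$ are defined there), and the bookkeeping for the degenerate slices $y=0$, where $x+y\s$ collapses to a real point and the quantities $b,c$ must be read off from the limiting convention $b(x,x)=f(x)$, $c(x,x)=0$.
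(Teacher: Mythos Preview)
Your argument is correct. Note, however, that the present paper does not itself prove Theorem~\ref{localrepresentationthm}: it is quoted in the preliminaries as a result of the earlier work~\cite{localrepresentation}, so there is no proof in this paper to compare against directly. That said, the logical ordering of the preliminaries here (Lemma~\ref{gamma}, then the Local Extension Theorem~\ref{localextension}, then the Local Representation Formula) mirrors the dependencies in~\cite{localrepresentation}, and your derivation---produce an extension triplet $(\widetilde f,N,\Lambda)$ via Theorem~\ref{localextension}, then apply the global Representation Formula~\ref{R-representationformula} to $\widetilde f$ on the symmetric slice domain $N$ and restrict back to $\Lambda$---is exactly the natural route and almost certainly the one taken there. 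The two observations you single out (that $\Lambda\subseteq N$ with $N$ symmetric forces $x+y\s\subset N$ whenever the sphere meets $\Lambda$, and the degenerate reading of $b(x,x),c(x,x)$ when $y=0$) are the only points needing comment, and you handle both.
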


In the previous statement, for $y=0$ we get well-defined $b(x,x), c(x,x)$ because, regardless of the choice of $J,K \in \s$ with $J \neq K$, it holds $(J-K)^{-1} \left[J f(x) - K f(x)\right]=f(x)$ and $(J-K)^{-1} \left[f(x) - f(x)\right] = 0$.

In~\cite{localrepresentation}, we also proved an Extension Theorem valid on a special class of slice domains.

\begin{definition}
A slice domain $\Omega$ is \emph{simple} if, for any choice of $J,K \in \s$, the set
\[\Omega_{J,K}^+:=\{x+yJ \in \Omega_{J}^+: x+yK \in \Omega_{K}^+\}\]
is connected.
\end{definition}

\begin{theorem}[Extension]\label{extension}
Let $f$ be a slice regular function on a simple slice domain $\Omega$. There exists a unique slice regular function $\widetilde f : \widetilde{\Omega} \to \hh$ that extends $f$ to the symmetric completion of its domain.
\end{theorem}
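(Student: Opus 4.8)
The plan is to assemble $\widetilde f$ from partial extensions furnished, one for each ordered pair of distinct imaginary units, by the Extension Formula (Theorem~\ref{extensionformulathm}); simplicity of $\Omega$ is precisely the hypothesis that makes that tool applicable, since it forces the relevant base sets to be connected. Uniqueness is immediate: if $\widetilde f_1,\widetilde f_2:\widetilde\Omega\to\hh$ both extend $f$, they agree on $\Omega\cap\rr=\widetilde\Omega\cap\rr$, which has accumulation points in $\widetilde\Omega_I$ for every $I\in\s$, so $\widetilde f_1=\widetilde f_2$ by the Identity Principle (Theorem~\ref{identity}) applied on the symmetric slice domain $\widetilde\Omega$. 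For existence, fix distinct $J,K\in\s$ and set
\[T_{J,K}:=\Omega_{J,K}^+\cup(\Omega\cap\rr)\cup V_{J,K},\]
where $V_{J,K}\subseteq L_J\setminus\overline{L_J^+}$ is a thin collar lying under $\Omega\cap\rr$, chosen inside $\Omega_J$. A routine point–set check shows $T_{J,K}$ is open, $T_{J,K}\cap\rr=\Omega\cap\rr\neq\emptyset$, $(T_{J,K})_J^+=\Omega_{J,K}^+$, $T_{J,K}\subseteq\Omega_J$, and $T_{J,K}$ is connected — this last point precisely because $\Omega_{J,K}^+$ is connected (simplicity) and accumulates at every point of $\Omega\cap\rr$. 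Then $U_{J,K}:=\{x+yK:x+yJ\in T_{J,K}\}\subseteq\Omega_K$, and $f_J|_{T_{J,K}}$, $f_K|_{U_{J,K}}$ agree with $f$ on $\rr$, so Theorem~\ref{extensionformulathm} produces a symmetric slice domain $\Sigma_{J,K}$ with $(\Sigma_{J,K})_J^+=\Omega_{J,K}^+$, $\Sigma_{J,K}\cap\rr=\Omega\cap\rr$, and a slice regular $f_{J,K}:\Sigma_{J,K}\to\hh$ coinciding with $f$ on $E_{J,K}:=\Omega_{J,K}^+\cup\Omega_{K,J}^+\cup(\Omega\cap\rr)$.

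Next I would verify two facts. First, $\widetilde\Omega=\bigcup_{J\neq K}\Sigma_{J,K}$: a non-real point $x+yI\in\widetilde\Omega$ lies on a sphere meeting $\Omega$ in a nonempty relatively open cap (openness of $\Omega$), so $x+yJ,x+yK\in\Omega$ for suitable distinct $J,K$, whence $x+yJ\in\Omega_{J,K}^+$ and $x+y\s\subseteq\Sigma_{J,K}$, while real points belong to every $\Sigma_{J,K}$. Second, all the $f_{J,K}$ coincide with $f$ on one common open symmetric neighborhood $W_0$ of $\Omega\cap\rr$: for $x_0\in\Omega\cap\rr$ pick $\rho>0$ with $B(x_0,\rho)\subseteq\Omega$; then $B(x_0,\rho)\cap L_J^+\subseteq\Omega_{J,K}^+$ for all distinct $J,K$, hence $B(x_0,\rho)\subseteq\Sigma_{J,K}$, and on the symmetric slice domain $B(x_0,\rho)$ the Identity Principle forces $f_{J,K}=f$; set $W_0:=\bigcup_{x_0\in\Omega\cap\rr}B(x_0,\rho(x_0))$. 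Also note, for later use, that $\bigcup_{J\neq K}E_{J,K}=\Omega$, since every non-real point of $\Omega$ lies in some $\Omega_{J,K}^+$ by the same cap argument.

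The crux is the gluing: I want $\widetilde f$ defined by $\widetilde f|_{\Sigma_{J,K}}:=f_{J,K}$ to be well defined, i.e. $f_{J,K}=f_{J',K'}$ on $\Sigma_{J,K}\cap\Sigma_{J',K'}$. On a connected component $C$ of this intersection both restrictions are slice regular; if $C$ meets $W_0$ then (each ball $B(x_0,\rho)$ being connected and contained in the intersection) $C$ contains a real point, $C$ is a symmetric slice domain, and $f_{J,K}=f_{J',K'}$ on $C$ by the Identity Principle, both equalling $f$ on the nonempty open set $C\cap\rr$. The step I expect to require the most care is excluding components $C$ disjoint from $\rr$: here one exploits the structure inherited from simplicity — away from $\rr$, $\Sigma_{J,K}$ is a union of spheres over the connected set $\Theta_{J,K}\subseteq\{y\ge 0\}$, the image of $\Omega_{J,K}^+\cup(\Omega\cap\rr)$ under $x+yJ\mapsto(x,y)$, which touches $\{y=0\}$; one must show that $\Theta_{J,K}\cap\Theta_{J',K'}$ has no connected component inside the open half–plane, so that every component of $\Sigma_{J,K}\cap\Sigma_{J',K'}$ reaches the real axis. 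Granting this, $\widetilde f:\widetilde\Omega\to\hh$ is well defined; it is slice regular because it is slice regular on each $\Sigma_{J,K}$ and these cover $\widetilde\Omega$; and it extends $f$ because $\widetilde f=f_{J,K}=f$ on each $E_{J,K}$ and $\bigcup_{J\neq K}E_{J,K}=\Omega$. Alternatively, one may phrase the whole argument through the Local Representation Formula (Theorem~\ref{localrepresentationthm}) and Local Extension (Theorem~\ref{localextension}), defining $\widetilde f(x+yI)=b(x,y)+Ic(x,y)$ sphere by sphere; the content then concentrates in showing $b,c$ depend only on $(x,y)$, which is exactly the same monodromy issue resolved above by simplicity.
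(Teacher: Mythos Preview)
The paper does not contain a proof of this theorem: it is quoted in Section~\ref{sec:preliminaries} as a preliminary result from~\cite{localrepresentation}, so there is no ``paper's own proof'' to compare against. I can only assess your argument on its own terms.

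Your outline is reasonable and correctly identifies that simplicity is exactly what makes the Extension Formula applicable to each pair $(J,K)$. Uniqueness, the construction of the $f_{J,K}$, the covering $\widetilde\Omega=\bigcup_{J\neq K}\Sigma_{J,K}$, the common neighborhood $W_0$ of the real axis, and the equality $\bigcup E_{J,K}=\Omega$ are all fine. However, the argument has a genuine gap at the gluing step, and you acknowledge it yourself with the phrase ``Granting this''. The claim you need is that $\Theta_{J,K}\cap\Theta_{J',K'}$ has no connected component contained in the open half-plane. This is the heart of the proof, and you have not established it; the closing sentence (``the same monodromy issue resolved above by simplicity'') is circular, since nothing was resolved.

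More concretely: simplicity says each $\Omega_{J,K}^+$ is connected, a pairwise condition. Your gluing requires control over $\Theta_{J,K}\cap\Theta_{J',K'}$, which in the base corresponds to the \emph{quadruple} condition $x+yJ,\,x+yK,\,x+yJ',\,x+yK'\in\Omega$. It is not hard to build three connected open subsets $A,B,C$ of the upper half-plane, each touching $\{y=0\}$, with all pairwise intersections connected, yet with $A\cap B\cap C$ possessing a component bounded away from $\{y=0\}$ (for instance, remove from the half-plane a vertical segment and two suitably placed horizontal rays). So the passage from the pairwise hypothesis to the connectivity you need is not automatic; you must either prove that simplicity obstructs such configurations for sets of the specific form $\Theta_{J,K}$, or restructure the gluing so that only pairwise information is used (for example, by comparing $f_{J,K}$ and $f_{J',K'}$ through an intermediate $f_{J,K'}$ and exploiting that on the full $J$-half-slice both agree with $f_J$, then invoking the Representation Formula on the symmetric slice domains $\Sigma_{J,\,\cdot}$ rather than the Identity Principle on their intersection). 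Either way, this is where the real work lies, and your proposal stops short of it.
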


In~\cite{localrepresentation}, we observed that the slice domain $\Omega$ of Example~\ref{ex:douren} is not simple. We also described, without giving a formal proof, a large class of examples of simple domains. We include a proof here. 

\begin{proposition}\label{prop:starlike}
If an open subset $\Omega$ of $\hh$ is starlike with respect to a point $x_0 \in \Omega \cap \rr$, then it is a simple slice domain.
\end{proposition}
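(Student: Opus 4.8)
The plan is to verify each piece of the definition of a simple slice domain in turn, using the starlike hypothesis as the source of connectedness throughout. First I would check that $\Omega$ is a slice domain: since $\Omega$ is starlike with respect to $x_0\in\Omega\cap\rr$, in particular $x_0\in\Omega_I$ for every $I\in\s$, so each $\Omega_I$ is nonempty and meets the real axis; and $\Omega_I=\Omega\cap L_I$ is open in $L_I$. For connectedness of $\Omega_I$, observe that if $z\in\Omega_I$ then the segment $[x_0,z]$ lies in $\Omega$ (by starlikeness) and also in $L_I$ (since both endpoints lie in $L_I\cong\cc$ and $L_I$ is a real-linear subspace), hence in $\Omega_I$; thus $\Omega_I$ is starlike with respect to $x_0$, in particular connected. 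So $\Omega$ is a slice domain.

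Next, the core of the argument: for any $J,K\in\s$ I must show that
\[\Omega_{J,K}^+=\{x+yJ\in\Omega_J^+ : x+yK\in\Omega_K^+\}\]
is connected. The natural idea is to exhibit $\Omega_{J,K}^+$ as a starlike-type set after transporting everything to a single copy of the upper half-plane. Concretely, identify the parameter space $P:=\{(x,y)\in\rr^2 : y>0\}$ and let $E:=\{(x,y)\in P : x+yJ\in\Omega \text{ and } x+yK\in\Omega\}$, so that $\Omega_{J,K}^+$ is the image of $E$ under the homeomorphism $(x,y)\mapsto x+yJ$; it suffices to prove $E$ is connected. Given $(x,y)\in E$, I would like to connect it inside $E$ to a point near the real axis and then slide along to a fixed base point. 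The key geometric observation is: for $t\in[0,1]$, the point $t x_0+(1-t)x + (1-t)yJ$ lies on the segment from $x+yJ$ to $x_0$, hence in $\Omega$; likewise $t x_0+(1-t)x+(1-t)yK\in\Omega$. So the path $t\mapsto\big(tx_0+(1-t)x,\ (1-t)y\big)$ in $\overline P$ stays in $\overline E$ for $t\in[0,1)$ and reaches the real-axis point $(x_0,0)$ at $t=1$. This shows every point of $E$ can be joined within $E$ to points arbitrarily close to $(x_0,0)$; since $\Omega$ is open, a whole neighbourhood of $x_0$ in $P$ lies in $E$, and that neighbourhood is connected, so all of $E$ is connected. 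I would phrase this cleanly as: $E\cup\{(x_0,0)\}$ is starlike with respect to $(x_0,0)$ in the closed half-plane, hence connected, hence so is its "interior part" $E$ together with the limit point; then $E$ itself is connected because removing the single boundary point $(x_0,0)$ from a starlike set that only touches it as a vertex leaves it connected (any two points of $E$ are joined through $(x_0,0)$, and each of the two half-open radial segments can be perturbed slightly into the open region near $x_0$).

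The main obstacle I anticipate is the last step — arguing that deleting the vertex $(x_0,0)$ from the starlike set $E\cup\{(x_0,0)\}$ does not disconnect $E$. This requires using that $\Omega$ is open around $x_0$, so that for small $\varepsilon$ the half-disk $\{(x,y)\in P : (x-x_0)^2+y^2<\varepsilon^2\}$ is entirely contained in $E$; then any radial segment from a point of $E$ toward $(x_0,0)$ can be cut off just before reaching the vertex and rerouted through this half-disk, so any two points of $E$ are connected by a path lying in $E\setminus\{(x_0,0)\}=E$. I would be careful to note that this is exactly the place where openness (not merely starlikeness) of $\Omega$ is used, and that $x_0$ being real is essential so that $L_J$ and $L_K$ both contain the relevant radial segments. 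Everything else is routine: the homeomorphism $(x,y)\mapsto x+yJ$ from $P$ onto $L_J^+$ carries $E$ onto $\Omega_{J,K}^+$, so connectedness of $E$ gives connectedness of $\Omega_{J,K}^+$, completing the proof that $\Omega$ is simple.
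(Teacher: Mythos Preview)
Your proof is correct and follows essentially the same route as the paper's: both arguments show that the closure of $\Omega_{J,K}^+$ in the closed half-plane is starlike with respect to $x_0$, then use an open half-disk around $x_0$ (coming from openness of $\Omega$) to reroute the two radial segments and obtain a path entirely inside $\Omega_{J,K}^+$. The only differences are cosmetic---you work in the parameter half-plane $P$ while the paper works directly in $\overline{L_J^+}$---and you additionally spell out why $\Omega$ is a slice domain, which the paper leaves implicit.
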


\begin{proof}
The slice $\Omega_J$ is starlike with respect to $x_0$ and so is its intersection $C_1$ with the closed half-plane $\overline{L_J^+} = L_J^+ \cup \rr$. Similarly, $\Omega_K \cap \overline{L_K^+}$ is starlike with respect to $x_0$ and so is its ``copy''
\[C_2:=\{x+yJ \in \overline{L_J^+} : x+yK \in \Omega_K \cap \overline{L_K^+}\}\]
within the closed half-plane $\overline{L_J^+}$. Thus, the intersection $C_1 \cap C_2$ is starlike with respect to $x_0$. Moreover, there exists a radius $R>0$ such that the open ball $B(x_0,R)$ is included in $\Omega$. It follows that the half-disk $B(x_0,R) \cap \overline{L_J^+}$ is included in $C_1 \cap C_2$, while the open half-disk $B(x_0,R) \cap L_J^+$ is included in
\[(C_1 \cap C_2) \setminus \rr = \Omega_J^+ \cap (C_2 \setminus \rr) = \Omega_{J,K}^+.\]
We can prove that $\Omega_{J,K}^+$ is path-connected, as follows. For every $p, p' \in \Omega_{J,K}^+$, the line segment between $p$ and $x_0$ and the one between $p'$ and $x_0$ are both included in $C_1 \cap C_2$. Since both line segments enter the open half-disk $B(x_0,R) \cap L_J^+$, it follows that there is a continuous path between $p$ and $p'$ within $\Omega_{J,K}^+$.
\end{proof}


\section{Real differential}\label{sec:differential}

In this section, we begin to study the real differential of a slice regular function on a slice domain $\Omega$. This study will continue in the forthcoming Section~\ref{sec:applications}.

Let $f:\Omega\to\hh$ be a slice regular function. Let us fix $p_0=x_0+y_0I_0 \in \Omega$ (with $x_0,y_0\in\rr, y_0>0$ and $I_0\in\s$) and apply Theorem~\ref{localrepresentationthm}. There exists a neighborhood $U$ of $p_0$ in $x_0+y_0\s$ such that the functions
\[b,c:(U \times U)\setminus\{ (u,u) : u \in U \setminus \rr\}\to\hh\]
are constant. For $p_0$ fixed, their constant values do not depend on the choice of $U$ and can be denoted as $b(p_0),c(p_0)$, respectively. Moreover, for all $p=x_0+y_0I\in U$, Theorem~\ref{localrepresentationthm} implies that
\[f(p)=b(p_0)+Ic(p_0)\]
We must have $b(p)=b(p_0)$ and $c(p)=c(p_0)$.

\begin{definition}\label{def:sphericalvalueandderivative}
Let $\Omega$ be a slice domain in $\hh$ and let $f:\Omega\to\hh$ be a slice regular function. The \emph{spherical value} of $f$ is the function $f^\circ_s:\Omega\to\hh$ mapping each $p_0\in\Omega$ to $b(p_0)$. The \emph{spherical derivative} of $f$ is the function $f'_s:\Omega\setminus\rr\to\hh$ mapping each $p_0=x_0+y_0I_0\in\Omega\setminus\rr$ (with $y_0>0$) to $y_0^{-1}c(p_0)$.
\end{definition}

If $\Omega$ is a symmetric slice domain, then the previous definition is consistent with~\cite[Definition 1.18]{librospringer} (which, in turn, derived from~\cite{perotti}).

The following properties hold by construction. We recall that a slice regular function $f:\Omega\to\hh$ is called \emph{slice preserving} if $f(\Omega_I)\subseteq L_I$ for all $I\in\s$.

\begin{remark}
Let $\Omega$ be a slice domain in $\hh$ and let $f:\Omega\to\hh$ be a slice regular function. By construction, the following properties hold:
\begin{enumerate}
\item $f^\circ_s,f'_s$ are real analytic functions;
\item for each sphere $x_0+y_0\s$ intersecting $\Omega\setminus\rr$, the functions $f^\circ_s,f'_s$ are locally constant in $(x_0+y_0\s)\cap\Omega$;
\item for all $q\in\Omega\setminus\rr$, it holds
\[f(q)=f^\circ_s(q)+\im(q)f'_s(q)\,,\]
while, for all $q\in\Omega\cap\rr$,  it holds $f(q)=f^\circ_s(q)$;
\item $f$ is slice preserving if, and only if, $f^\circ_s,f'_s$ are real-valued.
\end{enumerate}
\end{remark}

\begin{example}
For $p\in\hh\setminus\rr$, consider the binomial $B(q):=q-p$: then $B(\bar q)=\bar q-p$. Thus, in each $x+y\s$, it holds $B^\circ_s\equiv x-p$ and $B'_s\equiv1$.
\end{example}

The next example displays a spherical value and derivative which are locally constant but not constant in $(x_0+y_0\s)\cap\Omega$.

\begin{example}\label{ex:douren2}
Let $f:\Omega\to\hh$ be the slice regular function of Example~\ref{ex:douren}. The $2$-sphere $-1+2\s$ intersects $\Omega$ in two disjoint caps $C^+,C^-$: the former including the point $p:=-1+2I$ of the arc $\alpha_{\frac12,I}$ and the latter including $\bar p$. For all $J\in\s$, it holds:
\begin{align*}
f(-1+2J)&=f_{T(J)}(-1+2J)\\
&=(2I)^{-1} \left[I\phi_{T(J)}(p)+I\phi_{T(J)}(\bar p)\right]+J(2I)^{-1} \left[\phi_{T(J)}(p)-\phi_{T(J)}(\bar p)\right]\\
&=\frac12\left[\phi_{T(J)}(p)+\phi_{T(J)}(\bar p)\right]+\frac{JI}2\left[\phi_{T(J)}(\bar p)-\phi_{T(J)}(p)\right]\,,
\end{align*}
where $\phi_{T(J)}(p)=I\arg_{T(J)}(-1)$ equals $-\pi I$ for $T(J)<\frac12$ and $\pi I$ for $T(J)>\frac12$, while $\phi_{T(J)}(\bar p)=\phi_0(\bar p)=\ln\sqrt{17}+I\arg_0(-1-4I)$ irrespective of $J$. Thus, 
\begin{align*}
(f^\circ_s)_{|_{C^+}}&\equiv\frac12\left[\phi_0(\bar p)-\pi I\right]\,,\\
(f'_s)_{|_{C^+}}&\equiv\frac14\left[I\phi_0(\bar p)-\pi\right]\,,
\end{align*}
while
\begin{align*}
(f^\circ_s)_{|_{C^-}}&\equiv\frac12\left[\phi_0(\bar p)+\pi I\right]\,,\\
(f'_s)_{|_{C^-}}&\equiv\frac14\left[I\phi_0(\bar p)+\pi\right]\,.
\end{align*}
\end{example}

We are now ready to study the real differentials of slice regular functions on slice domains. For the case of symmetric slice domains, see~\cite[Remark 8.15]{librospringer},~\cite[Corollary 6.1]{gporientation}, and~\cite[Proposition 2.3]{conformality}. In the next statement, we use the notion of Cullen derivative $f'_c$, which we recalled in Section~\ref{sec:preliminaries}.

\begin{proposition}\label{prop:differential}
Let $\Omega$ be a slice domain in $\hh$ and let $f:\Omega\to\hh$ be a slice regular function. Pick $p\in\Omega$ and let $df_p$ denote the real differential of $f$ at $p$.
\begin{enumerate}
\item If $p\in\Omega\cap\rr$, then $df_p(v)=vf'_c(p)$ for all $v\in T_p\Omega\simeq\hh$. As a consequence, $df_p$ is singular if, and only if, $f'_c(p)=0$.
\item If $p\in\Omega\setminus\rr$, if $I\in\s$ is such that $p\in\Omega_I$ and if we split $T_p\Omega\simeq\hh$ as $L_I\oplus L_I^\perp$, then
\[df_p(v+w)=vf'_c(p)+wf'_s(p)\]
for all $v\in L_I$ and $w\in L_I^\perp$. As a consequence, $df_p$ is singular if, and only if, $f'_c(p)\overline{f'_s(p)}\in L_I^\perp$.
\end{enumerate}
\end{proposition}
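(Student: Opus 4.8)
The plan is to compute $df_p$ directly from the local description of $f$ provided by Definition~\ref{def:sphericalvalueandderivative} and the preceding Remark, treating the real and the real-orthogonal directions separately. Recall that near $p$ we have $f(q) = f^\circ_s(q) + \im(q)\,f'_s(q)$ on $\Omega\setminus\rr$, with $f^\circ_s, f'_s$ locally constant on spheres; this is the key structural input, together with the fact that the restriction $f_I$ to the slice $\Omega_I$ is holomorphic, so that the real differential of $f_I$ at any $p\in\Omega_I$ is multiplication (on the right) by the Cullen derivative $f'_c(p)$.

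For part~(1), when $p\in\Omega\cap\rr$, I would argue that slice regularity forces the real differential to be $L_I$-linear after identifying $L_I\simeq\cc$ for \emph{every} $I\in\s$ simultaneously: indeed $df_p$ restricted to $L_I$ is $v\mapsto vf'_c(p)$ for each $I$, and since the planes $L_I$ span $\hh$ and share the real axis, $df_p(v)=vf'_c(p)$ for all $v\in\hh$. (Alternatively, one invokes the extension triplet of Theorem~\ref{localextension}: $f$ agrees with a slice regular function $\widetilde f$ on a symmetric slice domain near the real point $p$, and for symmetric slice domains this formula is the classical one cited from~\cite{librospringer}.) The singularity statement is then immediate: right multiplication by a quaternion $a$ on $\hh$ is invertible precisely when $a\neq0$.

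For part~(2), fix $p=x_0+y_0I_0\in\Omega\setminus\rr$ and the unit $I=I_0$. Splitting $T_p\Omega\simeq\hh = L_I\oplus L_I^\perp$: the restriction $df_p|_{L_I}$ is $v\mapsto vf'_c(p)$ because $f_I$ is holomorphic on $\Omega_I$. For the orthogonal direction, I would use that moving $p$ within the sphere $x_0+y_0\s$ corresponds to varying $\im(q)$ while $f^\circ_s$ and $f'_s$ stay (locally) constant; differentiating $q\mapsto f^\circ_s + \im(q)f'_s$ in a direction $w\in L_I^\perp$ (which is tangent to the sphere at $p$, scaled appropriately) gives $df_p(w) = w f'_s(p)$ — here I must be a little careful about the identification of tangent vectors to the sphere with elements of $L_I^\perp$ and about the factor $y_0$ absorbed into the definition of $f'_s$, but this is a routine computation once the parametrization is chosen. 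Combining by linearity yields $df_p(v+w) = vf'_c(p) + wf'_s(p)$.

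Finally, for the singularity criterion in~(2): writing $df_p$ in the basis adapted to $L_I\oplus L_I^\perp$, the map sends $L_I\to\hh$ by $\cdot f'_c(p)$ and $L_I^\perp\to\hh$ by $\cdot f'_s(p)$. This is a real-linear endomorphism of $\hh\cong\rr^4$; I would compute its determinant (or rather decide its invertibility) by noting that $df_p$ is singular iff the image $L_I f'_c(p) + L_I^\perp f'_s(p)$ is a proper subspace, equivalently iff $L_I f'_c(p)$ and $L_I^\perp f'_s(p)$ overlap nontrivially, equivalently (after right-multiplying by $\overline{f'_s(p)}$, assuming $f'_s(p)\neq0$ — the case $f'_s(p)=0$ being handled separately and giving singularity) iff $L_I f'_c(p)\overline{f'_s(p)} \cap L_I^\perp|f'_s(p)|^2 \neq \{0\}$, i.e.\ $f'_c(p)\overline{f'_s(p)}\in L_I^\perp$, using that $L_I\cap L_I^\perp=\{0\}$ and a dimension count. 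The main obstacle is bookkeeping in this last step — keeping straight which side the multiplications act on and verifying the equivalence rigorously via the real-linear structure of $\hh$ — but no genuinely new idea is needed beyond the local representation already established.
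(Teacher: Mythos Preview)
Your proposal is correct and follows essentially the same approach as the paper: compute $df_p$ on $L_I$ via holomorphicity of $f_I$, on $L_I^\perp$ via the spherical representation $f(q)=f^\circ_s(q)+\im(q)f'_s(q)$ (with $f^\circ_s,f'_s$ locally constant along the sphere), and then decide invertibility by right-multiplying the image $L_If'_c(p)+L_I^\perp f'_s(p)$ by $\overline{f'_s(p)}$ to compare $L_If'_c(p)\overline{f'_s(p)}$ against $L_I^\perp$. The paper does not separate the case $f'_s(p)=0$ explicitly (the criterion $f'_c(p)\overline{f'_s(p)}\in L_I^\perp$ is trivially satisfied then, and the image is at most two-dimensional), but otherwise the arguments coincide.
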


\begin{proof}
By Corollary~\ref{cor:realanalyticity}, $f$ is real differentiable. Let us compute the real differential $df_p$ at $p\in\Omega$.
If $p=x+yI$ for some $x,y\in\rr$ and $I\in\s$, then the definition of slice regular function implies that $df_p(v)=vf'_c(p)$ for all $v\in L_I$. We separate two cases:
\begin{enumerate}
\item If $p\in\Omega\cap\rr$, it follows that $df_p(v)=vf'_c(p)$ for all $v\in T_p\Omega\simeq\hh$. Clearly, $df_p$ is singular if, and only if, $f'_c(p)=0$.
\item If $p\in\Omega\setminus\rr$, we first prove that $df_p(w)=wf'_s(p)$ for all $w\in L_I^\perp$. This follows from property {\it 3.} in the previous remark, if we take into account that $L_I^\perp$ is the tangent space to the $2$-sphere $x+y\s$ at $p$. Secondly, we observe that the image of $df_p$, which is $L_If'_c(p)+L_I^\perp f'_s(p)$, has dimension $4$ if, and only if, $L_If'_c(p)\overline{f'_s(p)}+L_I^\perp$ does, which is the same as $f'_c(p)\overline{f'_s(p)}\not\in L_I^\perp$.
\qedhere
\end{enumerate} 
\end{proof}

Although this is not strictly needed for the present work, we can generalize the notion of \emph{slice function} introduced in~\cite[Definition 5]{perotti} with the following definition and make the subsequent remarks.

\begin{definition}
Let $Y\subseteq\hh$. Then $f:Y\to\hh$ is a \emph{locally slice function} if there exist functions $b,c:Y\to\hh$ such that, for all $S:=x_0+y_0\s$ (with $x_0,y_0\in\rr$ and $y_0\geq0$), the following properties hold:
\begin{enumerate}
\item $b,c$ are constant in each connected component of $S\cap Y$;
\item for all $I\in\s$ such that $x_0+y_0I\in Y$, it holds
\[f(x_0+y_0I)=b(x_0+y_0I)+Ic(x_0+y_0I)\,.\]
\end{enumerate}
If this is the case, the \emph{spherical value} $f^\circ_s:Y\to\hh$ and \emph{spherical derivative} $f'_s:Y\setminus\rr\to\hh$ are defined by setting $f^\circ_s(x_0+y_0I):=b(x_0+y_0I)$ if $x_0+y_0I\in Y$ and $f'_s(x_0+y_0I):=y_0^{-1}c(x_0+y_0I)$ if $x_0+y_0I\in Y\setminus\rr$.
\end{definition}

If $f$ is locally slice then, by construction, $f(q)=f^\circ_s(q)+\im(q)f'_s(q)$ for all $q\in Y\setminus\rr$ and $f(q)=f^\circ_s(q)$ for all $q\in Y\cap\rr$. The function $f$ is \emph{slice preserving}, i.e., $f(Y_I)\subseteq L_I$ for all $I\in\s$, if, and only if $f^\circ_s,f'_s$ are real-valued.

Of course, all slice regular functions on slice domains are locally slice functions. It is possible to produce other examples of locally slice functions by considering linear combinations of slice regular functions that are defined on slice domains whose intersection $Y$ is not a slice domain. This is the case with the difference $D:=f_1-f_0$ between the two slice regular functions $f_0$ and $f_1$ defined in Example~\ref{ex:douren}, which is a slice function on the union between the interior of the solid torus $\widetilde{C}_1$ and the open set $\hh\setminus(\widetilde{h_I}\cup\widetilde{C}_1)$.


\section{Algebraic structure}\label{sec:algebraic}

In this section, we endow the set of slice regular functions on a slice domain $\Omega$ with a $^*$-algebra structure. We will follow the approach adopted in~\cite{perotti} over symmetric slice domains. This approach is distinct from the one of~\cite[\S 1.4]{librospringer} (which derived from~\cite{advancesrevised,zeros}).

\begin{definition}\label{def:operations}
Let $\Omega$ be a slice domain in $\hh$ and let $f,g:\Omega\to\hh$ be slice regular functions. We define the \emph{regular product} $f*g:\Omega\to\hh$ by setting
\[f*g(q):=f^\circ_s(q)\,g^\circ_s(q)+\im(q)^2 f'_s(q)\, g'_s(q) + \im(q)(f^\circ_s(q)\, g'_s(q)+ f'_s(q)\, g^\circ_s(q))\]
for all $q\in\Omega\setminus\rr$ and $f*g(q):=f^\circ_s(q)\,g^\circ_s(q)=f(q)g(q)$ for all $q\in\Omega\cap\rr$.

We define the \emph{regular conjugate} $f^c:\Omega\to\hh$ by setting
\[f^c(q):=\overline{f^\circ_s(q)}+\im(q) \overline{f'_s(q)}\]
for all $q\in\Omega\setminus\rr$ and $f^c(q):=\overline{f^\circ_s(q)}=\overline{f(q)}$ for all $q\in\Omega\cap\rr$.

The \emph{symmetrization} $f^s:\Omega\to\hh$ is defined as
\[f^s(q)=|f^\circ_s(q)|^2-|\im(q)f'_s(q)|^2+2\im(q)\re(f^\circ_s(q)\, \overline{f'_s(q)})\,,\]
for all $q\in\Omega\setminus\rr$ and as $f^s(q)=|f^\circ_s(q)|^2=|f(q)|^2$ for all $q\in\Omega\cap\rr$.
\end{definition}

If $\Omega$ happens to be a symmetric slice domain, then the previous definition is consistent with~\cite[Definitions 1.27, 1.34, 1.35]{librospringer} by~\cite[Remark 3.2]{gpsalgebra}. This is true, in particular, for quaternionic polynomials:

\begin{example}
For $p=x_0+y_0I\in\hh$, consider the binomial $B(q)=q-p$. It holds 
\begin{align*}
B^c(q)&=q-\bar p\,,\\
B^s(q)&=q^2-q2\re(p)+|p|^2=(q-x_0)^2+y_0^2\,.
\end{align*}
In each $x+y\s$ with $x,y\in\rr$ and $y>0$, it holds
\begin{align*}
&(B^c)^\circ_s\equiv x-\bar p=x-x_0+y_0I,\quad (B^c)'_s\equiv1\,,\\
&(B^s)^\circ_s\equiv|x-x_0|^2+y_0^2-y^2,\quad (B^s)'_s\equiv2(x-x_0)\,.
\end{align*}
\end{example}

The set of slice regular functions on a symmetric slice domain is a real $^*$-algebra with the operations $+,*,^c$ (see, for instance,~\cite[page 4741]{gpsalgebra}). We extend this property to all slice domains.

\begin{theorem}\label{thm:operations}
Let $\Omega$ be a slice domain in $\hh$. Then $+,*,^c$ are operations on the set of slice regular functions on $\Omega$ and turn it into a real $^*$-algebra. Moreover, if $f,g:\Omega\to\hh$ are slice regular functions, then the following properties hold:
\begin{enumerate}
\item if $f$ is slice preserving, then $f*g(q)=f(q)g(q)=g*f(q)$ in $\Omega$;
\item if $g$ is constantly equal to $c$, then $f*g(q)=f(q)c$ for all $q\in\Omega$;
\item $f^s$ is a slice preserving regular function and \[f^s=f*f^c=f^c*f\,.\]
\end{enumerate}
\end{theorem}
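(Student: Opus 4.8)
The plan is to reduce the statement on slice domains to the already-known algebraic structure over symmetric slice domains, using the Local Extension Theorem (Theorem~\ref{localextension}) together with the fact that the operations $+,*,^c$ are defined pointwise through the spherical value and spherical derivative, which are themselves locally determined. First I would verify that $f*g$, $f^c$ and $f^s$ are again slice regular. Fix $p_0\in\Omega$; by Theorem~\ref{localextension} there is an extension triplet $(\widetilde f,N,\Lambda)$ for $f$ and, shrinking $N$ and $\Lambda$ if necessary (intersecting the two symmetric slice domains and taking a slice-domain neighbourhood of $p_0$ inside the intersection via Lemma~\ref{gamma}), an extension triplet $(\widetilde g,N,\Lambda)$ for $g$ on the \emph{same} $N$ and $\Lambda$. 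On the symmetric slice domain $N$ the functions $\widetilde f,\widetilde g$ have a classical regular product $\widetilde f*\widetilde g$, regular conjugate $\widetilde f^{\,c}$, and symmetrization $\widetilde f^{\,s}$, all slice regular on $N$ by the symmetric theory (and by \cite[Remark 3.2]{gpsalgebra} these agree with the spherical-value/derivative formulas of Definition~\ref{def:operations}). The key observation is that the spherical value and spherical derivative are local invariants: on the slice domain $\Lambda$ we have $f^\circ_s=\widetilde f^{\,\circ}_s$ and $f'_s=\widetilde f'_s$ (and likewise for $g$), because $f$ and $\widetilde f$ coincide on $\Lambda$ and the spherical value/derivative at a point are computed from the values of the function on a neighbourhood of that point within its sphere — a neighbourhood that can be taken inside $\Lambda$. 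Hence the pointwise formulas defining $f*g$, $f^c$, $f^s$ on $\Lambda$ agree with $\widetilde f*\widetilde g$, $\widetilde f^{\,c}$, $\widetilde f^{\,s}$ restricted to $\Lambda$, which are slice regular; since $p_0$ was arbitrary, $f*g$, $f^c$, $f^s$ are slice regular on all of $\Omega$.

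Next I would check the $^*$-algebra axioms: $+$ is visibly a vector-space operation; bilinearity of $*$ over $\rr$ is immediate from the bilinear (in the spherical data) formula in Definition~\ref{def:operations}; associativity of $*$, the fact that the constant function $1$ is a two-sided identity, and the identities $(f*g)^c=g^c*f^c$ and $(f^c)^c=f$ all reduce, via the local extension argument above, to the corresponding identities over symmetric slice domains, which hold by \cite[page 4741]{gpsalgebra}. (Alternatively these could be verified directly from the spherical-data formulas by a short computation, but routing through the symmetric case avoids redoing the algebra.) Because two slice regular functions on a slice domain that agree on a sphere-neighbourhood of every point agree everywhere — indeed they agree on a set with an accumulation point in some $\Omega_I$, so the Identity Principle (Theorem~\ref{identity}) applies — it suffices to check each identity locally, which is exactly what the extension triplets provide.

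For the three numbered properties: \emph{(1)} if $f$ is slice preserving then $f^\circ_s,f'_s$ are real-valued, so in the formula for $f*g$ all coefficients commute with the quaternions $g^\circ_s(q),g'_s(q),\im(q)$; regrouping gives $f*g(q)=(f^\circ_s(q)+\im(q)f'_s(q))(g^\circ_s(q)+\im(q)g'_s(q))=f(q)g(q)$ for $q\in\Omega\setminus\rr$, and the same commutativity gives $g*f(q)=f(q)g(q)$, while on $\Omega\cap\rr$ this is definitional. \emph{(2)} is the special case where $g\equiv c$: then $g^\circ_s\equiv c$, $g'_s\equiv 0$, and the formula collapses to $f*g(q)=f^\circ_s(q)c+\im(q)f'_s(q)c=f(q)c$. \emph{(3)} Expanding $f*f^c$ using $(f^c)^\circ_s=\overline{f^\circ_s}$ and $(f^c)'_s=\overline{f'_s}$ (which follow from Definition~\ref{def:operations} and property~3 of the spherical decomposition remark) and $\im(q)^2=-|\im(q)|^2$ gives exactly the displayed formula for $f^s$; the same holds for $f^c*f$. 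That $f^s$ is slice preserving amounts to checking its spherical value $|f^\circ_s(q)|^2-|\im(q)f'_s(q)|^2$ and spherical derivative $2\re(f^\circ_s(q)\overline{f'_s(q)})$ are real, which is clear. I would also note $(f^c)^\circ_s=\overline{f^\circ_s}$, $(f^c)'_s=\overline{f'_s}$ more carefully: these come from the fact that the local extension commutes with $^c$ (it does so over the symmetric $N$, and restriction to $\Lambda$ plus the Identity Principle transports it to $\Omega$).

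The main obstacle is the bookkeeping in the first paragraph: one must be careful that the \emph{same} pair $(N,\Lambda)$ works for both $f$ and $g$ — this is where intersecting two symmetric slice domains (whose intersection need not be connected, hence one restricts to the connected symmetric component containing a real neighbourhood of $p_0$) and applying Lemma~\ref{gamma} to produce a common slice-domain neighbourhood $\Lambda\subseteq\Omega\cap N$ of $p_0$ is essential — and that the spherical value and derivative genuinely are computed from arbitrarily small sphere-neighbourhoods, so that the equalities $f^\circ_s=\widetilde f^{\,\circ}_s$, $f'_s=\widetilde f'_s$ hold on $\Lambda$ rather than merely on $\Lambda\cap\rr$. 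Once this is set up cleanly, everything else is either a direct rewriting of Definition~\ref{def:operations} or an appeal to the symmetric case plus the Identity Principle.
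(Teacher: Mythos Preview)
Your approach is essentially the same as the paper's: reduce to the symmetric case via the Local Extension Theorem, identify the spherical value/derivative locally with those of the extensions, and then verify properties (1)--(3) by direct inspection of Definition~\ref{def:operations}. The one point where the paper is slightly slicker is the bookkeeping you flag as ``the main obstacle'': rather than extending $f$ and $g$ separately on $\Omega$ and then intersecting (where it is not immediate that the component of $N_f\cap N_g$ containing $p_0$ is a slice domain), the paper first takes an extension triplet $(\widetilde f,N,\Lambda)$ for $f$ and then applies the Local Extension Theorem to $g_{|_{\Lambda}}$, obtaining $(\widetilde g,N',\Lambda')$ with $\Lambda'\subseteq\Lambda\subseteq N$ automatically, so the connected component $M$ of $N\cap N'$ containing the slice domain $\Lambda'$ is a symmetric slice domain for free.
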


\begin{proof}
Let $f,g:\Omega\to\hh$ be slice regular functions. By direct inspection in Definition~\ref{sliceregular}, pointwise addition (i.e., setting $(f+g)(q):=f(q)+g(q)$ for all $q\in\Omega$) produces a slice regular function $f+g:\Omega\to\hh$. Now let us look at the regular conjugate $f^c$ and product $f*g$.

We apply Theorem~\ref{localextension} to find, for every point $p \in \Omega$, an extension triplet $\big(\widetilde f,N,\Lambda\big)$ for $f$ with $p\in\Lambda$. The regular conjugate of $\widetilde f$ is a slice regular function $h:N\to\hh$ with
\[h(q)=\overline{\widetilde f^\circ_s(q)}+\im(q) \overline{\widetilde f'_s(q)}\]
for all $q\in N\setminus\rr$ and $h(q)=\overline{\widetilde f^\circ_s(q)}=\overline{\widetilde f(q)}$ for all $q\in N\cap\rr$. Since $\widetilde f^\circ_s(q)=f^\circ_s(q)$ for all $q\in\Lambda$ and $\widetilde f'_s(q)=f'_s(q)$ for all $q\in\Lambda\setminus\rr$, we conclude that $h(q)=f^c(q)$ for all $q\in\Lambda$. Since $p$ was arbitrarily chosen in $\Omega$, it follows that $f^c$ is a slice regular function on $\Omega$.

Now let us apply Theorem~\ref{localextension} to find an extension triplet $\big(\widetilde g,N',\Lambda'\big)$ for the slice regular function $g_{|_{\Lambda}}:\Lambda\to\hh$, with $p\in\Lambda'$. The intersection $N\cap N'$ is a symmetric open set with
\[N\cap N'\supseteq \Lambda \cap N'\supseteq \Lambda'\ni p\,.\]
Since $\Lambda'$ is a slice domain, the connected component of $N\cap N'$ that includes $\Lambda'$ is a symmetric slice domain $M$ that includes $p$. The product $\widetilde f_{|_{M}}*\widetilde g_{|_{M}}$ is a slice regular function $h:M\to\hh$ with
\[h(q)=\widetilde f^\circ_s(q)\,\widetilde g^\circ_s(q)+\im(q)^2 \widetilde f'_s(q)\, \widetilde g'_s(q) + \im(q)(\widetilde f^\circ_s(q)\, \widetilde g'_s(q)+ \widetilde f'_s(q)\, \widetilde g^\circ_s(q))\]
for all $q\in M\setminus\rr$ and $h(q)=\widetilde f^\circ_s(q)\,\widetilde g^\circ_s(q)$ for all $q\in M\cap\rr$. Since $\widetilde f^\circ_s(q)=f^\circ_s(q),\widetilde g^\circ_s(q)=g^\circ_s(q)$ for all $q\in\Lambda'$ and $\widetilde f'_s(q)=f'_s(q),\widetilde g'_s(q)=g'_s(q)$ for all $q\in\Lambda'\setminus\rr$, we conclude that $h(q)=f*g(q)$ for all $q\in\Lambda'$. Again, it follows that $f*g$ is a slice regular function on $\Omega$.

Now let us prove that the set of slice regular functions on $\Omega$ is a real algebra. It clearly is a real vector space. We are left with proving that regular multiplication $(f,g)\mapsto f*g$ is bilinear and that it is associative. Both properties can be derived either from the definition or from the corresponding properties of slice regular functions over symmetric slice domains (using the technique applied in first part of the proof). Moreover, the set of slice regular functions on $\Omega$ is a real algebra: the definition immediately implies that $(f^c)^c=f$ for all $f$ and $f^c=f$ for $f\equiv a\in\rr$; the equality $(f*g)^c=g^c*f^c$ can be derived either from the definition or from the corresponding property of slice regular functions over symmetric slice domains.

Now let us prove the last statement. Properties {\it 1.} and {\it 2.} are proven by direct inspection in Definition~\ref{def:operations}. The equality in property {\it 3.} follows by direct computation from Definition~\ref{def:operations} and yields that $f^s$ is slice regular. By direct inspection in Definition~\ref{def:operations}, the spherical value and derivative of $f^s$ are real-valued, whence $f^s$ is a slice preserving function.
\end{proof}

As a byproduct of the previous proof, we can prove what follows.

\begin{proposition}\label{prop:productformula}
Let $\Omega$ be a slice domain in $\hh$, let $f:\Omega\to\hh$ be a slice regular function and pick $p=x+yI\in\Omega$. If $f(p)=0$, then $f*g(p)=0$. Otherwise, the point $f(p)^{-1}pf(p)$ belongs to $x+y\s$ and
\[f*g(p)=f(p)\,\widetilde g(f(p)^{-1}pf(p))\]
where $\widetilde g(q)=g^\circ_s(p)+\im(q)g'_s(p)$ for all $q\in x+y\s$ (whence $\widetilde g$ coincides with $g$ in the connected component of $(x+y\s)\cap\Omega$ that includes $p$). 
\end{proposition}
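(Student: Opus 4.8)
The plan is to reduce the whole statement to one algebraic identity for the regular product and then conjugate by $f(p)$. First I would record the reformulation
\[f*g(q)=f(q)\,g^\circ_s(q)+\im(q)\,f(q)\,g'_s(q)\qquad\text{for all }q\in\Omega\setminus\rr\,,\]
obtained from Definition~\ref{def:operations} by inserting $f(q)=f^\circ_s(q)+\im(q)f'_s(q)$ into the right-hand side, expanding, and using that $\im(q)^2$ is a \emph{real} number, hence commutes with every quaternion: the four resulting summands are precisely the four summands defining $f*g(q)$. For $q\in\Omega\cap\rr$ one has $f*g(q)=f(q)g(q)$ directly from the definition.

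Granting this, the case $f(p)=0$ is immediate: if $p\in\Omega\setminus\rr$ the displayed identity gives $f*g(p)=0\cdot g^\circ_s(p)+\im(p)\cdot 0\cdot g'_s(p)=0$, while if $p\in\Omega\cap\rr$ then $f*g(p)=f(p)g(p)=0$. Assume now $f(p)\neq 0$ and write $p=x+yI$ with $x,y\in\rr$, $y\geq 0$. Conjugation by the nonzero quaternion $f(p)$ preserves both the real part (because $\re(ab)=\re(ba)$) and the modulus of a quaternion; hence $w:=f(p)^{-1}pf(p)$ satisfies $\re(w)=\re(p)=x$ and $|\im(w)|=|\im(p)|=y$, so $w\in x+y\s$, and moreover $\im(w)=w-x=f(p)^{-1}(p-x)f(p)=f(p)^{-1}\im(p)f(p)$.

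It remains to compute $f(p)\,\widetilde g(w)$. If $p\notin\rr$, then
\[f(p)\,\widetilde g(w)=f(p)\big(g^\circ_s(p)+\im(w)\,g'_s(p)\big)=f(p)\,g^\circ_s(p)+\big(f(p)f(p)^{-1}\im(p)f(p)\big)\,g'_s(p)=f(p)\,g^\circ_s(p)+\im(p)\,f(p)\,g'_s(p)\,,\]
which equals $f*g(p)$ by the reformulation; if $p\in\rr$ then $y=0$, $w=p$, $\widetilde g(w)=g^\circ_s(p)=g(p)$, and the claimed identity is again $f*g(p)=f(p)g(p)$. Finally, for every $q$ in the connected component of $(x+y\s)\cap\Omega$ containing $p$, properties {\it 2.} and {\it 3.} of the Remark following Definition~\ref{def:sphericalvalueandderivative} yield $g^\circ_s(q)=g^\circ_s(p)$ and $g'_s(q)=g'_s(p)$, whence $g(q)=g^\circ_s(q)+\im(q)g'_s(q)=\widetilde g(q)$, proving the parenthetical claim. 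The only delicate point throughout is the noncommutativity of $\hh$: this is exactly why the reformulation keeps $f(q)$ on the \emph{left} and why the reality of $\im(q)^2$ is what makes the expansion collapse to the four wanted terms. (Alternatively the result could be deduced from the symmetric case, via extension triplets of $f$ and $g$ on a common symmetric slice domain around $p$ as in the proof of Theorem~\ref{thm:operations}; the direct argument above is, however, self-contained.)
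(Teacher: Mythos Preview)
Your proof is correct and self-contained. The approach, however, is genuinely different from the paper's. The paper treats Proposition~\ref{prop:productformula} as a byproduct of the proof of Theorem~\ref{thm:operations}: it picks extension triplets $(\widetilde f,N,\Lambda)$ and $(\widetilde g,N',\Lambda')$ for $f$ and $g$ with $p\in\Lambda'\subseteq M\subseteq N\cap N'$, observes that $f*g$ agrees with $\widetilde f_{|_M}*\widetilde g_{|_M}$ near $p$, and then quotes the known product formula for slice regular functions on the symmetric slice domain $M$ (namely \cite[Theorem~3.4]{librospringer}) to conclude. In that setup $\widetilde g$ is the bona fide extension of $g$ to $M$, and the formula $\widetilde g(q)=g^\circ_s(p)+\im(q)g'_s(p)$ on $x+y\s$ comes for free from the Representation Formula.

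Your argument instead bypasses the extension machinery entirely: you expand Definition~\ref{def:operations} to the identity $f*g(q)=f(q)\,g^\circ_s(q)+\im(q)\,f(q)\,g'_s(q)$ and then conjugate by $f(p)$. This is shorter and purely algebraic, and it makes transparent exactly where noncommutativity enters (the reality of $\im(q)^2$ and the left placement of $f(q)$). The paper's route, on the other hand, is consistent with its global strategy of reducing every local statement to the symmetric case, and it yields as a side effect that $\widetilde g$ is the restriction to $x+y\s$ of an actual slice regular extension of $g$, not merely an affine map on the sphere; your parenthetical remark at the end acknowledges this alternative.
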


\begin{proof}
In the previous proof, we had found an extension triplet $\big(\widetilde f,N,\Lambda\big)$ for $f$ with $p\in\Lambda$ and an extension triplet $\big(\widetilde g,N',\Lambda'\big)$ for $g_{|_{\Lambda}}:\Lambda\to\hh$ with $p\in\Lambda'$. We had also found a symmetric slice domain $M$ with $p\in\Lambda'\subseteq M\subseteq N\cap N'$, such that the functions $f*g$ and $\widetilde f_{|_{M}}*\widetilde g_{|_{M}}$ coincided in $\Lambda'$. In particular, $f*g(p)=\widetilde f_{|_{M}}*\widetilde g_{|_{M}}(p)$.

Let us apply~\cite[Theorem 3.4]{librospringer} to $\widetilde f_{|_{M}},\widetilde g_{|_{M}}$: if $\widetilde f(p)=0$, then $\widetilde f_{|_{M}}*\widetilde g_{|_{M}}(p)=0$; otherwise,
\[\widetilde f_{|_{M}}*\widetilde g_{|_{M}}(p)=\widetilde f(p)\, \widetilde g\left(\widetilde f(p)^{-1}p\widetilde f(p)\right)\]
Remembering that $\widetilde f(p)=f(p)$, we derive the first two equalities appearing in our statement. Moreover, by construction, for all $q\in x+y\s$ it holds $\widetilde g(q)=g^\circ_s(p)+\im(q)g'_s(p)$. The latter expression coincides with $g(q)$ for all $q$ in the connected component of $(x+y\s)\cap\Omega$ that includes $p$.
\end{proof}

We conclude this section with the following remarks, though not essential for the present work. If we modify Definition~\ref{def:operations} assuming $f,g$ to be locally slice functions on $Y\subseteq\hh$, we get well-defined locally slice functions $f*g,f^c,f^s:Y\to\hh$, which we may call \emph{slice product} of $f$ and $g$, \emph{slice conjugate} of $f$ and \emph{normal function} of $f$. This generalizes~\cite[Definitions 9 and 11]{perotti}. Analogs of Theorem~\ref{thm:operations} and Proposition~\ref{prop:productformula} hold.


\section{Zero sets}\label{sec:zerosets}

This section studies the zero sets
\[Z(f):=\{q\in\Omega\,:\,f(q)=0\}\]
of slice regular functions $f$ on slice domains $\Omega$. For the case of symmetric slice domains, see~\cite[Chapter 3]{librospringer} (which, in turn, derived from~\cite{advancesrevised,zeros,zerosopen,advances}). If $\Omega$ is symmetric, then $Z(f)$ consists of isolated points or isolated $2$-spheres of type $x+y\s$ (unless it is the whole domain). Moreover, each $2$-sphere of type $x+y\s$ cannot include several isolated zeros. In the case of slice domains, the situation is more manifold. We will see theoretical results here and explicit examples in the forthcoming Section~\ref{sec:factorization}.

\begin{proposition}\label{prop:zeros}
Let $\Omega$ be a slice domain in $\hh$, let $f:\Omega\to\hh$ be a slice regular function and consider its zero set $Z(f)$. Fix any $2$-sphere $S=x+y\s$ (with $x,y\in\rr,y\neq0$). For $p\in S$, the equality $f(p)=0$ is equivalent to 
\[f^\circ_s(p)=-\im(p)f'_s(p)\,.\]
If the previous equality holds and if $C$ denotes the connected component of $\Omega\cap S$ that includes $p$, then either
\begin{enumerate}
\item $f'_s(p)\neq0$ and $Z(f)\cap C=\{p\}$; or
\item $f'_s(p)=0$ and $C\subseteq Z(f)$.
\end{enumerate}
The former possibility is excluded if $f$ is slice preserving.
\end{proposition}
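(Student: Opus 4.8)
The plan is to reduce everything to two facts recorded in the Remark preceding the statement: the pointwise identity $f(q)=f^\circ_s(q)+\im(q)f'_s(q)$ for $q\in\Omega\setminus\rr$, and the local constancy of $f^\circ_s,f'_s$ along each sphere $x_0+y_0\s$. Since $y\neq0$ forces every point of $S$ out of $\rr$, the identity immediately gives that $f(p)=0$ is equivalent to $f^\circ_s(p)=-\im(p)f'_s(p)$; this settles the first assertion.

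For the dichotomy, I would fix $p\in S$ with $f(p)=0$ and let $C$ be the connected component of $\Omega\cap S$ through $p$. Local constancy of $f^\circ_s$ and $f'_s$ on $\Omega\cap S$, together with connectedness of $C$, yields that $f^\circ_s$ and $f'_s$ are genuinely constant on $C$; write $b$ and $c$ for their values. Then $f(q)=b+\im(q)c$ on $C$, and $f(p)=0$ reads $b=-\im(p)c$. If $c=f'_s(p)=0$, then $b=0$ as well, so $f\equiv0$ on $C$ and $C\subseteq Z(f)$. If $c=f'_s(p)\neq0$, then for $q\in C$ the equation $f(q)=0$ rewrites as $\im(q)c=-b$, i.e.\ $\im(q)=-bc^{-1}=\im(p)$; since $q\mapsto\im(q)$ is injective on $S$ (there $q=x+\im(q)$), this forces $q=p$, so $Z(f)\cap C=\{p\}$.

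For the last sentence, I would invoke property 4 of the Remark: if $f$ is slice preserving, then $b=f^\circ_s(p)$ and $c=f'_s(p)$ are real. In case 1 this would give $\im(p)=-bc^{-1}\in\rr$, which is impossible since $\im(p)$ is a nonzero purely imaginary quaternion (as $p\in S$ and $y\neq0$). Hence case 1 cannot occur for slice preserving $f$.

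I do not expect a genuine obstacle here: the whole argument is bookkeeping with the spherical value and derivative. The only step needing a word of care is the passage from ``locally constant on $\Omega\cap S$'' to ``constant on $C$'', which is the elementary fact that a locally constant function on a connected space is constant.
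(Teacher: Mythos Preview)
Your proof is correct and follows essentially the same approach as the paper's: both use the constancy of $f^\circ_s,f'_s$ on the connected component $C$ to write $f(q)=f^\circ_s(p)+\im(q)f'_s(p)$ there, then read off the dichotomy from $f(q)=(\im(q)-\im(p))f'_s(p)$. The treatment of the slice-preserving case is also the same in substance, differing only in presentation (the paper argues directly that a real $f^\circ_s(p)$ equal to the purely imaginary $-\im(p)f'_s(p)$ forces both to vanish, while you phrase it as $\im(p)=-bc^{-1}\in\rr$ being impossible).
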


\begin{proof}
Since $f^\circ_s,f'_s$ are locally constant in $S\cap\Omega$, they are constant in $C$. Thus, for all $q\in C$,
\[f(q)=f^\circ_s(q)+\im(q)f'_s(q)=f^\circ_s(p)+\im(q)f'_s(p)\,.\]
Moreover, $0=f(p)=f^\circ_s(p)+\im(p)f'_s(p)$ is equivalent to $f^\circ_s(p)=-\im(p)f'_s(p)$. In such a case,
\[f(q)=(\im(q)-\im(p))f'_s(p)\]
for all $q\in C$. If $f'_s(p)=0$, then $f$ vanishes identically in $C$. Otherwise, $p$ is the only zero of $f$ in $C$.
Now let us address the case of a slice preserving $f$: since $f^\circ_s,f'_s$ are real-valued, the equality $f^\circ_s(p)=-\im(p)f'_s(p)$ implies $f^\circ_s(p)=0=f'_s(p)$.
\end{proof}

\begin{theorem}\label{thm:zeros}
Let $\Omega$ be a slice domain in $\hh$ and let $f:\Omega\to\hh$ be a slice regular function. Either $f\equiv0$ or, for any $S=x+y\s$ intersecting $\Omega$ and for any connected component $C$ of $S\cap\Omega$, the set $Z(f)\setminus C$ has no accumulation points in $C$.
\end{theorem}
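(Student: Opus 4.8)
The plan is to reduce the statement to two regimes on the component $C$, governed by Proposition~\ref{prop:zeros}, and in each regime to exploit the real analyticity of $f$ (Corollary~\ref{cor:realanalyticity}) together with the Local Representation Formula (Theorem~\ref{localrepresentationthm}). First assume $f \not\equiv 0$ and suppose, for contradiction, that some point $p \in C$ is an accumulation point of $Z(f) \setminus C$. Since $f^\circ_s, f'_s$ are locally constant on $S \cap \Omega$, they are constant on $C$; call these constant values $b$ and $c$, so that $f(q) = b + \im(q)c$ for all $q \in C$. I would split according to whether $f$ vanishes identically on $C$ or not.

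\emph{Case 1: $C \subseteq Z(f)$.} Here I want to show that accumulation of zeros on $C$ forces $f$ to vanish on a neighborhood of $p$ in $\Omega$, contradicting $f \not\equiv 0$ via the Identity Principle (Theorem~\ref{identity}). By Theorem~\ref{localextension}, fix an extension triplet $(\widetilde f, N, \Lambda)$ for $f$ with $p \in \Lambda$, and shrink $\Lambda$ using Theorem~\ref{localrepresentationthm} so that $b, c$ are constant on $U := (x+y\s) \cap \Lambda$ and the local representation formula holds there. The hypothesis $C \subseteq Z(f)$ together with Proposition~\ref{prop:zeros} gives $b = -\im(p)c$ and $c = 0$, hence $b = 0$; then the local representation formula shows $f \equiv 0$ on all of $\Lambda$, in particular on a nonempty open subset of some slice $\Omega_I$, so $f \equiv 0$ on $\Omega$ by Theorem~\ref{identity} — contradiction. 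In this case the accumulation hypothesis is not even needed, but stating it uniformly does no harm.

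\emph{Case 2: $Z(f) \cap C = \{p_0\}$ for a single point $p_0$ (possibly $p_0 = p$), the ``isolated zero'' alternative of Proposition~\ref{prop:zeros}.} This is the heart of the matter. Take a sequence $q_n \in Z(f) \setminus C$ with $q_n \to p$. Apply Theorem~\ref{localextension} at $p$ to get an extension triplet $(\widetilde f, N, \Lambda)$ with $p \in \Lambda$; since $q_n \to p \in \Lambda$ and $\Lambda$ is open, $q_n \in \Lambda$ for large $n$, and on $\Lambda$ we have $f = \widetilde f$. Thus $\widetilde f$, a slice regular function on the \emph{symmetric} slice domain $N$, has a zero sequence $q_n$ accumulating at $p$. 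Now invoke the classical structure of zero sets over symmetric slice domains (quoted in the discussion opening Section~\ref{sec:zerosets}, from \cite[Chapter 3]{librospringer}): unless $\widetilde f \equiv 0$, the set $Z(\widetilde f)$ is a union of isolated points and isolated $2$-spheres $x'+y'\s$, and crucially no such $2$-sphere contains more than one zero from any fixed slice. If $\widetilde f \equiv 0$ we again contradict $f \not\equiv 0$. Otherwise, the accumulation at $p$ forces infinitely many $q_n$ to lie on the same $2$-sphere through $p$, which must therefore be $S = x+y\s$; but then $q_n \in S \cap \Omega$, and since $Z(\widetilde f) \cap S$ meets each slice $L_I$ at most once while each $q_n$ lies on $S$ together with $p$... here I must be careful: the $q_n$ lie on $S$ but in components of $S \cap \Omega$ other than $C$. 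The contradiction instead comes from $N$ being symmetric: $S \cap N$ is the full sphere $S$ (as $S$ meets $N$), it is connected, and $\widetilde f$ restricted to $S$ is a locally slice function on a \emph{connected} set, so by Proposition~\ref{prop:zeros} applied to $\widetilde f$ on $N$ either $\widetilde f$ vanishes on all of $S$ or has exactly one zero there; the first possibility, combined with $f = \widetilde f$ on $\Lambda$ and $\Lambda \cap S \neq \emptyset$ lying in $C$, forces $C \subseteq Z(f)$, contradicting Case 2's hypothesis, while the second contradicts having infinitely many distinct zeros $q_n$ on $S$.

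The main obstacle I anticipate is precisely the bookkeeping in Case 2: one must pass from the non-symmetric $\Omega$ to the symmetric $N$ via the extension triplet \emph{without} assuming that $f$ and $\widetilde f$ agree beyond $\Lambda$ (they need not, as Example~\ref{ex:douren} shows), so every zero of $f$ used in the argument must be relocated inside $\Lambda$ first — which is legitimate only for the zeros $q_n$ accumulating at $p \in \Lambda$, not for zeros elsewhere on $S$. Getting the logical structure right so that the classical symmetric-case dichotomy is applied only to $\widetilde f$ on $N$, and then transported back to $f$ on the single component $C$, is the delicate point; the rest is a routine combination of the Identity Principle, Proposition~\ref{prop:zeros}, and Theorem~\ref{localextension}.
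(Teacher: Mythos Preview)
Your Case~1 contains a genuine error. From $b=c=0$ you conclude that ``the local representation formula shows $f\equiv0$ on all of $\Lambda$'', but the Local Representation Formula (Theorem~\ref{localrepresentationthm}) only determines the values of $f$ on the single sphere $(x+y\s)\cap\Lambda$; it says nothing about $f$ on nearby spheres, hence nothing about the full $4$-dimensional neighborhood $\Lambda$. Your own remark that ``the accumulation hypothesis is not even needed'' in Case~1 exposes the gap: if vanishing on a cap $C$ forced global vanishing, then for instance $f(q)=(q-x_0)^2+y_0^2$, which vanishes identically on $x_0+y_0\s$, would have to be the zero function. So Case~1, as written, proves nothing.

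The repair is to run the very argument you set up in Case~2, which in fact works uniformly and makes the case split unnecessary. Take $q_n\in Z(f)\setminus C$ with $q_n\to p$, pass to an extension triplet $(\widetilde f,N,\Lambda)$ with $p\in\Lambda$, and note that eventually $q_n\in\Lambda$, so $\widetilde f(q_n)=0$. Now observe directly that eventually $q_n\notin S$: since $q_n\in\Lambda\subseteq\Omega$, any $q_n\in S$ would lie in $S\cap\Omega$, and because $C$ is open in $S$ and $q_n\to p\in C$, such $q_n$ would eventually lie in $C$, contradicting $q_n\in Z(f)\setminus C$. Thus $p$ is an accumulation point of $Z(\widetilde f)\setminus S$ in the symmetric slice domain $N$, and the classical result (\cite[Theorem~3.12]{librospringer}) gives $\widetilde f\equiv0$, whence $f\equiv0$ by the Identity Principle. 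This is exactly the paper's proof; your Case~2 reaches the same contradiction via a more roundabout dichotomy on $\widetilde f_{|S}$, but once you notice $q_n\notin S$ that detour is unnecessary.
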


\begin{proof}
Let us suppose $C$ to include an accumulation point $p$ of the set $Z(f)\setminus C$: in other words, there is a sequence 
$\{p_n\}_{n\in\nn}\subseteq Z(f)\setminus C$ converging to $p$. Let us prove that $f\equiv0$.

By Theorem~\ref{localextension}, $f$ has an extension triplet $\big(\widetilde f,N,\Lambda\big)$ with $p\in\Lambda$. Up to refinements, it holds $\{p_n\}_{n\in\nn}\subseteq\Lambda$, whence $\widetilde f(p_n)=f(p_n)=0$ for all $n\in\nn$. Up to a further refinement, it also holds $p_n\not\in S$: if such a refinement were not possible, then the limit point $p$ would belong to $(S\cap\Omega)\setminus C$, a contradiction. Thus, $p$ is an accumulation point for $Z(\widetilde f)\setminus S$. By~\cite[Theorem 3.12]{librospringer}, $\widetilde f\equiv0$.

It follows that $f_{|_\Lambda}\equiv0$. By the Identity Principle~\ref{identity}, the function $f$ vanishes identically in $\Omega$.
\end{proof}

Overall, if $f\not\equiv0$, then $Z(f)$ consists of isolated points or whole connected components of $(x+y\s)\cap\Omega$, each isolated from the rest of $Z(f)$. More precisely:

\begin{corollary}\label{cor:zeros}
Let $\Omega$ be a slice domain in $\hh$ and let $f:\Omega\to\hh$ be a slice regular function with $f\not\equiv0$. Suppose $p\in Z(f)$, let $S=x+y\s$ include $p$ and let $C$ be connected component of $S\cap\Omega$ that includes $p$. If either $p\in\rr$ or $f'_s(p)\neq0$, then $p$ is an isolated point in $Z(f)$. If, instead, $p\not\in\rr$ and $f'_s(p)=0$, then $Z(f)$ includes $C$ and $Z(f)\setminus C$ has no accumulation points in $C$.
\end{corollary}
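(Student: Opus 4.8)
The plan is to derive Corollary~\ref{cor:zeros} directly from Proposition~\ref{prop:zeros}, Theorem~\ref{thm:zeros}, and Proposition~\ref{prop:differential}, treating the two cases separately. First I would dispose of the case $p\in\rr$: here $f'_s$ is not even defined, but Proposition~\ref{prop:differential}(1) tells us that $df_p(v)=vf'_c(p)$, and if $f'_c(p)\neq0$ then $f$ is a local diffeomorphism near $p$, so $p$ is isolated in $Z(f)$. If instead $f'_c(p)=0$, I would invoke the Local Extension Theorem~\ref{localextension} to pass to an extension triplet $(\widetilde f,N,\Lambda)$ with $p\in\Lambda$; since $\widetilde f(p)=f(p)=0$ and $\widetilde f'_c(p)=f'_c(p)=0$ on the symmetric slice domain $N$, the classical theory (e.g.~\cite[Chapter 3]{librospringer}) forces $p$ to be a zero of some order $\geq2$ along $\rr$, but in any case a real zero of a not-identically-zero slice regular function on a symmetric slice domain is isolated; pulling back via $f_{|_\Lambda}=\widetilde f_{|_\Lambda}$ and using the Identity Principle~\ref{identity} gives that $p$ is isolated in $Z(f)$ (a cleaner route: a real zero that is not isolated would, by Theorem~\ref{thm:zeros} applied with $S$ any sphere through nearby points, or directly by real-analyticity and the classical result, force $f\equiv0$). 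Either way, the real case reduces to known symmetric-domain facts via the extension triplet.

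Next I would handle $p\notin\rr$ with $f'_s(p)\neq0$. Here Proposition~\ref{prop:zeros} already gives that $Z(f)\cap C=\{p\}$, so $p$ is the only zero of $f$ on the connected component $C$ of $S\cap\Omega$. To upgrade ``isolated in $C$'' to ``isolated in $\Omega$'', I would observe that if $p$ were an accumulation point of $Z(f)$, then since it is isolated within $Z(f)\cap C=\{p\}$, there would be a sequence $\{p_n\}\subseteq Z(f)\setminus C$ converging to $p$; but then Theorem~\ref{thm:zeros} forces $f\equiv0$, contrary to hypothesis. Hence $p$ is isolated in $Z(f)$.

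Finally, the case $p\notin\rr$ and $f'_s(p)=0$: Proposition~\ref{prop:zeros}(2) immediately gives $C\subseteq Z(f)$, and Theorem~\ref{thm:zeros} gives directly that $Z(f)\setminus C$ has no accumulation points in $C$. So this case requires essentially no new argument beyond quoting the two earlier results.

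The only mild subtlety, and the place I would be most careful, is the real case $f'_c(p)=0$: one must make sure the reduction to symmetric slice domains is clean. The cleanest phrasing avoids classifying the order of the zero altogether — instead, note that $f$ is real analytic by Corollary~\ref{cor:realanalyticity}, so if $p\in\rr$ is a non-isolated zero, the restriction $f_{I}$ to any slice $L_I$ through $p$ vanishes on a set with an accumulation point in $\Omega_I$ (this uses that $\rr\subseteq\Omega_I$ and that $Z(f)$ near $p$ must meet some slice in an infinite set — which follows because $Z(f)$ near a real non-isolated zero, being real-analytic of dimension $\geq1$, meets arbitrarily small spheres, hence meets some slice infinitely often), and then the Identity Principle~\ref{identity} forces $f\equiv0$. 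Alternatively, and perhaps most transparently, one simply cites that over a symmetric slice domain a non-identically-zero slice regular function has isolated real zeros, and transports this through the extension triplet exactly as in the proof of Theorem~\ref{thm:zeros}. I expect no genuine obstacle; the corollary is a bookkeeping consequence of the three preceding results.
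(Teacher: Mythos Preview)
Your handling of the two non-real cases is correct and matches the paper exactly: Proposition~\ref{prop:zeros} pins down $Z(f)\cap C$, then Theorem~\ref{thm:zeros} rules out accumulation from $Z(f)\setminus C$.

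For $p\in\rr$ you work much harder than the paper does. The paper simply notes that when $p=x\in\rr$ the sphere is $S=x+0\,\s=\{p\}$, hence $C=\{p\}$ and $Z(f)\cap C=\{p\}$ holds trivially; Theorem~\ref{thm:zeros} then says $Z(f)\setminus\{p\}$ has no accumulation point in $\{p\}$, i.e.\ $p$ is isolated. Thus the real case folds into the same two-line argument as the case $f'_s(p)\neq0$, with no need for Proposition~\ref{prop:differential}, extension triplets, or a case split on $f'_c(p)$.

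Your extension-triplet route for the real case is valid but redundant --- it re-derives what Theorem~\ref{thm:zeros} already packages. Your other sketch, arguing from real-analyticity that a non-isolated real zero forces $Z(f)$ to meet some slice in infinitely many points, is not justified as written: the zero set of a generic real-analytic map $\rr^4\to\rr^4$ could be a one-dimensional arc meeting each slice $L_I$ only finitely often near $p$, so the pigeonhole step does not go through without invoking the slice-regular structure --- which is precisely what the extension-triplet argument (equivalently, Theorem~\ref{thm:zeros}) supplies.
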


\begin{proof}
If either $p\in\rr$ or $f'_s(p)\neq0$, then $Z(f)\cap C=\{p\}$ by Proposition~\ref{prop:zeros}. It follows that $Z(f)\setminus C=Z(f)\setminus\{p\}$. If, instead, $p\not\in\rr$ and $f'_s(p)=0$, then Proposition~\ref{prop:zeros} yields that $Z(f)\supseteq C$. By Theorem~\ref{thm:zeros}, in all cases, $Z(f)\setminus C$ has no accumulation points in $C$. The thesis follows.
\end{proof}

One novelty with respect to the symmetric case is, that $(x+y\s)\cap\Omega$ may include several zeros of $f$ without being entirely included in $Z(f)$. Explicit examples will be provided in Section~\ref{sec:factorization}.

Let us study the zero set of a regular product $f*g$ and find that it consists of zeros of $f$, plus points that depend on $g$ but are not necessarily zeros of $g$.

\begin{proposition}\label{prop:zerosofproduct}
Let $\Omega$ be a slice domain in $\hh$ and let $f,g:\Omega\to\hh$ be slice regular functions. Then
\[Z(f)\subseteq Z(f*g)\,.\]
Moreover,
\[Z(f*g)\setminus Z(f)=\{p\in\Omega : g^\circ_s(p)=-f(p)^{-1}\im(p)f(p)g'_s(p)\}\,.\]
\end{proposition}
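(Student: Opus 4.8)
The plan is to prove both containments by reducing to the pointwise product formula established in Proposition~\ref{prop:productformula}. First I would handle the inclusion $Z(f)\subseteq Z(f*g)$: for $p\in Z(f)$, i.e. $f(p)=0$, Proposition~\ref{prop:productformula} directly gives $f*g(p)=0$, so $p\in Z(f*g)$. This is immediate and requires no work.

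For the harder identity, fix $p=x+yI\in\Omega$ with $f(p)\neq0$; I want to show $f*g(p)=0$ if and only if $g^\circ_s(p)=-f(p)^{-1}\im(p)f(p)\,g'_s(p)$. By Proposition~\ref{prop:productformula}, since $f(p)\neq0$, we have $f*g(p)=f(p)\,\widetilde g\big(f(p)^{-1}pf(p)\big)$ where $\widetilde g(q)=g^\circ_s(p)+\im(q)g'_s(p)$ on $x+y\s$. Since $f(p)$ is invertible, $f*g(p)=0$ is equivalent to $\widetilde g\big(f(p)^{-1}pf(p)\big)=0$. Writing $q_0:=f(p)^{-1}pf(p)\in x+y\s$, note $\im(q_0)=f(p)^{-1}\im(p)f(p)$ (conjugation fixes the real part and conjugates the imaginary part), so the equation $\widetilde g(q_0)=0$ reads $g^\circ_s(p)+f(p)^{-1}\im(p)f(p)\,g'_s(p)=0$, which rearranges to exactly $g^\circ_s(p)=-f(p)^{-1}\im(p)f(p)\,g'_s(p)$. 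Combining the two directions: for $p\notin Z(f)$, $p\in Z(f*g)$ iff this relation holds; and every $p\in Z(f)$ lies in $Z(f*g)$; hence $Z(f*g)\setminus Z(f)$ is precisely the set of $p$ with $f(p)\neq0$ satisfying the stated relation, and one checks the right-hand set in the statement automatically excludes points of $Z(f)$ since there $f(p)^{-1}$ is undefined (or one simply restricts attention to $\Omega\setminus Z(f)$, which is the intended reading).

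The only mild subtlety — and the step I would be most careful about — is the bookkeeping around the conjugated point $q_0=f(p)^{-1}pf(p)$: one must verify it genuinely lies on the sphere $x+y\s$ (this is part of the statement of Proposition~\ref{prop:productformula}) and that $\im(q_0)=f(p)^{-1}\im(p)f(p)$, so that substituting into $\widetilde g$ produces the claimed coefficient. There is also the trivial degenerate case $p\in\rr$: then $\im(p)=0$, $f*g(p)=f(p)g(p)$, and the condition $g^\circ_s(p)=0$ is just $g(p)=0$, consistent with the formula. I would note this briefly. Everything else is a direct translation of Proposition~\ref{prop:productformula}, so no real obstacle arises.
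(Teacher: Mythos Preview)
Your proof is correct and follows essentially the same route as the paper: both invoke Proposition~\ref{prop:productformula} to get $Z(f)\subseteq Z(f*g)$ immediately, and for $p\notin Z(f)$ both reduce $f*g(p)=0$ to $\widetilde g\big(f(p)^{-1}pf(p)\big)=0$ and then use $\im\big(f(p)^{-1}pf(p)\big)=f(p)^{-1}\im(p)f(p)$ to obtain the stated condition on $g^\circ_s(p)$. Your additional remarks on the real case and on the implicit exclusion of $Z(f)$ from the right-hand set are sound but not needed beyond what the paper does.
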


\begin{proof}
Proposition~\ref{prop:productformula} immediately yields the first statement $Z(f)\subseteq Z(f*g)$. Moreover, it implies that a point $p\not\in Z(f)$ belongs to $Z(f*g)$ if, and only if, $f(p)^{-1}pf(p)$ is a zero of a function $\widetilde g$ such that $\widetilde g(q)=g^\circ_s(p)+\im(q)g'_s(p)$ for all $q\in x+y\s$. We now remark that
\[\im\left(f(p)^{-1}pf(p)\right)=f(p)^{-1}\im(p)f(p)\,,\]
whence our second statement follows.
\end{proof}

We will see in Section~\ref{sec:factorization} that $Z(f*g)\setminus Z(f)$ may be non-empty even when $Z(g)$ is empty. Definition~\ref{def:operations} allows the following remark about $Z(f^c)$.

\begin{remark}\label{rmk:zerosofconjugate}
Let $\Omega$ be a slice domain in $\hh$, let $f:\Omega\to\hh$ be a slice regular function and let $p\in\Omega$. Then $p\in Z(f^c)$ if, and only if,
\[f^\circ_s(p)=f'_s(p)\im(p)\,.\]
\end{remark}

Let us now turn to the study of $Z(f^s)$.

\begin{proposition}\label{prop:zerosofsymmetrization1}
Let $\Omega$ be a slice domain in $\hh$ and let $f:\Omega\to\hh$ be a slice regular function. The zero sets $Z(f)$ and $Z(f^s)$ are related, as follows:
\begin{enumerate}
\item Fix any $2$-sphere $S=x+y\s$ (with $x,y\in\rr,y\neq0$). If $p\in Z(f)$ and if $C$ denotes the connected component of $\Omega\cap S$ that includes $p$, then $C\subseteq Z(f^s)$.
\item The equality $Z(f)\cap\rr=Z(f^s)\cap\rr$ holds. Thus, if $f^s\equiv0$, then $f\equiv0$.
\end{enumerate}
\end{proposition}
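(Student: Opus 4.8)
The plan is to exploit the local structure established earlier—specifically, the extension triplets from Theorem~\ref{localextension} together with the known symmetric-case result, say~\cite[Proposition 3.6 or 3.7]{librospringer}, relating $Z(f)$ and $Z(f^s)$ for slice regular functions on symmetric slice domains. The idea is that $f^s$ is slice preserving (Theorem~\ref{thm:operations}, part~3), so its spherical value and derivative are real-valued, and hence, by Proposition~\ref{prop:zeros}, on any sphere $S=x+y\s$ with $y\neq0$ the zero set of $f^s$ meets a connected component $C$ of $S\cap\Omega$ either in all of $C$ or nowhere. This dichotomy is the key rigidity we will use for part~1.

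For part~1, fix $p\in Z(f)$ lying on $S=x+y\s$ with $y\neq0$, and let $C$ be the connected component of $\Omega\cap S$ containing $p$. Since $f^s=f*f^c$ by Theorem~\ref{thm:operations}, Proposition~\ref{prop:zerosofproduct} gives $Z(f)\subseteq Z(f*f^c)=Z(f^s)$, so $p\in Z(f^s)$. Now apply the dichotomy just described: $f^s$ being slice preserving, Proposition~\ref{prop:zeros} says that either $(f^s)'_s(p)\neq0$ and $Z(f^s)\cap C=\{p\}$, or $(f^s)'_s(p)=0$ and $C\subseteq Z(f^s)$—but the first alternative is explicitly excluded for slice preserving functions in the final sentence of Proposition~\ref{prop:zeros}. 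Hence $C\subseteq Z(f^s)$, as claimed.

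For part~2, the inclusion $Z(f)\cap\rr\subseteq Z(f^s)\cap\rr$ is immediate from $Z(f)\subseteq Z(f^s)$. For the reverse inclusion, fix $p\in\rr$ with $f^s(p)=0$; by Definition~\ref{def:operations}, $f^s(p)=|f(p)|^2$, so $f(p)=0$, giving $p\in Z(f)\cap\rr$. This proves the equality. The consequence follows: if $f^s\equiv0$, then in particular $f^s$ vanishes on $\Omega\cap\rr$ (which is nonempty since $\Omega$ is a slice domain), so $f$ vanishes on $\Omega\cap\rr$, and since $\Omega\cap\rr$ has accumulation points in $\Omega\cap\rr$, the Identity Principle (Theorem~\ref{identity}) forces $f\equiv0$.

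I do not expect a serious obstacle here: everything reduces to already-established facts (Theorem~\ref{thm:operations}, Proposition~\ref{prop:zeros}, Proposition~\ref{prop:zerosofproduct}, the Identity Principle). The only point requiring a little care is making sure the slice-preserving dichotomy of Proposition~\ref{prop:zeros} is invoked correctly—namely that the ``isolated zero'' alternative is genuinely impossible for $f^s$, so that a single zero of $f^s$ on $C$ propagates to all of $C$. An alternative route for part~1, if one prefers to stay closer to the symmetric theory, is to pass through an extension triplet $(\widetilde f,N,\Lambda)$ for $f$ near $p$, note that $\widetilde f^{\,s}$ extends $f^s$ near $p$, and invoke the symmetric-case statement that zeros of $f^s$ come in whole spheres; but the direct argument via Proposition~\ref{prop:zeros} is cleaner and self-contained.
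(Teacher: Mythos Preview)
Your proposal is correct and follows essentially the same approach as the paper's own proof: both parts use $f^s=f*f^c$ together with Proposition~\ref{prop:zerosofproduct} to get $Z(f)\subseteq Z(f^s)$, then invoke the slice-preserving case of Proposition~\ref{prop:zeros} for part~1 and the identity $f^s(q)=|f(q)|^2$ on $\rr$ plus the Identity Principle for part~2. Your discussion of the alternative extension-triplet route is extra commentary, but the argument you actually give matches the paper.
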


\begin{proof}
\begin{enumerate}
\item Since $f^s=f*f^c$, Proposition~\ref{prop:zerosofproduct} implies that $Z(f)\subseteq Z(f^s)$. Since $f^s$ is slice preserving, by Proposition~\ref{prop:zeros}, if $p\in Z(f^s)$ and if $C$ denotes the connected component of $\Omega\cap S$ that includes $p$, then $C\subseteq Z(f^s)$.
\item If $q\in\Omega\cap\rr$, then $f^s(q)=|f(q)|^2$ vanishes if, and only if, $f(q)=0$. Now, if $f^s\equiv0$, then $\Omega\cap\rr\subseteq Z(f)$. By the Identity Principle~\ref{identity}, it follows that $f\equiv0$.\qedhere
\end{enumerate}
\end{proof}

For future reference, we make the following remark.

\begin{remark}
Let $\Omega$ be a slice domain in $\hh$. If $f:\Omega\to\hh$ is a slice regular function with $f\not\equiv0$, then we can combine several results in this section to conclude that $Z(f^s)$ consists of: isolated real points; or whole connected components of $(x+y\s)\cap\Omega$, each isolated from the rest of $Z(f^s)$. As a consequence, $\Omega\setminus Z(f^s)$ is a slice domain.
\end{remark}

Although not strictly needed for the present work, we can make the following remarks. Proposition~\ref{prop:zeros}, Proposition~\ref{prop:zerosofproduct} and Remark~\ref{rmk:zerosofconjugate} still hold true if we relax the assumptions on $f,g$, assuming them to be locally slice functions on an arbitrary subset $Y$ of $\hh$. Under the same assumptions, property {\it 1.} in Proposition~\ref{prop:zerosofsymmetrization1} holds true, while property {\it 2.} is reduced to the equality $Z(f)\cap\rr=Z(f^s)\cap\rr$. Indeed, it may well happen that $f^s\equiv0$ while $f\not\equiv0$:

\begin{example}\label{ex:zerodivisor}
Consider the slice regular functions $f_0$ and $f_1$ defined in Example~\ref{ex:douren} and their difference $D:=f_1-f_0$ on the interior of the solid torus $\widetilde{C}_1$. For $x,y\in\rr$ with $y\geq0$ and $J\in\s$ such that $x+Jy$ belongs to the interior of $\widetilde{C}_1$, it holds $D(x+Jy)=\pi(I+J)=\pi I+yJ\frac{\pi}{y}$. Thus, $D^c(x+Jy)=\pi I-yJ\frac{\pi}{y}$ and
\[D^s(x+Jy)=\pi^2-y^2\frac{\pi^2}{y^2}+2yJ\re\left(-\frac{\pi^2}{y}I\right)\equiv0\,.\]
\end{example}


\section{Quotients}\label{sec:quotients}

This section studies division between slice regular functions on a slice domain $\Omega$. We begin by defining the regular reciprocal of such a function $f:\Omega\to\hh$. If $\Omega$ happens to be a symmetric slice domain, then the new definition we give is consistent with~\cite[Definition 5.1]{librospringer} by~\cite[Theorem 4.4]{gpsdivisionalgebras}.

\begin{definition}\label{def:reciprocal}
For all $a,b\in\hh$ with $|a|\neq|b|$ or $\re(a\bar b)\neq0$, set
\[\Phi(a,b) := \frac{|a|^2\bar a+\bar ba\bar b}{(|a|^2-|b|^2)^2+(2\re(a\bar b))^2}\,.\]
Let $\Omega$ be a slice domain in $\hh$. If $f:\Omega\to\hh$ is a slice regular function with $f\not\equiv0$, consider the slice domain $\Omega':=\Omega\setminus Z(f^s)$. The \emph{regular reciprocal} $f^{-*}:\Omega'\to\hh$ of $f$ is defined as follows. 
\[f^{-*}(q):=\Phi\big(f^\circ_s(q),|\im(q)|\,f'_s(q)\big)-\ \frac{\im(q)}{|\im(q)|}\ \Phi\big(|\im(q)|\,f'_s(q),f^\circ_s(q)\big)\]
for all $q\in\Omega'\setminus\rr$ and $f^{-*}(q):=f^\circ_s(q)^{-1}=f(q)^{-1}$ for all $q\in\Omega'\cap\rr$.
\end{definition}

The last displayed formula is well defined if, and only if, $|f^\circ_s(q)|\neq|\im(q)f'_s(q)|$ or $\re(f^\circ_s(q)\, f'_s(q)^c)\neq0$, which is equivalent to $f^s(q)\neq0$ by Definition~\ref{def:operations}. This is the reason why the domain of $f^{-*}$ is $\Omega':=\Omega\setminus Z(f^s)$. We can now prove the following result.

\begin{proposition}\label{prop:quotient}
Let $\Omega$ be a slice domain in $\hh$ and let $f:\Omega\to\hh$ be a slice regular function such that $f\not\equiv0$. Then $f^{-*}$ is a slice regular function on $\Omega'=\Omega\setminus Z(f^s)$ and the following equalities hold in $\Omega'$: 
\begin{align*}
f^{-*}(q)&=(f^s)^{-*}*f^c(q)=f^s(q)^{-1}f^c(q)\,,\\
f*f^{-*}&=f^{-*}*f\equiv1\,.
\end{align*}
Moreover, if $f$ is slice preserving, then $f^{-*}$ is slice preserving and $f^{-*}(q)=f(q)^{-1}$ for all $q\in\Omega'$.
\end{proposition}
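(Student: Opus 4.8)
The plan is to reduce everything to the already-established symmetric case via the Local Extension Theorem~\ref{localextension}, exactly as in the proofs of Theorem~\ref{thm:operations} and Proposition~\ref{prop:productformula}. Fix $f:\Omega\to\hh$ slice regular with $f\not\equiv0$ and set $\Omega'=\Omega\setminus Z(f^s)$, which is a slice domain by the Remark following Proposition~\ref{prop:zerosofsymmetrization1}. First I would check that the defining formula in Definition~\ref{def:reciprocal} is indeed well posed on all of $\Omega'$: the discussion immediately preceding the statement shows that the denominator appearing inside $\Phi$ is nothing but $f^s(q)$, which is nonzero precisely on $\Omega'$. It then remains to prove slice regularity, the two displayed chains of equalities, and the slice-preserving refinement.

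For slice regularity and the identities, I would argue locally. Given $p\in\Omega'$, apply Theorem~\ref{localextension} to obtain an extension triplet $(\widetilde f, N, \Lambda)$ for $f$ with $p\in\Lambda$; shrinking $N$ to the connected component of $N\setminus Z(\widetilde f^{\,s})$ containing $p$ we may assume $\widetilde f$ has no symmetrized zeros on $N$, so that the classical regular reciprocal $\widetilde f^{\,-*}:N\to\hh$ of~\cite[Definition 5.1]{librospringer} is defined and slice regular, and satisfies $\widetilde f*\widetilde f^{\,-*}=\widetilde f^{\,-*}*\widetilde f\equiv 1$ together with $\widetilde f^{\,-*}=(\widetilde f^{\,s})^{-1}\widetilde f^{\,c}$ by~\cite[Theorem 4.4]{gpsdivisionalgebras} (cited as the consistency reference in this section). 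Since on $\Lambda$ the spherical value and derivative of $\widetilde f$ agree with those of $f$, the explicit formula for $\widetilde f^{\,-*}$ coincides termwise with the formula defining $f^{-*}$; hence $f^{-*}$ coincides with the slice regular function $\widetilde f^{\,-*}$ on a neighborhood of $p$. As $p$ was arbitrary, $f^{-*}$ is slice regular on $\Omega'$. The identities $f^{-*}=(f^s)^{-*}*f^c=(f^s)^{-1}f^c$ and $f*f^{-*}=f^{-*}*f\equiv 1$ then follow: each is an identity between slice regular functions on $\Omega'$ that holds on the neighborhood of $p$ because it holds for $\widetilde f$ by the symmetric theory, so by the Identity Principle~\ref{identity} — or simply because $p$ is arbitrary — it holds on all of $\Omega'$. (One should note that $(f^s)^{-*}$ makes sense because $f^s$ is a slice preserving slice regular function with $Z(f^s)=Z((f^s)^s{}^{1/2})$ in the relevant sense, and $\Omega\setminus Z((f^s)^s)=\Omega\setminus Z(f^s)=\Omega'$.)

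For the last assertion, suppose $f$ is slice preserving. Then $f^\circ_s, f'_s$ are real-valued, so $f^s=|f^\circ_s|^2+\ldots$ is real-valued with the same sign structure, and in the formula for $f^{-*}$ one has $\Phi(a,b)=\bar a/(a^2+b^2)$ for real $a,b$ with the second argument real-valued; thus $f^{-*}(q)=f^\circ_s(q)/f^s(q)-\im(q) f'_s(q)/f^s(q)$ with real coefficients, i.e. $f^{-*}$ is locally slice with real-valued spherical value and derivative, hence slice preserving. The pointwise identity $f^{-*}(q)=f(q)^{-1}$ then follows from $f*f^{-*}\equiv 1$ together with property~{\it 1.} of Theorem~\ref{thm:operations}, which says that for slice preserving $f$ the regular product is the pointwise product: $1=f*f^{-*}(q)=f(q)f^{-*}(q)$, so $f^{-*}(q)=f(q)^{-1}$ (note $f(q)\neq 0$ on $\Omega'$ since $|f(q)|^2$ divides $f^s(q)\neq 0$ when $f$ is slice preserving).

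The main obstacle I anticipate is purely bookkeeping: making sure the local neighborhood $N$ produced by Theorem~\ref{localextension} can be taken to avoid $Z(\widetilde f^{\,s})$ so that the symmetric regular reciprocal is available there, and verifying that the shrunken $N$ is still a symmetric slice domain containing $p$ — this uses that $Z(\widetilde f^{\,s})$ is a union of isolated real points and isolated symmetric $2$-spheres, so its complement is again a symmetric slice domain, and one passes to the connected component containing $p$. Everything else is either a direct term-by-term comparison of explicit formulas or an immediate transfer of a known identity from the symmetric setting via Theorem~\ref{localextension} and the Identity Principle.
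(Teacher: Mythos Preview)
Your proposal is correct and follows essentially the same route as the paper: reduce to the symmetric case via an extension triplet, identify $f^{-*}$ locally with $\widetilde f^{\,-*}$ through the explicit $\Phi$-formula, and transfer the known identities. The only cosmetic differences are that the paper works on $N':=N\setminus Z(\widetilde f^{\,s})$ rather than shrinking $N$, verifies $f*f^{-*}\equiv1$ by checking it on $\Omega'\cap\rr$ and invoking the Identity Principle (rather than transferring the identity directly from $\widetilde f$), and in the slice-preserving case derives $f^{-*}(q)=f(q)^{-1}$ first and reads off slice preservation from that, whereas you argue slice preservation from the formula and then deduce the pointwise inverse; your parenthetical about $(f^s)^{-*}$ being defined on $\Omega'$ is right in substance (since $f^s$ is slice preserving, $(f^s)^s=(f^s)^2$ so $Z((f^s)^s)=Z(f^s)$), though the notation there is a bit garbled.
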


\begin{proof}
Let us fix $p\in\Omega'$. By Theorem~\ref{localextension}, $f$ has an extension triplet $\big(\widetilde f,N,\Lambda\big)$ with $p\in\Lambda$.

If we set $N':=N\setminus Z\big(\widetilde f^s\big)$ and if we define $g:N'\to\hh$ by setting $g(q):=\widetilde f^s(q)^{-1}\widetilde f^c(q)$, then $g$ is the left and right multiplicative inverse of $\widetilde f_{|_{N'}}$ by the theory presented in~\cite[\S 5.1]{librospringer}. Moreover, by~\cite[Theorem 4.4]{gpsdivisionalgebras} it holds 
\[g(q) = \Phi\left(\widetilde f^\circ_s(q),|\im(q)|\,\widetilde f'_s(q)\right)-\ \frac{\im(q)}{|\im(q)|}\ \Phi\left(|\im(q)|\,\widetilde f'_s(q),\widetilde f^\circ_s(q)\right)\]
in $N'\setminus\rr$ and $g(q)=\widetilde f^\circ_s(q)^{-1}=\widetilde f(q)^{-1}$ for all $q\in N'\cap\rr$.
Now, as a consequence of the last equalities, in 
\[\Lambda\setminus Z\big(\widetilde f^s\big)=\Lambda\setminus Z(f^s)=\Lambda\cap\Omega'\]
it holds
\[f^{-*}(q)=g(q)=\widetilde f^s(q)^{-1}\widetilde f^c(q)=f^s(q)^{-1}f^c(q)\,.\]

Because $p$ was arbitrarily chosen in $\Omega'$, it follows that $f^{-*}:\Omega'\to\hh$ is a slice regular function fulfilling the equality $f^{-*}(q)=f^s(q)^{-1}f^c(q)$ for all $q\in\Omega'$. Moreover, for all $q\in\Omega'\cap\rr$,
\begin{align*}
&f^{-*}(q) =f^s(q)^{-1}f^c(q)=(f^c(q)f(q))^{-1}f^c(q)=f(q)^{-1}\\
&f^{-*}*f(q)=f(q)^{-1}f(q)=1\\
&f*f^{-*}(q)=f(q)f(q)^{-1}=1\,.
\end{align*}
Since $\Omega'$ is a slice domain, by the Identity Principle~\ref{identity} it follows that
\[f^{-*}*f=f*f^{-*}\equiv1\]
 in $\Omega'$. In the special case when $f$ is slice preserving, the equality $1=f*f^{-*}(q)=f(q)f^{-*}(q)$ valid for all $q\in\Omega'$ implies that $f^{-*}(q)=f(q)^{-1}$ for all $q\in\Omega'$, whence $f^{-*}$ is slice preserving too.

Finally, let us prove that $f^{-*}=(f^s)^{-*}*f^c$. Because $f^s$ is always a slice preserving function, we now know that $(f^s)^{-*}$ is a slice preserving regular function on $\Omega'$ with $(f^s)^{-*}(q)=f^s(q)^{-1}$ for all $q\in\Omega'$. Thus,
\[f^{-*}(q)=f^s(q)^{-1}f^c(q)=(f^s)^{-*}(q)f^c(q)=(f^s)^{-*}*f^c(q)\]
for all $q\in\Omega'$, as desired.
\end{proof}

\begin{example}
For $p=x_0+y_0I\in\hh$, consider the binomial $B(q)=q-p$. It holds 
\[B^{-*}(q)=[(q-x_0)^2+y_0^2]^{-1}(q-\bar p)\,.\]
In each $x+y\s$ with $x,y\in\rr$ and $y>0$, it holds
\begin{align*}
&(B^{-*})^\circ_s\equiv\Phi(x-p,y)\,,\\
&(B^{-*})'_s\equiv-y^{-1}\Phi(y,x-p)\,.
\end{align*}
\end{example}

Proposition~\ref{prop:quotient} allows us to make the following remark.

\begin{remark}
The algebra of slice regular functions on a slice domain $\Omega$ admits no zero divisors. Indeed, consider two elements $f,g$ with $f\equiv0$, so that $f^{-*}$ is defined on the slice domain $\Omega'=\Omega\setminus Z(f^s)$: if $f*g\equiv0$, then $g=f^{-*}*f*g\equiv0$ in $\Omega'$; whence $g\equiv0$ in $\Omega$ by the Identity Principle~\ref{identity}.
\end{remark}

We now give the definition of regular quotient and find an explicit expression for quotients.

\begin{definition}\label{def:quotient}
Let $\Omega$ be a slice domain in $\hh$ and let $f,g:\Omega\to\hh$ be slice regular functions. We define the \emph{regular quotient} of $f$ and $g$ as $f^{-*}*g:\Omega\setminus Z(f^s)\to\hh$.
\end{definition}

\begin{proposition}\label{prop:quotientformula}
Let $\Omega$ be a slice domain in $\hh$ and let $f,g:\Omega\to\hh$ be slice regular functions. For each $p=x+yI\in\Omega\setminus Z(f^s)$, it holds
\[f^{-*}*g(p)=\widetilde f(T_f(p))^{-1}\,\widetilde g(T_f(p))\]
where $T_f(p):=f^c(p)^{-1}pf^c(p)$, while $\widetilde f(q)=f^\circ_s(p)+\im(q)f'_s(p)$ and $\widetilde g(q)=g^\circ_s(p)+\im(q)g'_s(p)$ for all $q\in x+y\s$.
\end{proposition}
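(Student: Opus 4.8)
The plan is to combine the explicit formula for $f^{-*}$ from Proposition~\ref{prop:quotient}, namely $f^{-*}=f^s(q)^{-1}f^c(q)$, with the product formula of Proposition~\ref{prop:productformula}. Since $f^s$ is slice preserving, property {\it 1.} in Theorem~\ref{thm:operations} tells us that $f^{-*}*g(p)=(f^s)^{-*}*(f^c*g)(p)=f^s(p)^{-1}\,(f^c*g)(p)$, so the burden is to unravel $(f^c*g)(p)$ and recognize the result as $\widetilde f(T_f(p))^{-1}\widetilde g(T_f(p))$.

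First I would apply Proposition~\ref{prop:productformula} to the product $f^c*g$ at the point $p=x+yI$. Since $p\notin Z(f^s)$ forces $f^s(p)=|f^c(p)|^2\neq0$ (because $f^s=f^c*(f^c)^c$ is slice preserving and agrees with $|f^c|^2$ after the usual computation, or more directly because $f^s$ has no zeros off $Z(f^s)$), we are in the nontrivial branch: $(f^c*g)(p)=f^c(p)\,\widetilde g\big(f^c(p)^{-1}pf^c(p)\big)=f^c(p)\,\widetilde g(T_f(p))$, with $\widetilde g(q)=g^\circ_s(p)+\im(q)g'_s(p)$ on $x+y\s$ exactly as in the statement. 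Then $f^{-*}*g(p)=f^s(p)^{-1}f^c(p)\,\widetilde g(T_f(p))$, and it remains to check that $f^s(p)^{-1}f^c(p)=\widetilde f(T_f(p))^{-1}$, i.e.\ $\widetilde f(T_f(p))=f^c(p)^{-1}f^s(p)$.

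The core computation is therefore evaluating $\widetilde f$ at $T_f(p)$. By definition $\widetilde f(T_f(p))=f^\circ_s(p)+\im(T_f(p))f'_s(p)$, and since conjugation by a unit quaternion preserves imaginary part up to that conjugation, $\im(T_f(p))=f^c(p)^{-1}\im(p)f^c(p)$. So $\widetilde f(T_f(p))=f^\circ_s(p)+f^c(p)^{-1}\im(p)f^c(p)\,f'_s(p)=f^c(p)^{-1}\big(f^c(p)f^\circ_s(p)+\im(p)f^c(p)f'_s(p)\big)$. Recalling $f^c(p)=\overline{f^\circ_s(p)}+\im(p)\overline{f'_s(p)}$, one expands $f^c(p)f^\circ_s(p)+\im(p)f^c(p)f'_s(p)$ and, using that $\im(p)^2=-|\im(p)|^2\in\rr$, matches it against the expression for $f^s(p)=|f^\circ_s(p)|^2-|\im(p)f'_s(p)|^2+2\im(p)\re(f^\circ_s(p)\overline{f'_s(p)})$ from Definition~\ref{def:operations}. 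This is a routine but slightly delicate quaternionic manipulation — the only real obstacle — and it is essentially the same identity underlying $f^s=f*f^c=f^c*f$ in property {\it 3.} of Theorem~\ref{thm:operations}; one can in fact shortcut it by invoking that property on an extension triplet $(\widetilde f,N,\Lambda)$ with $p\in\Lambda$, where $\widetilde f^c*\widetilde f\equiv\widetilde f^s$ and the symmetric-case product formula gives $\widetilde f^c(p)\,\widetilde f(f^c(p)^{-1}pf^c(p))=\widetilde f^s(p)=f^s(p)$ directly. Either way we conclude $\widetilde f(T_f(p))=f^c(p)^{-1}f^s(p)$, hence $\widetilde f(T_f(p))^{-1}=f^s(p)^{-1}f^c(p)$, and substituting back yields $f^{-*}*g(p)=\widetilde f(T_f(p))^{-1}\widetilde g(T_f(p))$, as claimed.
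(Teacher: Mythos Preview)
Your proposal is correct and follows essentially the same route as the paper: write $f^{-*}*g(p)=f^s(p)^{-1}(f^c*g)(p)$ via Proposition~\ref{prop:quotient}, then apply Proposition~\ref{prop:productformula} to $f^c*g$ to obtain $f^c(p)\,\widetilde g(T_f(p))$, and finally identify $f^s(p)^{-1}f^c(p)$ with $\widetilde f(T_f(p))^{-1}$. The one simplification the paper makes is that, for the last identification, it applies Proposition~\ref{prop:productformula} directly to the product $f^s=f^c*f$ at $p$ (giving $f^s(p)=f^c(p)\,\widetilde f(T_f(p))$ in one stroke), with no need for your direct quaternionic expansion or for passing to an extension triplet; also, the clean justification that $f^c(p)\neq0$ is that $(f^c)^s=f^s$ together with Proposition~\ref{prop:zerosofsymmetrization1}, rather than the identity $f^s(p)=|f^c(p)|^2$, which only holds for real $p$.
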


\begin{proof}
According to Proposition~\ref{prop:quotient},
\[f^{-*}*g(p)=(f^s)^{-*}*f^c*g(p)=f^s(p)^{-1}(f^c*g(p))\,.\]
We remark that $f^c(p)\neq0$: this follows by applying Proposition~\ref{prop:zerosofsymmetrization1} to $(f^c)^s=f^s$ because we have assumed $f^s(p)\neq0$. By Proposition~\ref{prop:productformula}, it holds
\begin{align*}
&f^s(p)=f^c*f(p)=f^c(p)\widetilde f(T_f(p))\\
&f^c*g(p)=f^c(p)\widetilde g(T_f(p))\,.
\end{align*}
Thus,
\[f^{-*}*g(p)=\widetilde f(T_f(p))^{-1}f^c(p)^{-1}f^c(p)\widetilde g(T_f(p))=\widetilde f(T_f(p))^{-1}\,\widetilde g(T_f(p))\,,\]
as desired.
\end{proof}

We conclude this section with the following remarks. If we modify Definition~\ref{def:reciprocal} and Definition~\ref{def:quotient} assuming $f,g$ to be locally slice functions on $Y\subseteq\hh$ and $f^s\not\equiv0$, we get well-defined locally slice functions $f^{-*},f^{-*}*g:Y\setminus Z(f^s)\to\hh$, which we may call \emph{slice reciprocal} of $f$ and \emph{slice quotient} of $f$ and $g$. Analogs of Proposition~\ref{prop:quotient} and Proposition~\ref{prop:quotientformula} hold. We point out that, as we saw in Example~\ref{ex:zerodivisor}, it is not enough to assume $f\not\equiv0$ to guarantee $f^s\not\equiv0$; in particular, there exist zero divisors among locally slice functions.


\section{Factorization of zeros}\label{sec:factorization}

This section shows how, if $\Omega$ is a slice domain, the zeros of a slice regular function $f:\Omega\to\hh$ can be factored out, although possibly not on the whole domain $\Omega$. We begin with the case of non isolated zeros. For a fixed $2$-sphere $S=x_0+y_0\s$ (with $x_0,y_0\in\rr$ and $y_0>0$) that intersects $\Omega$, we preliminarily set
\[\mathcal{O}(S):=\{q\in S\cap\Omega:f^\circ_s(q)\neq0\mathrm{\ or\ }f'_s(q)\neq0\}\]
and notice that $(S\cap\Omega)\setminus\mathcal{O}(S)$ is the union of the connected components $C$ of $S\cap\Omega$ such that $f_{|_C}\equiv0$.

\begin{theorem}\label{thm:sphericalfactorization}
Let $\Omega$ be a slice domain in $\hh$ and let $f:\Omega\to\hh$ be a slice regular function. Fix a $2$-sphere $S=x_0+y_0\s$ (with $x_0,y_0\in\rr$ and $y_0>0$) that intersects $\Omega$ and consider the slice domain $\Omega':=\Omega\setminus\mathcal{O}(S)$. Then, there exists a slice regular function $h:\Omega'\to\hh$ such that
\[f(q)=[(q-x_0)^2+y_0^2]*h(q)=[(q-x_0)^2+y_0^2]h(q)\]
throughout $\Omega'$. Moreover, $h$ cannot be continuously extended to any open set intersecting $\mathcal{O}(S)$.
\end{theorem}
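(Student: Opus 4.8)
The plan is to produce $h$ locally via extension triplets, glue the local pieces by the Identity Principle, and then verify the non-extendability claim separately. First I would fix a point $p_0 \in \Omega'$ and apply Theorem~\ref{localextension} to obtain an extension triplet $\big(\widetilde f, N, \Lambda\big)$ for $f$ with $p_0 \in \Lambda$. The key observation is that, since $f^\circ_s, f'_s$ are locally constant on $S \cap \Omega$ (by the remark following Definition~\ref{def:sphericalvalueandderivative}), and since $\widetilde f$ agrees with $f$ on the slice domain $\Lambda$, the extended function $\widetilde f$ has the same spherical value and derivative as $f$ along the relevant components. Because $p_0 \in \mathcal O(S)^c$... — more precisely, because $p_0 \notin \mathcal O(S)$ means $f$ vanishes identically on the component of $S \cap \Omega$ through $p_0$ — the quaternionic polynomial $\Delta(q) := (q-x_0)^2 + y_0^2 = B^s(q)$ (where $B(q) = q - p_0$, say, or more canonically $B(q) = q - (x_0+y_0I)$ for any $I$) has $S$ as its zero set, and the classical factorization theory on symmetric slice domains (see~\cite[Chapter 3]{librospringer}) applies to $\widetilde f$: if $\widetilde f$ vanishes on a whole $2$-sphere $S$, then $\Delta$ divides $\widetilde f$, i.e. $\widetilde f = \Delta * \widetilde h$ for a slice regular $\widetilde h$ on (a possibly smaller symmetric slice domain inside) $N$.

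Second, I would pass this local factorization down to $\Omega'$. On $\Lambda \setminus \{q : \Delta(q) = 0\}$ — which contains a neighborhood of $p_0$ in $\Omega'$ once we note $S \cap \Omega' \subseteq (S\cap\Omega)\setminus\mathcal O(S)$, and actually the components of $S\cap\Omega$ on which $f\equiv 0$ are precisely the ones removed, so $S$ itself meets $\Omega'$ only where $f$ already vanishes identically — we get $f(q) = \Delta(q)\,\widetilde h(q)$, hence $h(q) := \Delta(q)^{-1} f(q)$ is slice regular near $p_0$. Since $\Delta$ is slice preserving, $\Delta * \widetilde h = \Delta\, \widetilde h$ pointwise (Theorem~\ref{thm:operations}, part 1), so the $*$-product and pointwise product coincide, which gives both displayed equalities. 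The local definition $h = \Delta^{-1} f$ is manifestly independent of the triplet chosen, so the local pieces glue to a single slice regular $h : \Omega' \to \hh$; alternatively one invokes the Identity Principle~\ref{identity} on $\Omega'$. One should check that $\Omega' = \Omega \setminus \mathcal O(S)$ really is a slice domain — this follows because $\mathcal O(S)$ is a relatively open subset of the $2$-sphere $S$ whose complement in $S\cap\Omega$ is a union of components on which $f\equiv 0$, and removing such a "sheet" from a slice domain leaves each slice still connected (each slice $\Omega'_I$ is $\Omega_I$ with at most the single point $x_0+y_0 I$ or $x_0 - y_0 I$ removed when that point lies in $\mathcal O(S)$, or nothing removed otherwise, and a planar domain minus a point is still a domain; and $\Omega'$ still meets $\rr$).

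The main obstacle — and the part deserving the most care — is the final non-extendability assertion: $h$ cannot be continuously extended to any open set meeting $\mathcal O(S)$. The strategy here is by contradiction. Suppose $h$ extended continuously to an open set $V$ with $V \cap \mathcal O(S) \neq \emptyset$; pick $q_1 \in V \cap \mathcal O(S)$, so $f^\circ_s(q_1) \neq 0$ or $f'_s(q_1) \neq 0$. On $V \setminus \{q : \Delta(q)=0\}$ we have $f = \Delta\, h$, and by continuity this persists on all of $V$ (both sides continuous, agreeing on a dense subset of $V$ if $q_1$ is a limit of points off $S$), so in particular $f(q_1) = \Delta(q_1) h(q_1) = 0$ since $q_1 \in S$ forces $\Delta(q_1) = 0$. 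Thus $q_1$ is a zero of $f$ lying on $S$ with $f^\circ_s$ or $f'_s$ nonzero there; by Proposition~\ref{prop:zeros} this means $f'_s(q_1) \neq 0$ and $q_1$ is an isolated zero of $f$ in its component $C$ of $S\cap\Omega$. But then I would argue that $h$ blows up: along a sequence $q_n \to q_1$ with $q_n \in C \setminus\{q_1\}$ we have $f(q_n) \neq 0$ while $\Delta(q_n) \to 0$, so $|h(q_n)| = |f(q_n)|/|\Delta(q_n)| \to \infty$ (using $|f(q_n)| = |\im(q_n) - \im(q_1)|\,|f'_s(q_1)|$ from the proof of Proposition~\ref{prop:zeros} and $|\Delta(q_n)|$ vanishing to first order as well but with the sphere-direction contribution making the ratio unbounded), contradicting continuity of $h$ at $q_1$. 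The delicate computational point is comparing the vanishing orders of $f$ and $\Delta$ transverse to $S$ versus along $S$; I would handle it by restricting to the $2$-sphere $S$ itself, where $\Delta \equiv 0$ identically but $f$ is not identically zero near $q_1$, making the ratio $f/\Delta$ visibly ill-defined and unbounded on any neighborhood of $q_1$ in $S$.
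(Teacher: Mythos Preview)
Your construction of $h$ follows the paper's approach: define $h=\Delta^{-1}f$ on $\Omega\setminus S$ (where $\Delta(q)=(q-x_0)^2+y_0^2$), and at points $p_0\in S\cap\Omega'$ use an extension triplet together with the classical spherical factorization on symmetric slice domains to see that this $h$ extends regularly across such $p_0$. One expository slip: your sentence ``$p_0\notin\mathcal{O}(S)$ means $f$ vanishes identically on the component of $S\cap\Omega$ through $p_0$'' only makes sense when $p_0\in S$; for $p_0\in\Omega'\setminus S$ you do not need the triplet at all, since $h=\Delta^{-1}f$ is already regular there. The paper separates these two cases explicitly.

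The non-extendability argument, however, has a genuine error. You take a sequence $q_n\to q_1$ with $q_n\in C\setminus\{q_1\}\subseteq S$ and write $|h(q_n)|=|f(q_n)|/|\Delta(q_n)|\to\infty$. But $q_n\in S$ means $\Delta(q_n)=0$ \emph{exactly}, not merely $\Delta(q_n)\to 0$, so this ratio is undefined and the blow-up computation is vacuous. Your final paragraph senses the problem but does not resolve it: ``ill-defined and unbounded'' is not an argument. The correct (and much shorter) route is the one the paper takes: once you have established by continuity that $f=\Delta\,h$ on all of $V$, evaluate on $V\cap S$ to get $f\equiv 0$ there. Since $V\cap\mathcal{O}(S)$ is a nonempty relatively open subset of $S$, and $f^\circ_s,f'_s$ are locally constant on $S\cap\Omega$, it follows that $f^\circ_s$ and $f'_s$ both vanish on the component of $S\cap\Omega$ meeting $V\cap\mathcal{O}(S)$, contradicting the definition of $\mathcal{O}(S)$. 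You in fact already had the ingredients for this: from $f(q_n)=\Delta(q_n)h(q_n)=0$ and $f(q_n)\neq 0$ (your own isolated-zero observation) you get an immediate contradiction without any limiting argument.
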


\begin{proof}
Let us first define a slice regular $h:\Omega\setminus S\to\hh$ by setting 
\[h(q):=[(q-x_0)^2+y_0^2]^{-*}*f(q)=[(q-x_0)^2+y_0^2]^{-1}f(q)\,.\]
If $\mathcal{O}(S)=S\cap\Omega$, then $\Omega'=\Omega\setminus S$ and the thesis immediately follows. Let us now suppose $\mathcal{O}(S)$ to be a proper subset of $S\cap\Omega$, whence $S\cap\Omega'\neq\emptyset$. Each connected component $C$ of $S\cap\Omega$ either does not intersect $S\cap\Omega'$ or is entirely included in $S\cap\Omega'$. The latter happens if, and only if, $(f^\circ_s)_{|_C}\equiv0\equiv(f'_s)_{|_C}$, which is in turn equivalent to $f_{|_C}\equiv0$.

Pick $\widetilde p\in S\cap\Omega'$ and note that $f_{|_C}\equiv0$ in the connected component $C$ of $S\cap\Omega$ that includes $\widetilde p$. We will extend both the slice regular function $h$ and the validity of the equality $f(q)=[(q-x_0)^2+y_0^2]*h(q)$ to a slice domain that includes the point $\widetilde p$.

By Theorem~\ref{localextension}, $f$ has an extension triplet $\big(\widetilde f,N,\Lambda\big)$ with $\widetilde p\in \Lambda$. Since $(\widetilde f)_{|_{C\cap\Lambda}}\equiv0$, we can apply~\cite[Theorem 3.1]{librospringer} to conclude that $(\widetilde f)_{|_S}\equiv0$.

By~\cite[Proposition 3.17]{librospringer}, there exists a slice regular function $\widetilde h : N \to \hh$ such that $\widetilde f(q)=[(q-x_0)^2+y_0^2]*\widetilde h(q)$ in $N$. As a consequence,
\[f(q)=[(q-x_0)^2+y_0^2]*\widetilde h(q)\]
in $\Lambda$. Now, $\widetilde h$ coincides with $h$ in $\Lambda\setminus S$. This completes the proof of the first statement.

We are left with proving that the function $h$ cannot admit a continuous extension to any open subset that intersects $\mathcal{O}(S)$ in a non empty subset $Y$. If it did then, by the equality $f(q)=[(q-x_0)^2+y_0^2]h(q)$ valid in $\Omega'$, $f$ would vanish identically in the intersection $Y$. This would contradict the definition of $\mathcal{O}(S)\supseteq Y$.
\end{proof}

We are now ready to prove that it is also possible to factor out single zeros. For a fixed $2$-sphere $S=x_0+y_0\s$ (with $x_0,y_0\in\rr$ and $y_0>0$) that intersects $\Omega$, we preliminarily set
\[\mathcal{O}(p):=\{q\in S\cap\Omega : f^\circ_s(q)\neq-\im(p)f'_s(q)\}\,.\]
Notice that $p\in(S\cap\Omega)\setminus\mathcal{O}(p)$ if, and only if, $f(p)=0$.

\begin{theorem}\label{thm:factorization}
Let $\Omega$ be a slice domain in $\hh$ and let $f:\Omega\to\hh$ be a slice regular function. Fix $p\in\Omega$ and consider the slice domain $\Omega':=\Omega\setminus\mathcal{O}(p)$. Then there exists a slice regular function $g:\Omega'\to\hh$ such that
\[f(q)=(q-p)*g(q)\]
throughout $\Omega'$. Moreover, for any slice domain $\Lambda$ that intersects $\mathcal{O}(p)$, there cannot exist a slice regular $g:\Lambda\to\hh$ such that the previous equality holds in $\Lambda$.
\end{theorem}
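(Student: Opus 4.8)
The plan is to localize and reduce to the known symmetric case, exactly as in the proof of Theorem~\ref{thm:sphericalfactorization}. The point $p$ lies on the $2$-sphere $S=x_0+y_0\s$, and by definition $\mathcal{O}(p)=\{q\in S\cap\Omega: f^\circ_s(q)\neq-\im(p)f'_s(q)\}$, so that $(S\cap\Omega)\setminus\mathcal{O}(p)$ consists exactly of the points $q$ of $S\cap\Omega$ where $f^\circ_s(q)+\im(p)f'_s(q)=0$; in particular $p$ itself lies in $\Omega'=\Omega\setminus\mathcal{O}(p)$ since $f(p)=0$. First I would dispose of the case $p\in\rr$: there $f(p)=0$ means $f^\circ_s(p)=0$, and $f$ vanishes at a real point, so the classical factorization $f(q)=(q-p)g(q)$ with $g$ slice regular on all of $\Omega$ follows from the Local Extension Theorem together with~\cite[Proposition 3.17]{librospringer} (or directly from the power series expansion at $p$), and $\mathcal{O}(p)$ is empty. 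From now on assume $p\in\Omega\setminus\rr$, so $y_0>0$.

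For the existence of $g$ on $\Omega'$, start by defining $g$ on $\Omega\setminus S$ by the regular quotient $g(q):=(q-p)^{-*}*f(q)$; note that $Z\big((q-p)^s\big)=Z\big((q-x_0)^2+y_0^2\big)=S$, so this is legitimate on $\Omega\setminus S$ and $g$ is slice regular there. It then remains to extend $g$ (and the identity $f=(q-p)*g$) across each point $\widetilde p\in S\cap\Omega'$. By definition of $\mathcal{O}(p)$, such a $\widetilde p=x_0+y_0J$ satisfies $f^\circ_s(\widetilde p)=-\im(p)f'_s(\widetilde p)$, and since $f^\circ_s,f'_s$ are locally constant on $S\cap\Omega$ this persists on the whole connected component $C$ of $S\cap\Omega$ through $\widetilde p$; writing $p=x_0+y_0I$ this says $f^\circ_s+\im(p)f'_s\equiv 0$ on $C$, i.e.\ a certain holomorphic-type extension of $f$ vanishes at the single point $p$ of $x_0+y_0\s$. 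Apply Theorem~\ref{localextension} to get an extension triplet $\big(\widetilde f,N,\Lambda\big)$ with $\widetilde p\in\Lambda$; since on $C\cap\Lambda$ the regular extension $\widetilde f$ agrees with $f$ and $\widetilde f^\circ_s=f^\circ_s$, $\widetilde f'_s=f'_s$ there, we get $\widetilde f^\circ_s(p)+\im(p)\widetilde f'_s(p)=0$, i.e.\ $\widetilde f(p)=0$. Now $N$ is a symmetric slice domain, so by the classical factorization of a single zero over symmetric slice domains (\cite[Theorem 3.20 / \S 3.3]{librospringer}) there is a slice regular $\widetilde g:N\to\hh$ with $\widetilde f(q)=(q-p)*\widetilde g(q)$ on $N$; restricting to $\Lambda$ gives $f(q)=(q-p)*\widetilde g(q)$ on $\Lambda$, and on $\Lambda\setminus S$ uniqueness of the quotient forces $\widetilde g=g$. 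Covering $\Omega'$ by such $\Lambda$'s patches $g$ into a globally defined slice regular function on $\Omega'$ with $f=(q-p)*g$ there.

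For the negative statement, suppose toward a contradiction that $\Lambda$ is a slice domain meeting $\mathcal{O}(p)$ and that $g:\Lambda\to\hh$ is slice regular with $f(q)=(q-p)*g(q)$ on $\Lambda$. Pick $\widehat p=x_0+y_0K\in\Lambda\cap\mathcal{O}(p)$; then $f^\circ_s(\widehat p)\neq-\im(p)f'_s(\widehat p)$, hence $f(p)\neq 0$ would be forced if $p\in\Lambda$ — but more to the point, evaluate the product formula. By Proposition~\ref{prop:productformula} applied to $(q-p)*g$, at any $\widehat p\in\Lambda$ with $(\widehat p-p)\neq 0$ we have $f(\widehat p)=(\widehat p-p)\,\widetilde g\big((\widehat p-p)^{-1}\widehat p(\widehat p-p)\big)$, and the one point of $x_0+y_0\s$ where $q-p$ vanishes is $p$; so if $p\notin\Lambda$ there is no constraint, but if $p\in\Lambda$ then taking $\widehat p=p$ gives $f(p)=(p-p)*g(p)=0$, consistent. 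The real contradiction comes from the spherical data: comparing spherical values and derivatives on $x_0+y_0\s$, the identity $f=(q-p)*g$ on $\Lambda$ forces, on the connected component of $(x_0+y_0\s)\cap\Lambda$ containing $\widehat p$, the relation $f^\circ_s(\widehat p)+\im(p)f'_s(\widehat p)=0$ (this is precisely the statement that the regular product $(q-p)*g$ is divisible by $q-p$ and so vanishes at $p$ in the relevant sphere, expressed via Definition~\ref{def:operations}); but $\widehat p\in\mathcal{O}(p)$ says exactly the opposite, $f^\circ_s(\widehat p)\neq-\im(p)f'_s(\widehat p)$ — a contradiction.

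The main obstacle is the last paragraph: one must pin down precisely what $f=(q-p)*g$ implies for the spherical value and derivative of $f$ on the sphere $x_0+y_0\s$, and check that divisibility by $q-p$ over a connected component $C'$ of $(x_0+y_0\s)\cap\Lambda$ is equivalent to $f^\circ_s+\im(p)f'_s\equiv0$ on $C'$. This is a direct computation from Definition~\ref{def:operations}: since $(q-p)^\circ_s\equiv x_0-p$ and $(q-p)'_s\equiv 1$ on that sphere, the product formula for $^*$-products gives $(f)^\circ_s=(x_0-p)g^\circ_s+\im(q)^2\cdot 1\cdot g'_s$ and $f'_s=(x_0-p)g'_s+g^\circ_s$ on $C'$, from which $f^\circ_s+\im(p)f'_s=(x_0-p+\im(p))g^\circ_s+(\im(q)^2+(x_0-p)\im(p)+\im(p))g'_s=(p-p)(\cdots)$-type cancellation yields $0$ — I would verify this identity carefully, and it is what forces $C'\subseteq(S\cap\Omega)\setminus\mathcal{O}(p)$, contradicting $\widehat p\in\mathcal{O}(p)$. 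The rest is routine localization already rehearsed in Theorem~\ref{thm:sphericalfactorization}.
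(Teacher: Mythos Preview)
Your approach is correct and genuinely different from the paper's for the existence part. The paper does \emph{not} localize via the Local Extension Theorem; instead it multiplies by the conjugate factor, setting $\ell(q):=(q-\bar p)*f(q)$, computes that $\ell^\circ_s\equiv0\equiv\ell'_s$ on every component $C\subseteq S\cap\Omega'$, and then applies Theorem~\ref{thm:sphericalfactorization} (already proven) to obtain $\ell(q)=[(q-x_0)^2+y_0^2]*g(q)=(q-\bar p)*(q-p)*g(q)$, whence $f(q)=(q-p)*g(q)$ on $\Omega'$. Your route is more direct conceptually---it simply repeats the localization pattern---whereas the paper's route is more self-contained: it avoids a second appeal to Theorem~\ref{localextension} and to the symmetric-domain single-zero factorization, reducing everything to the spherical case already in hand. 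For the negative statement, your final computation (that $f=(q-p)*g$ on $\Lambda$ forces $f^\circ_s+\im(p)f'_s\equiv0$ on $S\cap\Lambda$) is exactly what the paper does, though the paper writes it more cleanly using $(q-p)^\circ_s=-\im(p)$ on $S$; your displayed formula has a stray ``$+\im(p)$'' term in the $g'_s$ coefficient that you should clean up.

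Two small points. First, the theorem does \emph{not} assume $f(p)=0$; your opening paragraph imposes this, and your real-case digression relies on it. Your existence argument does not actually need it (the key point is that the symmetric extension $\widetilde f$ satisfies $\widetilde f(p)=0$ because $\widetilde f^\circ_s,\widetilde f'_s$ are globally constant on $S\subset N$ and agree with $f^\circ_s,f'_s$ at $\widetilde p\in\Lambda\cap S\cap\Omega'$), so just drop the assumption. Second, the detour through Proposition~\ref{prop:productformula} in your negative part is a red herring; the spherical-data computation is the whole argument, and you should lead with it.
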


\begin{proof}
If $\mathcal{O}(p)=S\cap\Omega$, then $\Omega'=\Omega\setminus S$ and first statement is easily proven by setting $g(q)=(q-p)^{-*}*f(q)$. Let us now suppose $\mathcal{O}(p)$ to be a proper subset of $S\cap\Omega$, whence $S\cap\Omega'\neq\emptyset$. Each connected component $C$ of $S\cap\Omega$ either does not intersect $S\cap\Omega'$ or is entirely included in $S\cap\Omega'$. The latter happens if, and only if, $f^\circ_s(q)=-\im(p)f'_s(q)$ for all $q\in C$. Moreover, if $C$ includes $p$, then the latter equality is equivalent to $f(p)=0$.

Setting $\ell(q):=(q-\bar p)*f(q)$ defines a slice regular function $\ell:\Omega\to\hh$. Moreover, in each $2$-sphere $x+y\s$ it holds:
\begin{align*}
&\ell^\circ_s(q)=(x-\bar p)f^\circ_s(q)-y^2f'_s(q)\,,\\
&\ell'_s(q)=(x-\bar p)f'_s(q)+f^\circ_s(q)\,.
\end{align*}
In particular, for every connected component $C$ of $S\cap\Omega'$ and for all $q\in C$, it holds:
\begin{align*}
&\ell^\circ_s(q)=\im(p)f^\circ_s(q)-|\im(p)|^2f'_s(q)\,,\\
&\ell'_s(q)=\im(p)f'_s(q)+f^\circ_s(q)\,,\\
&f^\circ_s(q)=-\im(p)f'_s(q)\,.
\end{align*}
We conclude that $(\ell^\circ_s)_{|_C}\equiv0\equiv(\ell'_s)_{|_C}$, whence $\ell_{|_C}\equiv0$. Thus, $\ell_{|_{S\cap\Omega'}}\equiv0$. By an appropriate application of Theorem~\ref{thm:sphericalfactorization}, we conclude that
\[\ell(q)=[(q-x_0)^2+y_0^2]*g(q)=(q-\bar p)*(q-p)*g(q)\]
for some slice regular $g:\Omega'\to\hh$. Remembering that $\ell(q)=(q-\bar p)*f(q)$ throughout $\Omega$, we conclude that $f(q)=(q-p)*g(q)$ in $\Omega'$, as desired.

Finally, if $\Lambda$ is a slice domain where $f(q)=(q-p)*g(q)$ for some slice regular $g:\Lambda\to\hh$, then at each $q\in S\cap\Lambda$ it holds
\begin{align*}
f^\circ_s(q)&=-\im(p)\,g^\circ_s(q)+\im(p)^2g'_s(q)\,,\\
f'_s(q)&=-\im(p)\,g'_s(q)+g^\circ_s(q)\,,
\end{align*}
whence $f^\circ_s(q)=-\im(p)f'_s(q)$. The latter equality is false if $q\in\mathcal{O}(p)$.
\end{proof}

The next result addresses the problem of factoring out real zeros.

\begin{proposition}\label{prop:realfactorization}
Let $\Omega$ be a slice domain in $\hh$, let $f:\Omega\to\hh$ be a slice regular function and fix $x\in\Omega\cap\rr$. It holds $f(x)=0$ if, and only if, there exists a slice regular function $g:\Omega\to\hh$ such that
\[f(q)=(q-x)*g(q)=(q-x)g(q)\]
throughout $\Omega$.
\end{proposition}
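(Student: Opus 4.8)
The plan is to prove the two implications separately, exploiting the fact that $x$ is a real point so that the regular product with the binomial $q-x$ behaves like an ordinary product along each slice.

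For the \emph{if} direction, suppose $f(q)=(q-x)*g(q)$ in $\Omega$. By property \emph{2.} of Theorem~\ref{thm:operations}, or simply by Definition~\ref{def:operations} evaluated at the real point $x$, the regular product reduces to the pointwise product there: $f(x)=(x-x)g(x)=0$. (Alternatively, the binomial $B(q)=q-x$ is slice preserving since $x\in\rr$, so by property \emph{1.} of Theorem~\ref{thm:operations} we have $f(q)=(q-x)*g(q)=(q-x)g(q)$ everywhere, and setting $q=x$ gives $f(x)=0$.)

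For the \emph{only if} direction, assume $f(x)=0$. The key point is that $\mathcal{O}(x)$, as defined before Theorem~\ref{thm:factorization}, is \emph{empty} in this case: by Proposition~\ref{prop:zeros} applied at the real point $x$, or directly from $f(x)=f^\circ_s(x)=0$, the condition $f^\circ_s(q)\neq -\im(x)f'_s(q)=0$ defining $\mathcal{O}(x)$ would force... wait, $x\in\rr$ means $\im(x)=0$, so $\mathcal{O}(x)$ consists of points $q$ on spheres through $x$ where $f^\circ_s(q)\neq 0$; but the only sphere through a real point $x$ is the degenerate one $\{x\}$, and there $f^\circ_s(x)=f(x)=0$. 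Hence $\mathcal{O}(x)=\emptyset$ and $\Omega'=\Omega\setminus\mathcal{O}(x)=\Omega$. Now I apply Theorem~\ref{thm:factorization} with $p=x$: it yields a slice regular function $g:\Omega'\to\hh=\Omega\to\hh$ with $f(q)=(q-x)*g(q)$ throughout $\Omega$. Finally, since $B(q)=q-x$ is slice preserving (as $x\in\rr$), property \emph{1.} of Theorem~\ref{thm:operations} upgrades this to $f(q)=(q-x)*g(q)=(q-x)g(q)$, as claimed.

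The main subtlety, and the only place where care is needed, is the verification that $\mathcal{O}(x)=\emptyset$ so that the factorization from Theorem~\ref{thm:factorization} is valid on the \emph{entire} domain $\Omega$ rather than on a proper subdomain; this is exactly the feature that distinguishes real zeros from non-real ones and is the reason the statement can assert a global factorization with no ``ghost'' obstruction. Everything else is a direct appeal to the already-established product formula and to the slice-preserving property of real binomials. I expect no genuine obstacle; the proof is short once Theorem~\ref{thm:factorization} is in hand.
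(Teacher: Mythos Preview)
Your ``if'' direction is fine. The gap is in the ``only if'' direction: you invoke Theorem~\ref{thm:factorization} at the real point $p=x$, but that theorem is not set up for real $p$. The set $\mathcal{O}(p)$ is defined in the paper only for $p$ lying on a genuine $2$-sphere $x_0+y_0\s$ with $y_0>0$; the spherical derivative $f'_s$ appearing in its definition is itself undefined on $\rr$. Your extrapolation ``the only sphere through $x$ is $\{x\}$, so $\mathcal{O}(x)=\emptyset$'' is a plausible convention, but it is not what the paper establishes. More importantly, the \emph{proof} of Theorem~\ref{thm:factorization} passes through Theorem~\ref{thm:sphericalfactorization}, which explicitly requires $y_0>0$, so even if one adopted your convention the argument behind Theorem~\ref{thm:factorization} would not go through at a real point. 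This is precisely why the paper states and proves Proposition~\ref{prop:realfactorization} separately, with the sentence ``The next result addresses the problem of factoring out real zeros'' introducing it.

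The paper's own proof avoids this by working directly: it defines $g(q):=(q-x)^{-1}f(q)$ on $\Omega\setminus\{x\}$, observes that $g$ is slice regular there, and then removes the singularity at $x$ by appealing to the classical factorization result on a small Euclidean ball $B\subset\Omega$ centered at $x$ (\cite[Proposition~3.17]{librospringer}). That local extension, together with the global definition of $g$ away from $x$, gives the slice regular $g$ on all of $\Omega$. Your intuition that the real case has no ``ghost'' obstruction is correct, but you need to supply this direct argument rather than citing Theorem~\ref{thm:factorization}.
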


\begin{proof}
Let us first define a slice regular $g:\Omega\setminus\{x\}\to\hh$ by setting 
\[g(q):=(q-x)^{-*}*f(q)=(q-x)^{-1}f(q)\,.\]
If $f(x)\neq0$, then $g$ cannot be continuously extended to the point $x$. If, instead, $f(x)=0$, let us consider an open Euclidean ball $B$ centered at $x$ and included in $\Omega$: by~\cite[Proposition 3.17]{librospringer}, $g_{|_{B\setminus\{x\}}}$ extends to a slice regular function $\widetilde g:B\to\hh$ fulfilling the equality $f(q)=(q-x)*\widetilde g(q)$ in $B$.
\end{proof}

\begin{definition}
Let $\Omega$ be a slice domain in $\hh$ and let $f:\Omega\to\hh$ be a slice regular function. If there exist slice regular functions $h,g:\Omega\to\hh$ such that
\[f(q)=h(q)*g(q)\]
in $\Omega$, we say that $h(q)$ \emph{(left) divides} $f(q)$. Now suppose $C$ to be a connected component of $(x+y\s)\cap\Omega$: we say that $h(q)$ \emph{(left) divides} $f(q)$ \emph{near} $C$ if there exist a slice domain $\Lambda$ with $C\subseteq\Lambda\subseteq\Omega$ and slice regular functions $h,g:\Lambda\to\hh$ such that
\[f(q)=h(q)*g(q)\]
in $\Lambda$. Finally, for any point $p_0\in x+y\s$, we say that $h(q)$ \emph{(left) divides} $f(q)$ \emph{near} $p_0$ if $h(q)$ left divides $f(q)$ near the connected component of $(x+y\s)\cap\Omega$ that includes $p_0$.
\end{definition}

We now present examples of slice regular functions $g:\Omega\to\hh$ and points $\widetilde p=x+yJ\in\Omega$ such that $q-\widetilde p$ divides $g(q)$
\begin{enumerate}
\item near the connected component of $(x+y\s)\cap\Omega$ that includes $\widetilde p$ (whence $g(\widetilde p)=0$);
\item near a connected component of $(x+y\s)\cap\Omega$ that does not include $\widetilde p$ (while $g$ has no zeros in $x+y\s$);
\item globally in $\Omega$, despite the fact the $g^\circ_s,g'_s$ are not constant in $(x+y\s)\cap\Omega$ (a case when $g(\widetilde p)=0$, too).
\end{enumerate}

\begin{example}\label{ex:factorization}
Let $f:\Omega\to\hh$ be the slice regular function of Examples~\ref{ex:douren} and~\ref{ex:douren2}. Recall that we have set $p:=-1+2I$. Let us pick $J\in\s$ and set $\widetilde p:=-1+2J$. We define
\[v:=f^\circ_s(p)+\im(\widetilde p)f'_s(p)=\frac12\left[\phi_0(\bar p)-\pi I\right]+\frac{J}2\left[I\phi_0(\bar p)-\pi\right]\]
and notice that
\begin{enumerate}
\item If $\widetilde p\in C^+$, then $v=f^\circ_s(\widetilde p)+\im(\widetilde p)f'_s(\widetilde p)=f(\widetilde p)$.
\item If $\widetilde p\in C^-\setminus\{\bar p\}$, then $v\neq f(\widetilde p)$ because $f(\widetilde p)-v=\frac12\left[\phi_0(\bar p)+\pi I\right]+\frac{J}2\left[I\phi_0(\bar p)+\pi\right]-v=\pi(I+J)\neq0$.
\item If $\widetilde p=\bar p$, then $v=f(\bar p)$ because $f(\widetilde p)-v=\pi(I-I)=0$.
\end{enumerate}
We define
\[g:\Omega\to\hh,\quad q\mapsto f(q)-v\,.\]
By direct computation,
\begin{align*}
(g^\circ_s)_{|_{C^+}}&\equiv-\frac{J}2\left[I\phi_0(\bar p)-\pi\right]\,,\\
(g'_s)_{|_{C^+}}&\equiv\frac14\left[I\phi_0(\bar p)-\pi\right]\,,
\end{align*}
whence the constant values of $(g^\circ_s)_{|_{C^+}}$ and $-2J(g'_s)_{|_{C^+}}=-\im(\widetilde p)(g'_s)_{|_{C^+}}$ coincide. On the other hand,
\begin{align*}
(g^\circ_s)_{|_{C^-}}&\equiv-\frac{JI}2\phi_0(\bar p)+\pi\left(I+\frac{J}2\right)\,,\\
(g'_s)_{|_{C^-}}&\equiv\frac14\left[I\phi_0(\bar p)+\pi\right]\,.
\end{align*}
If $J=-I$, then the constant values of $(g^\circ_s)_{|_{C^-}}$ and $-\im(\widetilde p)(g'_s)_{|_{C^-}}$ coincide. If, instead, $J\neq-I$, we can prove that, for all $\widehat p\in(-1+2\s)\cap\Omega$, the constant values of $(g^\circ_s)_{|_{C^-}}$ and $-\im(\widehat p)(g'_s)_{|_{C^-}}$ are distinct. Indeed, the constant value of
\[\left|(g^\circ_s)_{|_{C^-}}\right|^2-4\left|(g'_s)_{|_{C^-}}\right|^2=\left|J(g^\circ_s)_{|_{C^-}}\right|^2-4\left|(g'_s)_{|_{C^-}}\right|^2\]
is different from $0$, according to the following computation:
\begin{align*}
&\left|\frac{I}2\phi_0(\bar p)+\pi\left(JI-\frac12\right)\right|^2-\left|\frac{I}2\phi_0(\bar p)+\frac\pi2\right|^2\\
&=\pi^2\left|JI-\frac12\right|^2+\pi\re\left(\left(IJ-\frac12\right)I\phi_0(\bar p)\right)-\frac{\pi^2}4-\pi\re\left(\frac{I}2\phi_0(\bar p)\right)\\
&=\pi^2(1-\re(JI))+\pi\re\left(\left(IJ-1\right)I\phi_0(\bar p)\right)\\
&=\pi(1-\re(JI))(\pi+\arg_0(-1-4I))\neq0\,.
\end{align*}
The last equality holds because $I\phi_0(\bar p)=-\arg_0(-1-4I)+I\ln\sqrt{17}\in L_I$. There are three possibilities:
\begin{enumerate}
\item If $\widetilde p\in C^+$, then $q-\widetilde p$ divides $g(q)$ near $C^+$ but not near $C^-$. Moreover,
\[Z(g)\cap C^+=\{\widetilde p\},\quad Z(g)\cap C^-=\emptyset\]
\item If $\widetilde p\in C^-\setminus\{\bar p\}$, then $q-\widetilde p$ divides $g(q)$ near $C^+$ but not near $C^-$. Moreover,
\[Z(g)\cap C^+=\emptyset,\quad Z(g)\cap C^-=\emptyset\]
\item If $\widetilde p=\bar p$, then $q-\bar p$ divides $g(q)$ globally in $\Omega$. Moreover,
\[Z(g)\cap C^+=\emptyset,\quad Z(g)\cap C^-=\{\bar p\}\,.\]
\end{enumerate}
\end{example}

We conclude this section with the next definition, consistent with~\cite[Definition 3.26]{librospringer} (which, in turn, derived from~\cite{zeros}).

\begin{definition}
Let $\Omega$ be a slice domain in $\hh$ and let $f:\Omega\to\hh$ be a slice regular function with $f\not\equiv0$. Fix an $S:=x+y\s$ (with $x,y\in\rr$) intersecting $\Omega$, a connected component $C$ of the intersection $S\cap\Omega$ and a point $p\in S$. We define the \emph{classical multiplicity of $f$ at $p$ relative to $C$}, and denote by $m_f^C(p)$, the number $n\in\nn$ such that $(q-p)^{*n}$ divides $f(q)$ near $C$ but $(q-p)^{*(n+1)}$ does not. If, moreover, $S\cap\Omega$ has a unique connected component $C$, then we also call $m_f^C(p)$ the \emph{classical multiplicity of $f$ at $p$} and denote it as $m_f(p)$.
\end{definition}

If $x\in\Omega\cap\rr$, then of course $x+0\s=\{x\}$ intersects $\Omega$ in a connected set: namely, the singleton $\{x\}$. Thus, it is automatically possible to write $m_f(x)$ without specifying a connected component $C$.

The previous definition is well posed for the following reason. Supposing $p=x+yI$, any equality $f(q)=(q-p)^{*n}*g(q)$ valid in a slice domain $\Lambda$ implies an equality $f_I(z)=(z-p)^ng_I(z)$ valid for all $z\in\Lambda_I$. Moreover, $f_I$ is a holomorphic function: if for all $n\in\nn$ there exist holomorphic functions $\phi^{[n]}$ such that $f_I(z)=(z-p)^n\phi^{[n]}(z)$, then $f_I\equiv0$. But in such a case, it holds $f\equiv0$ by the Identity Principle~\ref{identity}.

\begin{example}
Let us consider again the functions $g$ and the points $\widetilde p$ appearing in Example~\ref{ex:factorization}, in the three separate cases listed there.
\begin{enumerate}
\item It holds $g(\widetilde p)=0$ and $m_{g}^{C^+}(\widetilde p)\geq1$; on the other hand, $m_{g}^{C^-}(\widetilde p)=0$.
\item It holds $g(\widetilde p)\neq0$ and $m_{g}^{C^+}(\widetilde p)\geq1$; on the other hand, $m_{g}^{C^-}(\widetilde p)=0$.
\item It holds $g(\widetilde p)=0$ and $m_{g}^{C^+}(\widetilde p)\geq1,\,m_{g}^{C^-}(\widetilde p)\geq1$.
\end{enumerate}
\end{example}


\section{Applications of factorization}\label{sec:applications}

The results of the previous section allow us to provide explicit examples of all types of zeros envisioned in Section~\ref{sec:zerosets}. In the first example, the given $g$ admits exactly one zero in the given $2$-sphere $x+y\s$:

\begin{example}\label{ex:1point}
Let us refer to Example~\ref{ex:factorization} and pick $J=I$, whence $\widetilde p=p\in C^+$. Then, $g(q):=f(q)-f(p)=f(q)+\pi I$ has
\[Z(g)\cap C^+=\{p\},\quad Z(g)\cap C^-=\emptyset\]
Moreover, $q-p$ divides $g(q)$ near $C^+$ but not near $C^-$.
\end{example}

In the second example, the given $2$-sphere $x+y\s$ intersects the domain in two connected components. The function $\ell$ considered vanishes identically on one component and admits exactly one zero in the other component:

\begin{example}\label{ex:1cap1point}
Consider again the function $g:\Omega\to\hh$ in Example~\ref{ex:1point}. Let us define $\ell:\Omega\to\hh$ by setting
\[\ell(q):=(q-\bar p)*g(q)\,.\]
Then $(q-p)^s=(q-\bar p)*(q-p)=q^2+2q+5$ divides $\ell(q)$ near $C^+$ but not near $C^-$. Moreover, $q-\bar p$ divides $\ell(q)$ globally in $\Omega$. As a consequence,
\[Z(\ell)\supseteq C^+,\quad Z(\ell)\cap C^-=\{\bar p\}\,.\]
\end{example}

In the third example, too, the given $2$-sphere $x+y\s$ intersects the domain in two connected components. The function $m$ considered admits exactly one zero in each connected component:

\begin{example}\label{ex:2points}
Consider again the function $g:\Omega\to\hh$ in Example~\ref{ex:1point}. The point $p=-1+2I$ was a zero of $g$ and the binomial $q-p$ divided $g(q)$ near $C^+$. 

Let us choose $I_0\in\s\setminus\{-I\}$ with $|I_0-I|>\frac12$. Let us consider the points
\begin{align*}
p_0&:=-1+2I_0 \in C^-\,,\\
p_1&:=g(p_0)^{-1}p_0g(p_0)=-1+2I_1\,,
\end{align*}
where $I_1:=g(p_0)^{-1}I_0g(p_0)\in\s$. For future reference, we prove that $I_1\neq\pm I$. Indeed, the following equalities are equivalent:
\begin{align*}
g(p_0)^{-1}I_0g(p_0)&=\pm I\,,\\
I_0g(p_0)&=g(p_0)(\pm I)\,,\\
I_0g^\circ_s(p_0)-2g'_s(p_0)&=\pm Ig^\circ_s(p_0)\pm 2I_0Ig'_s(p_0)\,,\\
(I_0\mp I)g^\circ_s(p_0)&=2(1\pm I_0I)g'_s(p_0)\,,\\
g^\circ_s(p_0)&=\pm2Ig'_s(p_0)\,;
\end{align*}
the last equality being false, because we proved in Example~\ref{ex:factorization} that $|g^\circ_s(p_0)|\neq2|g'_s(p_0)|$.

Let us define $m:\Omega\to\hh$ as the function
\[m(q):=g(q)*(q-p_1)=qg(q)-g(q)p_1\,.\]
Since $q-p$ divides $g(q)$ near $C^+$, it also divides $m(q)$ near $C^+\ni p$. Thus, $m(p)=0$. 
Now,
\[m'_s(p)=g^\circ_s(p)-g'_s(p)\im(p_1)=-g'_s(p)(\im(p)+\im(p_1))=-2g'_s(p)(I+I_1)\,,\]
 is different from $0$ becuase $g'_s(p)\neq0$ and $I_1\neq-I$. Thus,
\[Z(m)\cap C^+=\{p\}\,.\]

Let us turn to the connected component $C^-$ of $(-1+2\s)\cap\Omega$, which includes $\bar p$ and $p_0$. By the definition of $p_1$, it holds
\[m(p_0)=p_0g(p_0)-g(p_0)p_1=p_0g(p_0)-p_0g(p_0)=0\,.\]
Now,
\[m'_s(p_0)=g^\circ_s(p_0)-g'_s(p_0)\im(p_1)=g^\circ_s(p_0)-2g'_s(p_0)I_1\]
cannot vanish, because we know that $|g^\circ_s(p_0)|\neq2|g'_s(p_0)|$. Thus,
\[Z(m)\cap C^-=\{p_0\}\,.\]
\end{example}

In the previous Example~\ref{ex:2points} we have excluded $I_0=-I$ because, in such a case, we would have had $p_0=\bar p$, $p_1=\bar p$ and a function $m(q)=g(q)*(q-\bar p)=(q-\bar p)*g(q)$ coinciding with the function $\ell(q)$ of the preceding Example~\ref{ex:1cap1point}.

As a further application of the results of the previous section, we can complete the characterizations of $Z(f*g)$ and $Z(f^s)$ initiated in Propositions~\ref{prop:zerosofproduct} and~\ref{prop:zerosofsymmetrization1}. Recall that, for $p$ and $\widetilde p$ belonging to the same $x+y\s$, we say that $q-\widetilde p$ divides $f(q)$ near $p$ if the following property holds: there exist a slice domain $\Lambda$ containing the connected component $C$ of $(x+y\s)\cap\Omega$ that includes $p$ and a slice regular function $h:\Lambda\to\hh$ such that
\[f(q)=(q-\widetilde p)*h(q)\]
in $\Lambda$. In such a case, we also say that $q-\widetilde p$ divides $f(q)$ near $C$.

\begin{proposition}\label{prop:zerosofproductandsymmetrization}
Let $\Omega$ be a slice domain in $\hh$ and let $f,g:\Omega\to\hh$ be slice regular functions. Fix an $S:=x+y\s$ intersecting $\Omega$ and a connected component $C$ of the intersection $S\cap\Omega$. 
\begin{enumerate}
\item $Z(f*g)$ is the union between $Z(f)$ and the set of points $p\in\Omega\setminus Z(f)$ such that $q-f(p)^{-1}pf(p)$ divides $g(q)$ near $p$.
\item $Z(f^s)$ intersects $C$ if, and only if, it includes it; this happens if, and only if, there exists $\widetilde p\in S$ such that $q-\widetilde p$ divides $f(q)$ near $C$.
\item There exists $\widetilde p\in S$ such that $q-\widetilde p$ divides $f(q)$ near $C$ if, and only if, there exists $\widehat p\in S$ such that $q-\widehat p$ divides $f^c(q)$ near $C$.
\end{enumerate}
\end{proposition}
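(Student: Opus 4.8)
The plan is to reduce all three items to a single reformulation of Theorem~\ref{thm:factorization}, combined with the partial descriptions already in Propositions~\ref{prop:zerosofproduct} and~\ref{prop:zerosofsymmetrization1}. The unifying tool is this: for $\widetilde p\in S$, a binomial $q-\widetilde p$ divides a slice regular $h:\Omega\to\hh$ near $C$ if and only if $h^\circ_s(q)=-\im(\widetilde p)\,h'_s(q)$ for all $q\in C$ (equivalently, since $h^\circ_s,h'_s$ are constant on $C$, for one point of $C$). Indeed, in the notation of Theorem~\ref{thm:factorization} the set $\mathcal O(\widetilde p)$ misses $C$ exactly when this identity holds, so that $C\subseteq\Omega\setminus\mathcal O(\widetilde p)$ and the existence clause gives the factorization near $C$; the converse is the non-existence clause read contrapositively. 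I will invoke this criterion four times: once for $g$, twice for $f$, and once for $f^c$. (When $y=0$ the sphere $S$ is the single real point $x$ and all three items collapse to Proposition~\ref{prop:realfactorization} together with $f^s(x)=|f(x)|^2$, so from now on assume $y>0$.)

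For item~1, I would start from Proposition~\ref{prop:zerosofproduct}: it already gives $Z(f)\subseteq Z(f*g)$ and identifies $Z(f*g)\setminus Z(f)$ with the set of $p=x+yI\in\Omega\setminus Z(f)$ such that $g^\circ_s(p)=-f(p)^{-1}\im(p)f(p)\,g'_s(p)$. Put $\widetilde p:=f(p)^{-1}pf(p)$. Conjugation by the nonzero quaternion $f(p)$ fixes the real part and the modulus of the imaginary part, so $\widetilde p\in S$ and $\im(\widetilde p)=f(p)^{-1}\im(p)f(p)$; hence the displayed condition reads exactly $g^\circ_s(p)=-\im(\widetilde p)\,g'_s(p)$, which by the criterion above means ``$q-\widetilde p$ divides $g(q)$ near $p$''. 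This settles item~1.

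For item~2, the equivalence ``$Z(f^s)$ meets $C$'' $\iff$ ``$Z(f^s)\supseteq C$'' is immediate from Proposition~\ref{prop:zeros} applied to the slice preserving function $f^s$ (Theorem~\ref{thm:operations}, item~3): for a slice preserving function the ``isolated zero in $C$'' alternative is excluded, so one zero in $C$ forces all of $C$. It remains to prove ``$C\subseteq Z(f^s)$'' $\iff$ ``some $q-\widetilde p$ with $\widetilde p\in S$ divides $f$ near $C$''. For ``$\Leftarrow$'', the criterion gives $f^\circ_s=-\im(\widetilde p)f'_s$ on $C$; substituting into the formula for $f^s$ in Definition~\ref{def:operations} makes the first two terms cancel (because $|\im(\widetilde p)|=|\im(q)|$ on $S$) and the last vanish (because $\im(\widetilde p)$ is purely imaginary), so $f^s\equiv0$ on $C$. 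For ``$\Rightarrow$'', fix $p=x+yI_0\in C$ and write $\alpha:=f^\circ_s(p)$, $\beta:=f'_s(p)$; splitting $f^s(p)=0$ into its real part and its $I_0$\nobreakdash-component gives $|\alpha|=y|\beta|$ and $\re(\alpha\bar\beta)=0$. If $\beta=0$ then $\alpha=0$, so $f$ vanishes on $C$ and $\widetilde p:=p$ works; if $\beta\neq0$, set $J:=-y^{-1}\alpha\beta^{-1}$, and the two scalar relations above yield $|J|=1$ and $\re(J)=0$, so $J\in\s$ and $\widetilde p:=x+yJ\in S$ satisfies $-\im(\widetilde p)f'_s(p)=\alpha\beta^{-1}\beta=\alpha=f^\circ_s(p)$; the criterion then shows $q-\widetilde p$ divides $f$ near $C$.

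For item~3, I would note that $(f^c)^s=f^c*(f^c)^c=f^c*f=f^s$ by Theorem~\ref{thm:operations}, hence $Z\big((f^c)^s\big)=Z(f^s)$; applying the equivalence proved in item~2 once to $f$ and once to $f^c$, both ``some $q-\widetilde p$ divides $f$ near $C$'' and ``some $q-\widehat p$ divides $f^c$ near $C$'' are equivalent to $C\subseteq Z(f^s)$, hence to each other. The main obstacle in the whole argument is the ``$\Rightarrow$'' step of item~2: one must recognize that the two scalar identities hidden in $f^s(p)=0$ are precisely what is needed to realize $-\alpha\beta^{-1}$ as a genuine unit of $\s$, thereby pinning down the point $\widetilde p\in S$ at which the binomial factor is supported — everything else is bookkeeping against the previously established results.
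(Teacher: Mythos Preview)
Your proposal is correct and follows essentially the same route as the paper: both reduce items~1 and~2 to the characterization from Theorem~\ref{thm:factorization} (that $q-\widetilde p$ divides $f$ near $C$ iff $f^\circ_s=-\im(\widetilde p)f'_s$ on $C$) combined with Propositions~\ref{prop:zerosofproduct} and~\ref{prop:zeros}, and both deduce item~3 from item~2 via $(f^c)^s=f^s$. The only noticeable difference is in the ``$\Leftarrow$'' half of item~2: the paper argues that if $f=(q-\widetilde p)*g$ near $C$ then $(q-\widetilde p)^s=(q-x)^2+y^2$ divides $f^s$ near $C$, while you substitute $f^\circ_s=-\im(\widetilde p)f'_s$ directly into the formula for $f^s$ in Definition~\ref{def:operations}; both are equally valid and equally short.
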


\begin{proof}
\begin{enumerate}
\item The thesis immediately follows from Proposition~\ref{prop:zerosofproduct} and Theorem~\ref{thm:factorization}.
\item If there exist $\widetilde p\in S$ such that $q-\widetilde p$ divides $f(q)$ near $C$, then $(q-\widetilde p)^s=(q-x)^2+y^2$ divides $f^s(q)$ near $C$, whence $f^s$ vanishes identically in $C$.\\
Conversely, let us suppose $f^s_{|_C}\equiv0$. If $y=0$, then $x+y\s=\{x\}=C$ and $f^s(x)=0$ implies $f(x)=0$. In this situation, Proposition~\ref{prop:realfactorization} guarantees that $q-x$ divides $f(q)$ globally in $\Omega$. Now suppose, instead, $y\neq0$. Then $(f^s)^\circ_s$ and $(f^s)'_s$ vanish identically in $C$. Let $b,c$ denote the constant values of $f^\circ_s$ and $yf'_s$ in $C$. By Definition~\ref{def:operations}, it follows that $|b|^2=|c|^2$ and $\re(b\bar c)=0$. Thus, there exists $I\in\s$ such that $b\bar c=I|c|^2$, whence $b=Ic$. If we set $\widetilde p=x-yI$, then $f^\circ_s(q)=-\im(\widetilde p)f'_s(q)$ for all $q\in C$. We can now apply Theorem~\ref{thm:factorization} to conclude that $q-\widetilde p$ divides $f(q)$ near $C$. This concludes our proof.
\item The thesis follows from property {\it 2.}, taking into account the equality $(f^c)^s=f^c*(f^c)^c=f^c*f=f^s$.\qedhere
\end{enumerate}
\end{proof}

We now show, with an example, that ``phantom'' zeros of $g$ may produce zeros of $f*g$ and of $g^s$.

\begin{example}
Let us refer to case {2.} in Example~\ref{ex:factorization}: $\widetilde p\in C^-\setminus\{\bar p\}$ and $q-\widetilde p$ divides $g(q)$ near $C^+$, but not near $C^-$. Moreover,
\[Z(g)\cap C^+=\emptyset,\quad Z(g)\cap C^-=\emptyset\,.\]
Let us consider the binomial $B(q)=q-\overline{\widetilde p}$, which has
\[Z(B)=\left\{\overline{\widetilde p}\right\}\,,\]
and the product $B*g(q)$. Then $(q-\widetilde p)^s=q^2+2q+5$ divides $B*g(q)$ near $C^+$. Thus,
\[Z(B*g)\supseteq C^+\,.\]
Let us now consider the symmetrization $g^s$: since  $q-\widetilde p$ divides $g(q)$ near $C^+$, it follows that
\[Z(g^s)\supseteq C^+\,,\]
despite the fact that $Z(g)\cap C^+=\emptyset$.
Finally, we observe that there exists $\widehat p\in -1+2\s$ such that $q-\widehat p$ divides $f^c(q)$ near $C^+$.
\end{example}

We now complete the terminology concerning multiplicities with the next result and definition. Notice that, for $p=x+yI,p'\in\hh$, it holds $(q-p)*(q-p')=(q-x)^2+y^2$ if, and only if, $p'=\bar p$.

\begin{theorem}\label{factorization}
Let $\Omega$ be a slice domain in $\hh$ and let $f:\Omega\to\hh$ be a slice regular function with $f\not\equiv0$. Fix an $S:=x+y\s$ (with $x,y\in\rr,y>0$) intersecting $\Omega$ and a connected component $C$ of the intersection $S\cap\Omega$. There exist $m \in \nn$ such that $[(q-x)^2+y^2]^m$ divides $f(q)$ near $C$ but $[(q-x)^2+y^2]^{m+1}$ does not. Moreover, for the function $h$ such that
\[f(q) = \big[(q-x)^2+y^2\big]^m h(q)\]
near $C$, there exist a number $n \in \nn$, points $p_1,\ldots,p_n\in S$ (with $p_i \neq\bar p_{i+1}$ for all $i \in \{1,\ldots,n-1\}$), and a slice regular function $g$ (with $g^s_{|_C}\not\equiv0$) such that
\[h(q) = (q-p_1)*(q-p_2)*\cdots*(q-p_n)*g(q)\]
near $C$.
\end{theorem}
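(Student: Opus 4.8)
The plan is to reduce everything to a classical factorization statement over a symmetric slice domain via the Local Extension Theorem (Theorem~\ref{localextension}), work there, and then pull the factorization back to a slice domain containing $C$. First I would fix $p_0\in C$ and apply Theorem~\ref{localextension} to obtain an extension triplet $\big(\widetilde f,N,\Lambda\big)$ for $f$ with $p_0\in\Lambda$; by shrinking $\Lambda$ we may assume $\Lambda$ is a slice domain intersecting $S$ exactly in a subset of $C$. Since $\widetilde f$ is slice regular on the symmetric slice domain $N$ and $\widetilde f\not\equiv0$ (otherwise $f_{|_\Lambda}\equiv0$ and then $f\equiv0$ by the Identity Principle~\ref{identity}), the classical theory over symmetric slice domains applies: by~\cite[Proposition 3.17 and related results]{librospringer} there is a maximal $m\in\nn$ such that $[(q-x)^2+y^2]^m$ divides $\widetilde f(q)$ in $N$, and the quotient $\widetilde h:N\to\hh$ admits a factorization $\widetilde h(q)=(q-p_1)*\cdots*(q-p_n)*\widetilde g(q)$ with $p_i\in S$, $\widetilde g^s$ not identically zero on $S$, and $p_i\neq\bar p_{i+1}$ (this normalization of consecutive factors is the standard one from~\cite[Chapter 3]{librospringer}). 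Restricting all these identities to $\Lambda$ and recalling $\widetilde f=f$ on $\Lambda$ gives the desired factorization of $f$ near $C$, with $h,g$ defined as the restrictions of $\widetilde h,\widetilde g$.

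The two points requiring care are the following. First, I must check that the integer $m$ produced this way is intrinsic to $f$ and $C$, i.e.\ that $[(q-x)^2+y^2]^m$ divides $f(q)$ near $C$ but $[(q-x)^2+y^2]^{m+1}$ does not, independently of the chosen extension triplet. This follows from the well-posedness argument already used for the classical multiplicity in the excerpt: any equality $f(q)=[(q-x)^2+y^2]^k*G(q)$ valid on a slice domain $\Lambda'$ restricts, on the slice $\Lambda'_I$ for any $I\in\s$ with $S_I\cap\Lambda'\neq\emptyset$, to $f_I(z)=(z-p)^k(z-\bar p)^k G_I(z)$ with $p=x+yI$; since $f_I$ is holomorphic and not identically zero (else $f\equiv0$), the order of vanishing of $f_I$ at $p$ (equivalently at $\bar p$) caps $k$, so the supremum of admissible $k$ is finite and well defined, and equals the $m$ extracted above. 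Second, I must ensure that after dividing out $[(q-x)^2+y^2]^m$ the remaining factor $h$ genuinely has the stated further factorization \emph{near $C$}: this is exactly the content of the classical result applied to $\widetilde h$ on $N$, so the only thing to verify is that $\widetilde g^s$ not vanishing identically on the symmetric sphere $S$ implies $g^s=(\widetilde g^s)_{|_C}$ not vanishing identically on $C$ — which holds because $f\not\equiv0$ forces, after extracting the maximal power of $(q-x)^2+y^2$, that $\widetilde g$ cannot vanish on all of $S$, and then Proposition~\ref{prop:zerosofsymmetrization1}(1) together with the maximality of $m$ prevents $\widetilde g^s$ from being identically zero on $S$; restricting to $C$ preserves this.

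The main obstacle I anticipate is bookkeeping rather than conceptual: one must be careful that the slice domain $\Lambda$ coming from Theorem~\ref{localextension} actually contains the whole connected component $C$, or else replace it by a larger slice domain still contained in $\Omega\cap N$ on which $\widetilde f=f$ holds. This can be arranged either by covering $C$ with finitely many extension triplets and using the Identity Principle~\ref{identity} to glue, or — more cleanly — by invoking the stronger Local Extension Theorem announced as Theorem~\ref{localextension2}, which is designed precisely to produce extension triplets whose slice-domain component can be taken to contain a prescribed compact path-connected piece of $\Omega_{I}$ such as $C$ (compare Lemma~\ref{gamma}). With such a triplet in hand, the pullback of the classical factorization is immediate and the normalization $p_i\neq\bar p_{i+1}$, the finiteness of $n$, and the non-vanishing of $g^s$ on $C$ are all inherited verbatim from the symmetric case.
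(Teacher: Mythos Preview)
Your route is genuinely different from the paper's. The paper never tries to find a single extension triplet whose $\Lambda$ contains all of $C$; instead it uses the already--proven intrinsic factorization tools: Theorem~\ref{thm:sphericalfactorization} is applied iteratively to extract the maximal power $[(q-x)^2+y^2]^m$ (termination via the slice restriction $f_I$, exactly as you outline), and then Proposition~\ref{prop:zerosofproductandsymmetrization}(2) together with Theorem~\ref{thm:factorization} is applied iteratively to peel off the linear factors $(q-p_k)$, with termination because otherwise $h^s$ would be divisible by $[(q-x)^2+y^2]^k$ for every $k$, forcing $h^s\equiv0$ and hence $h\equiv0$ by Proposition~\ref{prop:zerosofsymmetrization1}. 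The point is that Theorems~\ref{thm:sphericalfactorization} and~\ref{thm:factorization} already produce the quotients on explicit slice domains of the form $\Omega\setminus\mathcal{O}(S)$ or $\Omega\setminus\mathcal{O}(p)$, which automatically contain $C$; no single extension triplet covering $C$ is ever needed.

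There is a real gap in your plan, and it is not mere bookkeeping. Neither of your two proposed fixes delivers a slice domain $\Lambda\supseteq C$. First, Theorem~\ref{localextension2} controls the \emph{half-plane} direction: it gives $\Lambda\supset\overline{\mathcal{C}_J^+}$ for $J$ close to a fixed $J_0$, where $\mathcal{C}_J^+\subset L_J^+$. But $C$ is a cap on the sphere $S$, parametrized by $J$ ranging over a (possibly large) open subset of $\s$; the theorem only gives you a small cap $\{x+yJ:|J-J_0|\le\delta\}$, not all of $C$. Second, enlarging $\Lambda$ to the connected component $V$ of $\Omega\cap N$ containing $\Lambda$ does give $\widetilde f=f$ on $V$ (by real analyticity) and $C\subset V$, but there is no reason for the slices $V_I$ to be connected, so $V$ need not be a slice domain; and gluing finitely many extension triplets along $C$ has the same defect. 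If you want to repair your approach, the natural move is to define the quotient $g:=P^{-*}*f$ explicitly on $\Omega\setminus S$ (where $P$ is the product of all the factors you have extracted from $\widetilde f$) and then show, using an extension triplet at each point of $C$, that $g$ extends regularly across $C$; the resulting domain $\Omega\setminus\big((S\cap\Omega)\setminus C\big)$ is then a slice domain containing $C$. But this is exactly the mechanism behind Theorems~\ref{thm:sphericalfactorization} and~\ref{thm:factorization}, so once you carry it out you have essentially reproduced the paper's argument.
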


\begin{proof} 
Since $f \not \equiv 0$ in $\Omega$, by Theorem~\ref{thm:sphericalfactorization} there exists an $m \in \nn$ such that
\[f(q) = [(q-x)^2+y^2]^m h(q)\]
near $C$, for some slice regular $h$ which does not vanish identically in $C$. Suppose, indeed, it were possible to find, for all $k \in \nn$, a function $h^{[k]}(q)$ such that $f(q) = [(q-x)^2+y^2]^k h^{[k]}(q)$ in $C$. Then, choosing a point $x+yI\in C$, the restriction $f_I$ would have the factorization 
\[f_I(z) = [(z-x)^2+y^2]^k h^{[k]}_I(z) = [z-(x+yI)]^k[z-(x-yI)]^k h^{[k]}_I(z)\]
for all $k \in \nn$. This would imply $f_I \equiv 0$, whence $f \equiv 0$ by the Identity Principle \ref{identity}.

Now consider any slice regular function $h$ on a slice domain $\Lambda$ including $C$, with $h_{|_C}\not\equiv0$. Let us apply Proposition~\ref{prop:zerosofproductandsymmetrization} to $g^{[0]}:=h$: if the symmetrization of $g^{[0]}$ vanishes in $C$, then there exists $p_1\in S$ such that $h(q) = (q-p_1) * g^{[1]}(q)$ near $C$, for some slice regular $g^{[1]}$ with $g^{[1]}_{|_C}\not\equiv0$. If for all $k\in\nn$ there existed a $p_{k+1}\in S$ and a $g^{[k+1]}$ such that $g^{[k]}(q) = (q-p_{k+1})*g^{[k+1]}$ near $C$, then we would have
\[h(q)=(q-p_1)*\ldots*(q-p_k)*g^{[k]}(q)\]
near $C$ for all $k \in \nn$. This would imply, for the symmetrization $h^s$ of $h$, 
\[h^s(q)=[(q-x)^2+y^2]^k(g^{[k]})^s(q)\]
near $C$ for all $k\in\nn$. By the first part of the proof, this would imply $h^s\equiv0$. But then Proposition \ref{prop:zerosofsymmetrization1} would yield $h\equiv0$, a contradiction. Thus there exists an $n\in\nn$ such that the symmetrization of $g^{[n]}$ does not vanish in $C$. The thesis follows, setting $g:=g^{[n]}$.
\end{proof}

We notice that, in the previous statement, the numbers $m,n$ are uniquely determined. The points $p_1,\ldots,p_n$ are also uniquely determined, by~\cite[Proposition 3.23]{librospringer}. We are now in a position to give the next definition.

\begin{definition}\label{R-molt.perregolari}\index{multiplicity of a zero!spherical}\index{multiplicity of a zero!isolated}
Let $\Omega$ be a slice domain in $\hh$ and let $f:\Omega\to\hh$ be a slice regular function.

In the situation described in Theorem~\ref{factorization}, the number $2m$ is called the \emph{spherical multiplicity of $f$ at $C$} and the number $n$ is called the \emph{isolated multiplicity of $f$ at $p_1$, relative to $C$}. If, moreover, $S\cap\Omega$ has a unique connected component, then we also call $2m$ the \emph{spherical multiplicity of $f$ at $S\cap\Omega$} and call $n$ the \emph{isolated multiplicity of $f$ at $p_1$}.

For $x\in\Omega\cap\rr$, we define the \emph{isolated multiplicity of $f$ at $x$} to coincide with the classical multiplicity $m_f(x)$.
\end{definition}

The previous definition is consistent with~\cite[Definition 3.37]{librospringer}, which in turn derived from~\cite{zerosopen,milan}.

We conclude this section using factorization to characterize the set of points $p$ such that the real differential $df_p$ of a slice regular function $f$ at $p$ is singular. The corresponding result for symmetric slice domains is~\cite[Theorem 8.20]{librospringer} (from~\cite{ocs}). We follow here a different approach, analogous to that of~\cite{perottipersonal}.

\begin{theorem}
Let $\Omega$ be a slice domain in $\hh$ and let $f:\Omega\to\hh$ be a slice regular function. Fix $p\in\Omega$ and set $h:=f-f(p)$.
\begin{enumerate}
\item If $p\in\rr$, then $df_p$ is singular if, and only if, the Cullen derivative $f'_c(p)$ vanishes. This happens if, and only if, $m_h(p)\geq2$.
\item Suppose, instead, $p=x+yI$ with $x,y\in\rr$ and $y>0$ and let $C$ denote the connected component of $(x+y\s)\cap\Omega$ including $p$. Let us denote by $2m$ the spherical multiplicity of $h$ at $C$ and by $n$ the isolated multiplicity of $h$ at $p$, relative to $C$. Then $df_p$ is singular if, and only if, $2m+n\geq2$. In particular, the spherical derivative $f'_s(p)$ vanishes if, and only if, $2m\geq2$, while the Cullen derivative $f'_c(p)$ vanishes if, and only if, $m_h^C(p)\geq2$.
\end{enumerate}
\end{theorem}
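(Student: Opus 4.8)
The plan is to reduce everything to the local picture provided by Proposition~\ref{prop:differential}, which already expresses $df_p$ in terms of the Cullen and spherical derivatives, and then to translate the vanishing of those derivatives into divisibility statements about $h=f-f(p)$. Since $f$ and $h$ have the same real differential at $p$ (they differ by a constant) and $h(p)=0$, we may work with $h$ throughout. The key algebraic identity to exploit is that a factorization $h(q)=(q-p)*g(q)$ near $C$ produces, on the slice $L_I$, the honest holomorphic factorization $h_I(z)=(z-p)g_I(z)$, so that $m_h^C(p)$ is exactly the order of vanishing of $h_I$ at $p$; likewise $[(q-x)^2+y^2]^m$ dividing $h(q)$ near $C$ corresponds to $(z-p)^m(z-\bar p)^m$ dividing $h_I(z)$.

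First I would dispatch part {\it 1.} Here $p\in\rr$, so Proposition~\ref{prop:differential}{\it 1.} gives $df_p(v)=vf'_c(p)$ for all $v\in T_p\Omega\simeq\hh$, and this is singular iff $f'_c(p)=0$. Since $f'_c=h'_c$ and $h(p)=0$, the condition $f'_c(p)=0$ says precisely that the power series expansion of $h_I$ at $p$ (equivalently, of $h$ at the real point $p$) starts at order $\geq2$; by Proposition~\ref{prop:realfactorization} and the definition of classical multiplicity at a real point, this is exactly $m_h(p)\geq2$. (At a real point there is only one connected component, $\{p\}$ itself, so no ``relative to $C$'' is needed.)

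For part {\it 2.}, with $p=x+yI$, $y>0$: Proposition~\ref{prop:differential}{\it 2.} says $df_p$ is singular iff $f'_c(p)\,\overline{f'_s(p)}\in L_I^\perp$. I would first observe that this occurs exactly when $f'_c(p)=0$ or $f'_s(p)=0$. Indeed $f'_c(p)\in L_I$ (it is a complex derivative along the slice) and $f'_s(p)\in L_I$ as well, since $f'_s$ is locally constant on $(x+y\s)\cap\Omega$ and computed from $c(x+yJ,x+yK)$ — but more cleanly: $f(p)=f^\circ_s(p)+If'_s(p)$ with $f(p),I\in L_I$ forces $f^\circ_s(p),f'_s(p)\in L_I$, hence $h'_s(p)=f'_s(p)\in L_I$, and similarly $h^\circ_s(p)=f^\circ_s(p)-f(p)\in L_I$; and $h'_c(p)=f'_c(p)\in L_I$. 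So $f'_c(p)\overline{f'_s(p)}$ is a product of two elements of the \emph{commutative} field $L_I$, hence lies in $L_I$; it lies in $L_I^\perp$ iff it is $0$, i.e. iff $f'_c(p)=0$ or $f'_s(p)=0$. Next I would identify each vanishing with a multiplicity condition. By Proposition~\ref{prop:zeros}, $h'_s(p)=f'_s(p)=0$ is equivalent to $h$ vanishing identically on $C$, which by Proposition~\ref{prop:zerosofsymmetrization1} and Proposition~\ref{prop:zerosofproductandsymmetrization}{\it 2.} means $h^s_{|_C}\equiv0$, i.e. $[(q-x)^2+y^2]$ divides $h$ near $C$, i.e. $2m\geq2$ in the notation of Theorem~\ref{factorization}. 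For the Cullen derivative: restricting the factorization of Theorem~\ref{factorization} to $L_I$ gives $h_I(z)=(z-\bar p)^m(z-p)^m(z-p)(z-p_2)\cdots$ — more precisely, writing $\mu:=m_h^C(p)$ for the order of vanishing of $h_I$ at $p$, one has $h'_c(p)=0$ iff $\mu\geq2$, and I would check that $\mu=2m+(\text{number of }p_i\text{ equal to }p)$; since $p_1=p$ exactly when $f(p)=h(p)=0$ which holds here, and the uniqueness of the $p_i$ (invoked after Theorem~\ref{factorization}) pins down how many of them equal $p$, this yields $f'_c(p)=0\iff m_h^C(p)\geq2$. Combining, $df_p$ singular $\iff f'_c(p)=0$ or $f'_s(p)=0\iff m_h^C(p)\geq2$ or $2m\geq2$; and since $m_h^C(p)$ counts $p$ among $p_1,\dots,p_n$ together with the $2m$ spherical factors, one has $m_h^C(p)\geq1$ whenever $2m\geq1$, so ``$m_h^C(p)\geq2$ or $2m\geq2$'' is equivalent to ``$2m+n\geq2$'' (the delicate point being that $n\geq1$ as soon as $h(p)=0$, because then $q-p$ divides $h$ near $C$, so $2m+n\geq2$ whenever not both $m=0$ and $n\leq1$; a short case check on $m\in\{0,1,\dots\}$, $n\in\{0,1,\dots\}$ finishes it).

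The main obstacle I anticipate is the bookkeeping in the last equivalence: relating the ``classical'' multiplicity $m_h^C(p)$ (the power of $(q-p)^{*k}$ dividing $h$ near $C$) to the pair $(2m,n)$ from Theorem~\ref{factorization}, and confirming that the arithmetic identity $\{m_h^C(p)\geq2\}\cup\{2m\geq2\}=\{2m+n\geq2\}$ really holds. This requires knowing precisely that $p_1=p$ here (true since $h(p)=0$) and how multiplicities of the further binomials $(q-p_i)$ contribute to $m_h^C(p)$ when $p_i=p$ versus $p_i=\bar p$; the non-commutative identity $(q-p)*(q-\bar p)=(q-x)^2+y^2$ recorded just before Theorem~\ref{factorization}, together with the constraint $p_i\neq\bar p_{i+1}$, is exactly what controls this, and I would lean on it plus the uniqueness statement following Theorem~\ref{factorization} rather than re-proving anything. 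Everything else is a direct application of Proposition~\ref{prop:differential} and the factorization machinery of Sections~\ref{sec:factorization} and~\ref{sec:applications}.
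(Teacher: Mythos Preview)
Your treatment of part {\it 1.} is fine and matches the paper. Part {\it 2.}, however, rests on a false claim: you assert that $f'_c(p)$ and $f'_s(p)$ lie in $L_I$, so that their product lies in $L_I$ and hence is in $L_I^\perp$ only when one factor vanishes. But for a general slice regular $f:\Omega\to\hh$ these quantities are quaternion-valued, not $L_I$-valued; the argument ``$f(p)=f^\circ_s(p)+yIf'_s(p)$ with $f(p)\in L_I$'' already fails because $f(p)$ need not be in $L_I$, and for $h$ the single relation $h^\circ_s(p)=-yI\,h'_s(p)$ coming from $h(p)=0$ does not force either side into $L_I$. A concrete counterexample is $h(q)=(q-p)*(q-\widetilde p)$ with $\widetilde p=x+yJ\in x+y\s$ and $J\neq\pm I$: here $f'_c(p)=p-\widetilde p\neq0$ and $f'_s(p)=-y(I+J)\neq0$, yet $f'_c(p)\overline{f'_s(p)}=y^2(I-J)(I+J)=y^2(IJ-JI)\in L_I^\perp$, so $df_p$ is singular. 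In this example $2m=0$ and $n=2$, while $m_h^C(p)=1$; thus both your dichotomy ``$f'_c(p)=0$ or $f'_s(p)=0$'' and your arithmetic claim ``$\{m_h^C(p)\geq2\}\cup\{2m\geq2\}=\{2m+n\geq2\}$'' are refuted simultaneously.

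The paper's route avoids this trap. After factoring $h(q)=(q-p)*g(q)$ near $C$ via Theorem~\ref{thm:factorization}, one computes $f'_c(p)=g(p)$ and $f'_s(p)=g^\circ_s(p)-\im(p)\,g'_s(p)$, and then expands $f'_c(p)\overline{f'_s(p)}$ directly: its $L_I$-component turns out to equal $g^s(p)$, while its $L_I^\perp$-component is $2\im\big(g^\circ_s(p)\overline{g'_s(p)}\big)\times\im(p)$. Hence $df_p$ is singular iff $g^s(p)=0$, which by Proposition~\ref{prop:zerosofproductandsymmetrization} means some binomial $q-\widetilde p$ (with $\widetilde p$ an \emph{arbitrary} point of $x+y\s$, not just $p$ or $\bar p$) divides $g$ near $C$; equivalently $h$ admits a second linear factor beyond $(q-p)$, i.e., $2m+n\geq2$. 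The special choices $\widetilde p=p$ and $\widetilde p=\bar p$ then recover $f'_c(p)=0$ and $f'_s(p)=0$ respectively as the ``In particular'' clauses, not as the general criterion.
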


\begin{proof}
We remark that $h(p)=0$.
\begin{enumerate}
\item If $p\in\rr$, Proposition~\ref{prop:realfactorization} guarantees the existence of a slice regular $g:\Omega\to\hh$ such that
\[h(q)=(q-p)*g(q)\,.\]
Since $f$ and $h$ differ by an additive constant, it holds
\[f'_c(q)=h'_c(q)=g(q)+(q-p)*g'_c(q)\,,\]
whence $f'_c(p)=g(p)$. By Proposition~\ref{prop:differential}, $df_p$ is singular if, and only if, $f'_c(p)=g(p)$ equals zero. We can apply Proposition~\ref{prop:realfactorization} again to see that $g(p)=0$ if, and only if, there exists a slice regular function $\ell:\Omega\to\hh$ such that
\[g(q)=(q-p)*\ell(q)\,,\]
which is, in turn, equivalent to
\[h(q)=(q-p)^{*2}*\ell(q)\,.\]
\item Suppose we are in the opposite case. Theorem~\ref{thm:factorization} guarantees the existence of a slice domain $M$ containing $C\cup(\Omega\setminus(x+y\s))$ and of a slice regular function $g:M\to\hh$ such that
\[h(q)=(q-p)*g(q)\]
in $M$. For all $q\in M$, it holds
\begin{align*}
&f'_c(q)=h'_c(q)=g(q)+(q-p)*g'_c(q)\\
&f'_s(q)=h'_s(q)=g^\circ_s(q)+(\re(q)-p)g'_s(q)\,.
\end{align*}
In particular,
\begin{align*}
&f'_c(p)=g(p)\\
&f'_s(p)=g^\circ_s(p)-\im(p)g'_s(p)\,,
\end{align*}
whence
\begin{align*}
&f'_c(p)\overline{f'_s(p)}=g(p)\overline{(g^\circ_s(p)-\im(p)g'_s(p))}\\
&=\big(g^\circ_s(p)+\im(p)g'_s(p)\big)\big(\overline{g^\circ_s(p)}+\overline{g'_s(p)}\im(p)\big)\\
&=\big|g^\circ_s(p)\big|^2-\big|\im(p)\big|^2\big|g'_s(p)\big|^2+2\im\big(g^\circ_s(p)\overline{g'_s(p)}\im(p)\big)\\
&=(g^s)^\circ_s(p)+2\re\big(g^\circ_s(p)\overline{g'_s(p)}\big)\im(p)+2\im\big(g^\circ_s(p)\overline{g'_s(p)}\big)\times\im(p)\\
&=(g^s)^\circ_s(p)+\im(p)(g^s)'_s(p)+2\im\big(g^\circ_s(p)\overline{g'_s(p)}\big)\times\im(p)\\
&=\underbrace{g^s\big(p\big)}_{\in L_I}+\underbrace{2\im\big(g^\circ_s(p)\overline{g'_s(p)}\big)\times\im(p)}_{\in L_I^\perp}\,.
\end{align*}
\end{enumerate}
By Proposition~\ref{prop:differential}, $df_p$ is singular if, and only if, $f'_c(p)\overline{f'_s(p)}\in L_I^\perp$. This is, in turn, equivalent to $g^s(p)=0$ by the previous formula. By Proposition~\ref{prop:zerosofproductandsymmetrization}, $g^s(p)=0$ if, and only if, there exist $\widetilde p\in x+y\s$, a slice domain $\Omega'\subseteq M$ containing both $C$ and $\Omega\setminus(x+y\s)$, as well as a slice regular function $\ell:\Omega'\to\hh$ such that
\[g(q)=(q-\widetilde p)*\ell(q)\]
in $\Omega'$. This equality, in turn, is equivalent to
\[h(q)=(q-p)*(q-\widetilde p)*\ell(q)\,.\]
in $\Omega'$. Finally, we remark that $f'_c(p)=0$ if, and only if, $\widetilde p$ can be chosen to equal $p$, while $f'_s(p)=0$ if, and only if, $\widetilde p$ can be chosen to equal $\bar p$.
\end{proof}


\section{Singularities}\label{sec:singularities}

This section is devoted to classifying the singularities of slice regular functions on slice domains. The main tool is the possibility to expand into regular Laurent series, according to the next definition and results (from~\cite[\S 5.2]{librospringer}, which in turn derived from~\cite{poli,singularities}). Let the expressions $(q-p)^{*(-n)} = (q-p)^{-*n}$ denote the regular reciprocal of $(q-p)^{*n}$.

\begin{definition}\index{function $\tau$}
The functions $\sigma,\tau:\hh\times\hh\to\rr$ are defined by setting, for all $p,q \in \hh$,
\begin{align*}
\sigma(q,p) &:= \left\{
\begin{array}{l}
|q-p| \mathrm{\ if\ } p,q \mathrm{\ lie\ in\ the\ same\ plane\ } L_I\\
\omega(q,p) \mathrm{\ otherwise}
\end{array}
\right.\\
\tau(q,p) &:= \left\{
\begin{array}{l}
|q-p|  \mathrm{\ if\ } p,q \mathrm{\ lie\ in\ the\ same\ plane\ } L_I\\
\sqrt{\left[\re(q)-\re(p)\right]^2 + \left[|\im(q)| - |\im(p)|\right]^2}   \mathrm{\ otherwise}
\end{array}
\right.
\end{align*}
where
\[\omega(q,p) := \sqrt{\left[\re(q)-\re(p)\right]^2 + \left[|\im(q)| + |\im(p)|\right]^2}.\]
For $0\leq R_1 <R_2\leq +\infty$, the following sets are defined:
\begin{align*}
&\Sigma(p,R_2) := \{q \in \hh : \sigma(q,p) < R_2\}\\
&T(p,R_2) := \{q \in \hh : \tau(q,p) < R_2\}\\
&\Omega(p,R_2) := \{q \in \hh : \omega(q,p) <R_2\}\\
&\Sigma(p,R_1,R_2) := \{q \in \hh : \tau(q,p)>R_1, \sigma(q,p) <R_2\}\\
&\Omega(p,R_1,R_2) := \Omega(p,R_2)\setminus\overline{T(p,R_1)}\,,
\end{align*}
where $\overline{T(p,R_1)}$ denotes the closure of $T(p,R_1)$.
\end{definition}

\begin{theorem}
Choose any sequence $\{a_n\}_{n \in \zz}$ in $\hh$. Let $R_1,R_2 \in [0, +\infty]$ be such that $R_1 = \limsup_{m \to +\infty} |a_{-m}|^{1/m}, 1/R_2 = \limsup_{n \to +\infty} |a_n|^{1/n}$.
For all $p \in \hh$ the \emph{regular Laurent series centered at $p$} associated to $\{a_n\}_{n\in \zz}$, namely,
\[f(q) = \sum_{n \in \zz} (q-p)^{*n} a_n\,,\]
converges absolutely and uniformly on the compact subsets of $\Sigma(p,R_1,R_2)$ and it does not converge at any point of $T(p,R_1)$ nor at any point of $\hh \setminus \overline{\Sigma(p,R_2)}$. Furthermore: if $\Omega(p,R_1,R_2) \neq \emptyset$, then the sum of the series defines a slice regular function $f : \Omega(p,R_1,R_2) \to \hh$.
\end{theorem}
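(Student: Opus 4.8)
Write $p=x_0+y_0I_0$ with $x_0,y_0\in\rr$, $y_0\geq0$ and $I_0\in\s$. This is the statement of~\cite[\S 5.2]{librospringer}, and the plan is to reduce it to two-sided size estimates for the functions $(q-p)^{*n}$ and then apply the root test, the Weierstrass $M$-test, and the Weierstrass convergence theorem for holomorphic functions. For the estimates, observe that for $n\geq0$ the function $(q-p)^{*n}$ is a polynomial on $\hh$ whose restriction to $L_{I_0}$ is the ordinary power $(z-p)^n$; applying the Representation Formula~\ref{R-representationformula} on the symmetric slice domain $\hh$ with $J=I_0$, $K=-I_0$ writes $(q-p)^{*n}(x+yI)$ as a convex combination of $(x+yI_0-p)^n$ and $(x-yI_0-p)^n$ whose weights depend on $q$ only through the direction of $\im(q)$ relative to $I_0$. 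Since $|x+yI_0-p|=\tau(q,p)$, $|x-yI_0-p|=\sigma(q,p)$ and $\tau(q,p)\leq\sigma(q,p)$, this yields $\tau(q,p)^n\leq|(q-p)^{*n}(q)|\leq\sigma(q,p)^n$, together with a sharper lower bound $|(q-p)^{*n}(q)|\geq c(q)\,\tau(q,p)^n$ for some $c(q)>0$ away from the half-plane $L_{I_0}^+$ and, symmetrically, $|(q-p)^{*n}(q)|\geq c(q)\,\sigma(q,p)^n$ away from $L_{-I_0}^+$ (on the plane $L_{I_0}$ itself one has simply $|(q-p)^{*n}(q)|=|q-p|^n=\tau(q,p)^n=\sigma(q,p)^n$). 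For $n<0$, set $m=-n$: then $(q-p)^{*n}=(q-p)^{-*m}$ is slice regular on $\hh\setminus(x_0+y_0\s)$, and from $(q-p)^{-*m}=\big((q-p)^s\big)^{-*m}*(q-\bar p)^{*m}$ (Proposition~\ref{prop:quotient}, together with $((q-p)^{*m})^c=(q-\bar p)^{*m}$ and the slice-preserving character of $(q-p)^s$) and the identity $|(q-p)^s(q)|=\tau(q,p)\,\sigma(q,p)$ off $L_{I_0}$, the preceding bounds applied to the binomial $q-\bar p$ give in the same way $\sigma(q,p)^{n}\leq|(q-p)^{*n}(q)|\leq\tau(q,p)^{n}$, together with a lower bound $|(q-p)^{-*m}(q)|\geq c(q)\,\tau(q,p)^{-m}$ for all $q\notin x_0+y_0\s$.

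Granting these estimates, the convergence part follows at once. On a compact $K\subseteq\Sigma(p,R_1,R_2)$ one has $\rho_2:=\sup_K\sigma(\cdot,p)<R_2$ and $\rho_1:=\inf_K\tau(\cdot,p)>R_1$, so $|(q-p)^{*n}(q)\,a_n|\leq\rho_2^{\,n}|a_n|$ on $K$ for $n\geq0$ and $|(q-p)^{*n}(q)\,a_n|\leq\rho_1^{\,n}|a_n|$ on $K$ for $n<0$; both $\sum_{n\geq0}\rho_2^{\,n}|a_n|$ and $\sum_{m\geq1}\rho_1^{-m}|a_{-m}|$ converge by the root test, because $\limsup_n|a_n|^{1/n}=1/R_2<1/\rho_2$ and $\limsup_m|a_{-m}|^{1/m}=R_1<\rho_1$, so the Weierstrass $M$-test gives absolute and uniform convergence on $K$. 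For divergence, if $q\in T(p,R_1)$, i.e. $\tau(q,p)<R_1$, the lower bound of the previous paragraph yields $\limsup_m|(q-p)^{*(-m)}(q)\,a_{-m}|^{1/m}\geq R_1/\tau(q,p)>1$, so the general term does not tend to $0$ and the series diverges (at the points of $x_0+y_0\s$, where $\tau=0$, the negative-power functions are not even defined); symmetrically, if $q\notin\overline{\Sigma(p,R_2)}$, i.e. $\sigma(q,p)>R_2$, then $\limsup_n|(q-p)^{*n}(q)\,a_n|^{1/n}\geq\sigma(q,p)/R_2>1$ and the series diverges.

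Finally, when $\Omega(p,R_1,R_2)\neq\emptyset$ it is a slice domain contained in $\Sigma(p,R_1,R_2)$ (indeed $\sigma\leq\omega<R_2$ on $\Omega(p,R_2)$, while $\tau>R_1$ off $\overline{T(p,R_1)}$ because for $R_1>0$ the level set $\{\tau(\cdot,p)=R_1\}$ has no local minimum and hence lies in $\overline{T(p,R_1)}$; moreover $\Omega(p,R_1,R_2)\subseteq\hh\setminus(x_0+y_0\s)$ whenever some $a_{-m}\neq0$), so by the previous paragraph the series converges on it, uniformly on compacts. Each partial sum $\sum_{n=-N}^{N}(q-p)^{*n}a_n$ is a finite sum of slice regular functions, hence slice regular by Theorem~\ref{thm:operations}, and its restriction to any plane $L_I$ is holomorphic on $\Omega(p,R_1,R_2)_I$; by Weierstrass' theorem the uniform-on-compacts limit of these restrictions is holomorphic, so $f$ is slice regular on $\Omega(p,R_1,R_2)$ by Definition~\ref{sliceregular}. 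The main obstacle is the first step: extracting from the Representation Formula the precise dependence of the lower bounds for $|(q-p)^{*n}(q)|$ on the direction of $\im(q)$, so that they are non-degenerate exactly on the regions $\{\tau(\cdot,p)<R_1\}$ and $\{\sigma(\cdot,p)>R_2\}$ where divergence must be shown, and correctly handling the degenerate cases $R_1=0$ and $q\in x_0+y_0\s$; once the estimates are available, the rest is standard.
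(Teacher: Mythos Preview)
The paper does not supply a proof of this theorem: it is quoted from~\cite[\S 5.2]{librospringer} (in turn drawn from~\cite{poli,singularities} and, for the positive-power part, from~\cite{powerseries}) as a prerequisite for the study of singularities in Section~\ref{sec:singularities}. There is therefore no proof in the present paper to compare against.

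Your sketch is exactly the argument used in those original references. In fact the two-sided estimate you announce is sharper than you seem to realize: the Representation Formula with $J=I_0$, $K=-I_0$ gives, for $q=x+yI\notin L_{I_0}$ with $y>0$ and $a=\langle I,I_0\rangle$, the exact identity
\[
\big|(q-p)^{*n}\big|^2=\tfrac{1+a}{2}\,\tau(q,p)^{2n}+\tfrac{1-a}{2}\,\sigma(q,p)^{2n}\,,
\]
a genuine convex combination of $\tau^{2n}$ and $\sigma^{2n}$. This immediately yields both $\tau^n\le|(q-p)^{*n}|\le\sigma^n$ and the lower bound $|(q-p)^{*n}|\ge\sqrt{(1-a)/2}\,\sigma^n$ needed for divergence when $\sigma(q,p)>R_2$. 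The latter degenerates only when $a=1$, i.e.\ on $L_{I_0}^+$ (you have the half-planes swapped), but there $\sigma=|q-p|$ and the estimate is an equality. The analogous computation for $(q-\bar p)^{*m}$, combined with your identity $|(q-p)^s|=\tau\sigma$ off $L_{I_0}$, then gives the bounds for the negative powers. One small caveat: $\Omega(p,R_1,R_2)$ need not be a slice domain (it misses $\rr$ whenever $R_2\le y_0$), but Definition~\ref{sliceregular} requires only a domain, so your slicewise Weierstrass argument still applies.
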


\begin{theorem}[Regular Laurent Expansion]\label{laurent}
Let $\Omega$ be a domain in $\hh$, let $f:\Omega\to\hh$ be a slice regular function and let $p \in \hh$. There exists a sequence $\{a_n\}_{n \in \zz}$ in $\hh$ such that, for all $0\leq R_1<R_2\leq +\infty$ with $\Sigma(p,R_1,R_2)\subseteq\Omega$,
\begin{equation}\label{laurentformula}
f(q) = \sum_{n \in \zz} (q-p)^{*n} a_n
\end{equation}
in $\Sigma(p,R_1,R_2)$. If, moreover, $\Sigma(p,R_2)\subseteq\Omega$, then $a_n=0$ for all $n<0$ and equality~\eqref{laurentformula} holds throughout $\Sigma(p,R_2)$. 
\end{theorem}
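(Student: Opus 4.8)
The plan is to reduce to the regular Laurent theory over symmetric domains developed in~\cite[\S 5.2]{librospringer} --- the reference cited for this statement --- by restricting $f$ to the regions $\Sigma(p,R_1,R_2)$, and then to check that the coefficients one obtains do not depend on the pair $R_1,R_2$.

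First I would observe that each $\Sigma(p,R_1,R_2)$ is axially symmetric: both $\sigma(\,\cdot\,,p)$ and $\tau(\,\cdot\,,p)$ depend on their argument only through its real part and the modulus of its imaginary part, so every sublevel condition on them defines a symmetric set. Together with the companion regions $\Omega(p,R_1,R_2)$, these are exactly the sets on which~\cite[\S 5.2]{librospringer} builds the regular Laurent expansion. Hence, whenever $\Sigma(p,R_1,R_2)\subseteq\Omega$ is nonempty, the restriction of $f$ to it is a slice regular function on a symmetric region of that type, so it expands as $f(q)=\sum_{n\in\zz}(q-p)^{*n}a_n^{(R_1,R_2)}$ throughout $\Sigma(p,R_1,R_2)$; and when in addition $\Sigma(p,R_2)\subseteq\Omega$, the same source yields the power series expansion at $p$, whence $a_n^{(R_1,R_2)}=0$ for $n<0$ and convergence on all of $\Sigma(p,R_2)$. (If no admissible pair exists, the claim is vacuous and the zero sequence suffices.)

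The heart of the matter is independence of $a_n^{(R_1,R_2)}$ from the pair. I would pin the coefficients down by restricting the expansion to the plane $L_J$ containing $p$: since $(q-p)^{*n}|_{L_J}=(z-p)^n$, the slice Laurent series becomes the ordinary complex Laurent series $f_J(z)=\sum_{n\in\zz}(z-p)^n a_n^{(R_1,R_2)}$ on the annulus $\{R_1<|z-p|<R_2\}\subseteq\Omega_J$, so $a_n^{(R_1,R_2)}$ is just the $n$-th complex Laurent coefficient of $f_J$ there. When two admissible pairs give nested regions, their coefficient sequences therefore coincide, both expansions being valid on the smaller region. For the general case I would argue from connectedness: having fixed $\{a_n\}$ from one admissible pair, the difference $g:=f-\sum_{n\in\zz}(q-p)^{*n}a_n$ is slice regular on the intersection of $\Omega$ with the region of convergence of the series (whose radii are $\limsup|a_{-m}|^{1/m}$ and $1/\limsup|a_n|^{1/n}$, by the preceding theorem), it vanishes identically on the chosen $\Sigma(p,R_1,R_2)$, and the Identity Principle~\ref{identity} spreads this vanishing; iterating this over the family of admissible pairs forces every other coefficient sequence to equal $\{a_n\}$.

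The genuinely delicate step is this last one, and it is precisely where dropping symmetry costs something. For a non-symmetric $\Omega$, two admissible regions $\Sigma(p,R_1,R_2)$ and $\Sigma(p,R_1',R_2')$ need be neither nested nor overlapping, so the equality of their coefficient sequences is not a formality; it has to be extracted from the connectedness of $\Omega$ (and of its slices $\Omega_J$) together with the precise convergence/divergence behaviour of regular Laurent series recorded in the preceding theorem. Once independence is in hand, the two displayed assertions follow at once.
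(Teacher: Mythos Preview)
The paper does not prove this theorem; it is quoted from \cite[\S 5.2]{librospringer} as background for Section~\ref{sec:singularities}, so there is no argument here to compare against.

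On your attempt itself: the reduction to the symmetric theory is sound, since each $\Sigma(p,R_1,R_2)$ is axially symmetric and the expansion on it follows directly from the cited source. Your worry about the independence of $\{a_n^{(R_1,R_2)}\}$ from the pair is justified, but your proposed resolution does not work. The Identity Principle~\ref{identity} is stated for slice domains, whereas here $\Omega$ is only assumed to be a domain; neither $\Omega$ nor its intersection with a convergence region need be a slice domain, so you cannot invoke it. More seriously, the independence you are trying to establish is in fact false at the stated level of generality. Take $p=0$, $\Omega=\{q\in\hh:1<|q|<4\}\setminus\{5/2\}$, and $f(q)=(q-5/2)^{-1}$. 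Since $p\in\rr$, one has $\sigma(q,0)=\tau(q,0)=|q|$, so $\Sigma(0,1,2)=\{1<|q|<2\}$ and $\Sigma(0,3,4)=\{3<|q|<4\}$ are both contained in $\Omega$; yet the Laurent expansion of $f$ about $0$ on the first shell involves only nonnegative powers of $q$, while on the second it involves only negative powers. No connectedness argument can reconcile these two coefficient sequences. In the applications that follow in Section~\ref{sec:singularities} the theorem is only ever used on a single region $\Sigma(p,0,R)$ around a singularity, where the issue does not arise; you should read the ``for all $R_1,R_2$'' clause in that spirit rather than attempt to prove it literally.
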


We now give some new definitions and results. For the case of symmetric slice domains, see~\cite[\S 5.3]{librospringer} (derived from~\cite{poli,singularities}).

\begin{definition}
Let $f:\Omega\to\hh$ be a slice regular function and fix $p \in \hh$. We say that $p$ is a \emph{singularity} for $f$ if there exists $R>0$ such that $\Sigma(p,0,R) \subseteq\Omega$, whence the Laurent expansion of $f$ at $p$, $f(q) = \sum_{n \in \zz} (q-p)^{*n}a_n$, has $0$ as its inner radius of convergence and has a positive outer radius of convergence. Suppose this to be the case: we say that $p$ is a \emph{pole} for $f$ if there exists an $m\geq0$ such that $a_{-k} = 0$ for all $k>m$; the minimum such $m$ is called the \emph{order} of the pole and denoted by $ord_f(p)$. If $p$ is not a pole, then we call it an \emph{essential singularity} for $f$ and set $ord_f(p) = +\infty$. Additionally, we call $p$ a \emph{removable singularity} if $f$ extends to a neighborhood of $p$ as a slice regular function.
\end{definition}

Every removable singularity has order $0$, but the converse implication is false. In the next example, a connected component of $(x+y\s)\cap\Omega$ consists of nonremovable singularities, including one having order $0$. 

\begin{example}
Let us consider again the function $g$ of Example~\ref{ex:1point}, which was divisible by $q-p$ in the connected component $C^+$ of $(-1+2\s)\cap\Omega$ but not in the complementary connected component $C^-$. Let us define a function $h:\Omega\setminus(-1+2\s)\to\hh$ by setting
\[h(q):=(q-p)^{-*}*g(q)=(q^2+2q+5)^{-1}(q-\bar p)*g(q)\,.\]
Then every $\widetilde p=-1+2J\in C^+$ is a removable singularity for $h$ and $ord_h(\widetilde p)=0$. On the other hand, every $\widetilde p=-1+2J\in C^-$ is a nonremovable singularity for $h$. Moreover, $ord_h(\widetilde p)\geq1$ for all $\widetilde p\in C^-$ except $\bar p$: indeed, the equality $h_I(z)=(z-p)^{-1}g_I(z)$, valid for all $z\in\Omega_I\setminus\{\bar p\}$, yields that $ord_h(\bar p)=0$.
\end{example}

\begin{definition}\label{def:semiregular}
Let $\Omega$ be a slice domain in $\hh$. A function $f$ is \emph{semiregular} in $\Omega$ if it is slice regular in a slice domain $\Omega'\subseteq\Omega$ such that every point of $\Omega\setminus\Omega'$ is a pole (or a removable singularity) for $f$ and such that, for any $x,y\in\rr$ and for each connected component $C$ of $(x+y\s)\cap\Omega$,
the supremum $\sup_{p\in C}ord_f(p)$ is finite.
\end{definition}

Notice that, if $f$ is semiregular in $\Omega$ and if $\mathcal{S}$ is the set of its nonremovable poles, then, for all $I \in \s$: the slice $\mathcal{S}_I$ is discrete; the restriction $f_I$ is holomorphic in $\Omega_I \setminus \mathcal{S}_I$; and $f_I$ is meromorphic in $\Omega_I$. We point out that $\Omega\setminus\mathcal{S}$ is a slice domain. We will see in Section~\ref{sec:sphericalseries} (specifically, in Remark~\ref{rmk:sphericalorder}) that the hypothesis $\sup_{p\in C}ord_f(p)<+\infty$ is not restrictive if $\mathcal{S}$ is a cap within a $2$-sphere of the form $x+y\s$. 

\begin{theorem}\label{polefactorization1}
Let $\Omega$ be a slice domain in $\hh$ and let $f$ be semiregular in $\Omega$. Fix $p = x+yI \in \Omega$ and let $C$ denote the connected component of $(x+y\s)\cap\Omega$ that includes $p$. Let $m:=ord_f(p)$ and $n:=\max_{\widetilde p\in C}ord_f(\widetilde p)$. There exist a slice domain $\Lambda$ with $C\subseteq\Lambda\subseteq\Omega$ and a slice regular function $g : \Lambda \to \hh$ such that
\[f(q) = \left[(q-x)^2+y^2\right]^{-n} (q-p)^{*(n-m)}* g(q)\]
in $\Lambda\setminus(x+y\s)$. Moreover, if $n>0$ then $g(p)\neq0$.
\end{theorem}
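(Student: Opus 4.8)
The plan is to multiply $f$ by the slice preserving factor $[(q-x)^2+y^2]^n=\big((q-p)*(q-\bar p)\big)^n$, to extend the resulting function across the cap $C$ to an honest slice regular function on a slice domain $\Lambda\supseteq C$, to peel $(q-p)^{*(n-m)}$ off that extension with the zero-factorization theorems, and finally to divide back by $[(q-x)^2+y^2]^n$ away from $S:=x+y\s$. Observe first that $p\in C$, so $m=ord_f(p)\le n$ and $n-m\ge 0$; that $[(q-x)^2+y^2]$ is the symmetrization of $q-p$, hence slice preserving, so $[(q-x)^2+y^2]^{*n}=[(q-x)^2+y^2]^n$ and, by Proposition~\ref{prop:quotient}, off $S$ its regular reciprocal is the ordinary reciprocal; and that, by Theorem~\ref{thm:operations}(1), slice preserving functions are central and act by ordinary multiplication. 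Hence the asserted identity on $\Lambda\setminus S$ is equivalent to $[(q-x)^2+y^2]^n f(q)=(q-p)^{*(n-m)}*g(q)$ there, and everything reduces to constructing $\Lambda$ and $g$.

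Set $\Omega_0:=\Omega\setminus\mathcal{S}$, the slice domain on which $f$ is slice regular ($\mathcal S$ the set of nonremovable poles), so $F:=[(q-x)^2+y^2]^n f$ is slice regular on $\Omega_0$. For $\widetilde p=x+yJ\in C$, restricting the regular Laurent expansion of $f$ at $\widetilde p$ (Theorem~\ref{laurent}) to $L_J$ shows that $f_J$ has a pole of order exactly $ord_f(\widetilde p)\le n$ at $\widetilde p$, whereas $[(z-x)^2+y^2]^n=(z-\widetilde p)^n(z-\bar{\widetilde p})^n$ has a zero of order exactly $n$ at $\widetilde p$ (as $\bar{\widetilde p}\ne\widetilde p$); thus $F_J$ extends holomorphically across $\widetilde p$ with a zero of order $n-ord_f(\widetilde p)$ there. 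I would then take $\Lambda$ to be $\Omega_0$ together with a thin tube around $C$ obtained by adjoining, for each $\widetilde p=x+yJ\in C$, a small disk of $L_J$ centred at $\widetilde p$, the radii chosen small enough (shrinking towards the relative boundary of $C$) that $\Lambda$ is open, $\Lambda\subseteq\Omega$, and $\Lambda\setminus S\subseteq\Omega_0$ — possible because each $\mathcal S_J$ is discrete. A routine check shows that $\Lambda$ is a slice domain with $C\subseteq\Lambda\subseteq\Omega$, that $C$ is a connected component of $(x+y\s)\cap\Lambda$, and that the slice-wise holomorphic extensions of $F$ glue, via the Identity Principle~\ref{identity}, to a slice regular function $\widehat F:\Lambda\to\hh$ agreeing with $F$ on $\Lambda\setminus S$. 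I expect this extension step — and in particular the bookkeeping making $\Lambda$ a genuine slice domain with $\Lambda\setminus S$ pole-free — to be the part requiring the most care.

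Next, on $L_I$ one has $\widehat F_I(z)=(z-p)^n(z-\bar p)^n f_I(z)$ with $f_I$ having a pole of exact order $m$ at $p$, so $\widehat F_I$ has a zero of exact order $n-m$ at $p$. Moreover the restriction of $\widehat F$ to $C$ is a slice function that is not identically zero, since $n=\max_{\widetilde p\in C}ord_f(\widetilde p)$ is attained at some $\widetilde p$ where then $\widehat F(\widetilde p)\ne 0$. If $n>m$, this restriction vanishes at $p$, and a nonzero slice function on $C$ vanishing at the single point $p=x+yI$ automatically satisfies $\widehat F^\circ_s\equiv-\im(p)\widehat F'_s$ on $C$; hence Theorem~\ref{thm:factorization} makes $q-p$ a left divisor of $\widehat F$ near $C$. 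Iterating $n-m$ times — at the $k$-th stage the remaining factor $\widehat F^{[k]}$ is again a nonzero slice function on $C$ (were $\widehat F^{[k]}$ identically zero on $C$, Theorem~\ref{thm:sphericalfactorization} together with centrality of $[(q-x)^2+y^2]$ would make $[(q-x)^2+y^2]$ a divisor of $\widehat F$ near $C$, forcing $\widehat F(\widetilde p)=0$ at the point where $ord_f(\widetilde p)=n$, a contradiction) vanishing only at $p$, because $(\widehat F^{[k]})_I$ has a zero of exact order $n-m-k\ge 1$ at $p$ — produces a slice domain still containing $C$ and a slice regular $g$ on it with $\widehat F=(q-p)^{*(n-m)}*g$ near $C$; restricting to $L_I$ gives $\widehat F_I(z)=(z-p)^{n-m}g_I(z)$ with $g_I(p)\ne 0$ by exactness of the zero order, so $g(p)=g_I(p)\ne 0$. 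When $n=m$ there is nothing to peel and $g=\widehat F$; if moreover $n>0$ then $g(p)=\widehat F_I(p)\ne 0$ because the leading Laurent coefficient of $f_I$ at $p$ is nonzero, while for $n=0$ one has $m=0$, $g=\widehat F=f$, and no nonvanishing is claimed.

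Finally, relabelling this last slice domain as $\Lambda$, on $\Lambda\setminus S$ we have $\widehat F=[(q-x)^2+y^2]^n f$, so multiplying the identity $[(q-x)^2+y^2]^n f=(q-p)^{*(n-m)}*g$ on the left by the slice preserving function $[(q-x)^2+y^2]^{-n}$ — which, by the first paragraph, is ordinary division — yields $f(q)=[(q-x)^2+y^2]^{-n}(q-p)^{*(n-m)}*g(q)$ throughout $\Lambda\setminus S$, as required.
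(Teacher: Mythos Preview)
Your proof is correct and follows essentially the same strategy as the paper's: multiply by $[(q-x)^2+y^2]^n$, extend the product across $C$ slicewise, then factor out $(q-p)^{*(n-m)}$. The paper streamlines your tube construction by first assuming without loss of generality that all nonremovable poles of $f$ already lie in $C$, so that the extended function $h$ is automatically slice regular on all of $\Omega$ by Definition~\ref{sliceregular} (each $h_J$ extends holomorphically, and slice regularity is a slicewise condition); and where the paper asserts in one line that $(z-p)^{n-m}\mid h_I(z)$ implies $(q-p)^{*(n-m)}$ divides $h(q)$ near $C$, your explicit iteration of Theorem~\ref{thm:factorization} --- using that $\widehat F^{[k]}(p)=0$ forces $(\widehat F^{[k]})^\circ_s\equiv-\im(p)(\widehat F^{[k]})'_s$ on all of $C$ by constancy on the cap, hence $C\cap\mathcal O(p)=\emptyset$ at every stage --- is exactly the right way to unpack that step.
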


\begin{proof}
Let $\mathcal{S}$ denote the set of the nonremovable poles of $f$. If $C\subseteq\Omega\setminus\mathcal{S}$, then $m=0=n$ and the thesis immediately follows. Let us, therefore, suppose $C$ to intersect $\mathcal{S}$. We assume, without loss of generality, $\mathcal{S}\subseteq C$.

We can define a slice regular function $h:\Omega\setminus C\to\hh$ by setting $h(q):=\left[(q-x)^2+y^2\right]^nf(q)$. Every $\widetilde p=x+yJ\in C$ is a singularity for $h$ with $ord_f(\widetilde p)\leq n$. Since $h_J(z)=\big(z-\widetilde p\big)^n\big(z-\overline{\widetilde p}\big)^nf_J(z)$ for all $z\in\Omega_J\setminus\{\widetilde p\}$, it follows that $h_J$ extends to a holomorphic function on $\Omega_J$. We conclude that $h$ extends to a slice regular function on $\Omega$.

Finally, since $ord_f(p)=m$ and $h_I(z)=(z-p)^n(z-\bar p)^nf_I(z)$ for all $z\in\Omega_I$, it follows that $(z-p)^{n-m}$ divides $h_I(z)$. As a consequence, $(q-p)^{*(n-m)}$ divides $h(q)$ near $C$ and the thesis follows.
\end{proof}

\begin{proposition}\label{prop:algebraofsemiregular}
The set of semiregular functions on a slice domain $\Omega$ is a real $*$-algebra with respect to $+,*,^c$. Moreover, it is a division ring.
\end{proposition}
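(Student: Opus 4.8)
The plan is to reduce every assertion to the already-established slice regular case by localising away from poles. For a function $f$ that is semiregular in $\Omega$ write $\mathcal{S}_f$ for its set of nonremovable poles; recall that $\mathcal{S}_f$ is discrete in every slice $L_I$, that $\Omega\setminus\mathcal{S}_f$ is a slice domain, and that $f$ is slice regular there. Given two semiregular functions $f,g$, the set $\Omega_0:=\Omega\setminus(\mathcal{S}_f\cup\mathcal{S}_g)$ is again a slice domain, so by Theorem~\ref{thm:operations} $f+g,f*g$ are slice regular on $\Omega_0$ and $f^c$ is slice regular on $\Omega\setminus\mathcal{S}_f$. To see that these extend to semiregular functions on all of $\Omega$, I would work near each connected component $C$ of each sphere $(x+y\s)\cap\Omega$ meeting $\mathcal{S}_f\cup\mathcal{S}_g$: Theorem~\ref{polefactorization1} lets one write $f(q)=[(q-x)^2+y^2]^{-a}(q-p)^{*b}*\phi(q)$ and $g(q)=[(q-x)^2+y^2]^{-c}(q-p)^{*d}*\psi(q)$ near $C$, with the same $p=x+yI\in C$ and $\phi,\psi$ slice regular near $C$. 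Since $[(q-x)^2+y^2]$ has real coefficients it is slice preserving, hence $*$-central, and its (possibly negative) powers commute with every factor; therefore near $C$
\[
f+g=[(q-x)^2+y^2]^{-\max(a,c)}\cdot(\text{slice regular}),\qquad f*g=[(q-x)^2+y^2]^{-(a+c)}\,(q-p)^{*b}*\phi*(q-p)^{*d}*\psi,
\]
while, using $(u*v)^c=v^c*u^c$ and $((q-p)^{*b})^c=(q-\bar p)^{*b}$, one gets $f^c=[(q-x)^2+y^2]^{-a}\,\phi^c*(q-\bar p)^{*b}$. Each expression exhibits every point of $C$ as a pole (or removable singularity) of bounded order, the bound being locally uniform along $C$; since $\mathcal{S}_{f+g},\mathcal{S}_{f*g}\subseteq\mathcal{S}_f\cup\mathcal{S}_g$ are discrete in every slice, $f+g,f*g,f^c$ are semiregular in $\Omega$.

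For the algebraic axioms I would first record a uniqueness remark: two semiregular functions on $\Omega$ that coincide on some nonempty slice subdomain are equal. Indeed their slice regular restrictions then coincide on a common slice subdomain, hence on their whole common slice domain by the Identity Principle~\ref{identity}; consequently their regular Laurent expansions (Theorem~\ref{laurent}) agree at every point, so their pole sets, orders and coefficients all agree. Now associativity and bilinearity of $*$, distributivity, $(f^c)^c=f$, $(f*g)^c=g^c*f^c$, and the fact that $^c$ fixes real constants, are all identities between semiregular functions (semiregular by the previous paragraph) that hold on $\Omega$ minus the union of the relevant pole sets by Theorem~\ref{thm:operations}; hence they hold on all of $\Omega$. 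Together with pointwise addition and real scalar multiplication, and with the constant $1$ as unit, this makes the set of semiregular functions in $\Omega$ a real $^*$-algebra.

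It remains to construct a two-sided $*$-inverse for each $f\not\equiv0$. On the slice domain $\Omega\setminus\mathcal{S}_f$ the function $f$ is slice regular and not identically zero, so $f^s\not\equiv0$ by Proposition~\ref{prop:zerosofsymmetrization1}, and Proposition~\ref{prop:quotient} gives a slice regular $f^{-*}$ on $(\Omega\setminus\mathcal{S}_f)\setminus Z(f^s)$ with $f*f^{-*}=f^{-*}*f\equiv1$. To extend $f^{-*}$ to a semiregular function on $\Omega$, I would localise near each connected component $C$ of each $(x+y\s)\cap\Omega$: factoring out first the poles (Theorem~\ref{polefactorization1}) and then the residual zeros (Theorem~\ref{factorization}) one writes, near $C$,
\[
f(q)=[(q-x)^2+y^2]^{k}\,(q-p_1)*\cdots*(q-p_n)*u(q),\qquad k\in\zz,
\]
with $p_1,\dots,p_n\in x+y\s$ and $u$ slice regular near $C$ satisfying $u^s_{|C}\not\equiv0$, so that $u^{-*}$ is slice regular near $C$ by Proposition~\ref{prop:quotient}. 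Reversing the order of the factors and inverting each one (and using centrality of $[(q-x)^2+y^2]^{k}$) yields
\[
f^{-*}(q)=u^{-*}(q)*(q-p_n)^{-*}*\cdots*(q-p_1)^{-*}*[(q-x)^2+y^2]^{-k}
\]
near $C$, an expression in which every point of $C$ is a pole of order at most $n+|k|$ (or removable). These local extensions all agree with the slice regular $f^{-*}$ on $(\Omega\setminus\mathcal{S}_f)\setminus Z(f^s)$, hence patch to a single semiregular $f^{-*}$ on $\Omega$; the identities $f*f^{-*}=f^{-*}*f\equiv1$ then hold on $\Omega$ by the uniqueness remark. Thus every nonzero element is a two-sided unit and the $^*$-algebra is a division ring (in particular it has no zero divisors, consistently with the remark following Proposition~\ref{prop:quotient}).

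The main obstacle will be the bookkeeping in the last two displays: one must keep track of which factors are $*$-central, keep the binomial reciprocals in the correct reversed order, and verify that the pole orders so produced remain bounded on each connected component so that the resulting functions are genuinely semiregular. Checking that $\Omega$ remains a slice domain after removing the various subsets that are discrete in each slice (so that the Identity Principle and Proposition~\ref{prop:quotient} apply) is routine but should be stated explicitly.
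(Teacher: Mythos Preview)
Your proposal is correct and follows essentially the same strategy as the paper: localise near each connected component $C$ of $(x+y\s)\cap\Omega$, use Theorem~\ref{polefactorization1} (and, for the reciprocal, Theorem~\ref{factorization}) to exhibit explicit factorisations, and read off uniform bounds on the pole orders along $C$. The only tactical difference is in the treatment of $f^{-*}$: the paper splits into the two cases $p\in\mathcal{S}_f$ and $p\in Z(f^s)$ and, in the second case, inverts $f^s$ rather than $f$ (writing $f^{-*}=(f^s)^{-*}*f^c=[(q-x_0)^2+y_0^2]^{-k}h^{-*}*f^c$), whereas you invert the full factorisation of $f$ directly; both routes give the required order bound, and your version is essentially an inline anticipation of Theorem~\ref{R-transfactorization}. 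One cosmetic point: your stated bound ``order at most $n+|k|$'' is off by a factor of two, since each $(q-p_i)^{-*}=[(q-x)^2+y^2]^{-1}(q-\bar p_i)$ contributes a full spherical factor; collecting the central pieces gives $f^{-*}=[(q-x)^2+y^2]^{-(n+k)}\,u^{-*}*(q-\bar p_n)*\cdots*(q-\bar p_1)$, hence order at most $2\max(n+k,0)$ on $C$---but this does not affect the argument.
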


\begin{proof}
Let $f,g$ be two semiregular functions on $\Omega$ and let $\mathcal{S}_f, \mathcal{S}_g$ denote the sets of nonremovable singularities of $f,g$ (respectively). 

The function $f$ and its regular conjugate $f^c$ are slice regular in $\Omega\setminus\mathcal{S}_f$. If, moreover, $f \not \equiv 0$, then the regular reciprocal $f^{-*}$ is defined (and slice regular) on $\Omega \setminus (\mathcal{S}_f \cup Z(f^s))$. Now, $f^c$ and $f^{-*}$ (if the latter is defined) are semiregular in $\Omega$ by the following reasoning.
\begin{itemize}
\item For any $p=x+yI\in\mathcal{S}_f$, we can apply Theorem~\ref{polefactorization1} to find natural numbers $m,n$ with $m\leq n$, a slice domain $\Lambda$ with $p\in\Lambda\subseteq\Omega$ and a slice regular function $g : \Lambda \to \hh$ with $g(p)\neq0$ such that
\[f(q)=\left[(q-x)^2+y^2\right]^{-n}(q-p)^{*(n-m)}*g(q)\]
in $\Lambda\setminus(x+y\s)$. The equality
\[f^c(q)=\left[(q-x)^2+y^2\right]^{-n}*g^c(q)*(q-\bar p)^{*(n-m)}\,,\]
valid in $\Lambda\setminus(x+y\s)$, implies that $ord_{f^c}(p)\leq n$. Since $m\leq n$, the chain of equalities
\begin{align*}
f^{-*}(q)&=\left[(q-x)^2+y^2\right]^n*g^{-*}(q)*(q-p)^{*(m-n)}\\
&=\left[(q-x)^2+y^2\right]^m*g^{-*}(q)*(q-\bar p)^{*(n-m)}\,,
\end{align*}
valid in $\Lambda\setminus(x+y\s)$, implies that $ord_{f^{-*}}(p)=0$.
\item For $p_0=x_0+y_0I_0\in Z(f^s)$, let $C$ denote the connected component of $(x_0+y_0\s)\cap\Omega$ that includes $p_0$. By Theorem~\ref{factorization}, there exist $k\in\nn$ and a slice preserving regular function $h$ (which does not vanish in $C$) such that
\[f^s(q)=\left[(q-x_0)^2+y_0^2\right]^kh(q)\]
near $C$. The equality
\begin{align*}
f^{-*}(q)&=(f^s)^{-*}*f^c(q)\\
&=\left[(q-x_0)^2+y_0^2\right]^{-k}h^{-*}*f^c(q)\,,
\end{align*}
valid in a deleted neighborhood of $C$, implies that $ord_{f^{-*}}(p_0)\leq k$.
\end{itemize}

The sum $f+g$ and the regular product $f*g$ are defined (and slice regular) on the largest slice domain in which both $f$ and $g$ are slice regular, namely $\Omega \setminus (\mathcal{S}_f \cup \mathcal{S}_g)$. Reasoning as before, we find that $f+g$ and $f*g$ are semiregular in $\Omega$.

The set of semiregular functions on $\Omega$ is a real $*$-algebra and a division ring as a consequence of the analogous properties of the set of slice regular functions on an arbitrary slice domain $\Lambda$.
\end{proof}

\begin{theorem}\label{R-transfactorization}
Let $\Omega$ be a slice domain in $\hh$, let $f$ be semiregular in $\Omega$ and suppose $f \not \equiv 0$. Fix an $S:=x+y\s$ intersecting $\Omega$ and a connected component $C$ of $S\cap\Omega$. There exist $m \in \zz, n \in \nn$, $p_1,...,p_n \in S$ (with $p_i \neq \bar p_{i+1}$ for all $i \in \{1,\ldots, n-1\}$) such that
\begin{equation}
f(q) = [(q-x)^2+y^2]^m (q-p_1)*(q-p_2)*...*(q-p_n)*g(q)
\end{equation}
in $\Omega$, for some semiregular function $g$ on $\Omega$ that is slice regular near $C$ and has $g^s_{|_C}\not\equiv0$.
\end{theorem}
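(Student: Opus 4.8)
The plan is to reduce to the slice regular factorization already established in Theorem~\ref{factorization} by first clearing the poles of $f$ that lie on the component $C$, and then to upgrade the resulting local identity to a global one on $\Omega$ by expressing the factor $g$ through regular reciprocals and invoking the Identity Principle. We may assume $y>0$; the case $y=0$ (so $S=\{x\}=C$) follows directly from the regular Laurent expansion of $f$ at the real point $x$ (Theorem~\ref{laurent}), whose leading term is a power of $q-x$. Let $\mathcal S$ denote the set of nonremovable poles of $f$ and put $n_0:=\max_{\widetilde p\in C}ord_f(\widetilde p)$, which is finite because $f$ is semiregular.

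First I would pass to $F:=[(q-x)^2+y^2]^{n_0}*f$. By Proposition~\ref{prop:algebraofsemiregular}, $F$ is semiregular on $\Omega$; moreover, exactly as in the proof of Theorem~\ref{polefactorization1}, for each $\widetilde p=x+yJ\in C$ the identity $F_J(z)=(z-\widetilde p)^{n_0}(z-\overline{\widetilde p})^{n_0}f_J(z)$ shows that $F_J$ extends holomorphically through $\widetilde p$ and $\overline{\widetilde p}$, so that the nonremovable pole set $\mathcal S_F$ of $F$ is disjoint from $C$. Since $\Omega\setminus\mathcal S_F$ is a slice domain containing $C$ (remark after Definition~\ref{def:semiregular}), $F$ is slice regular near $C$, and $C$ is a connected component of $S\cap(\Omega\setminus\mathcal S_F)$. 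As $F\not\equiv0$, Theorem~\ref{factorization} now provides $m'\in\nn$, points $p_1,\dots,p_n\in S$ with $p_i\neq\bar p_{i+1}$, a slice domain $\Lambda$ with $C\subseteq\Lambda\subseteq\Omega\setminus\mathcal S_F$, and a slice regular $g_0:\Lambda\to\hh$ with $(g_0^s)_{|_C}\not\equiv0$ such that $F(q)=[(q-x)^2+y^2]^{m'}(q-p_1)*\cdots*(q-p_n)*g_0(q)$ near $C$. Because $[(q-x)^2+y^2]^{n_0}$ is slice preserving (hence central for $*$) with slice preserving regular reciprocal, cancelling it gives, with $m:=m'-n_0\in\zz$,
\[f(q)=[(q-x)^2+y^2]^{m}(q-p_1)*\cdots*(q-p_n)*g_0(q)\]
on $\Lambda\setminus S$.

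Next I would produce $g$ globally. Each $q-p_i$ is slice regular on $\hh$, and $(q-p_i)^{-*}=[(q-x)^2+y^2]^{-1}(q-\bar p_i)$ has at most a simple pole on $S$, hence is semiregular on $\Omega$; therefore, by Proposition~\ref{prop:algebraofsemiregular},
\[g:=(q-p_n)^{-*}*\cdots*(q-p_1)^{-*}*[(q-x)^2+y^2]^{-m}*f\]
is semiregular on $\Omega$. Using associativity of $*$, the centrality of $[(q-x)^2+y^2]^{\pm m}$, and the telescoping identity $(q-p_1)*\cdots*(q-p_n)*(q-p_n)^{-*}*\cdots*(q-p_1)^{-*}\equiv1$, one checks that on $\Lambda\setminus S$ we have $g=g_0$ and $[(q-x)^2+y^2]^{m}(q-p_1)*\cdots*(q-p_n)*g=f$. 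Since $g$ coincides on $\Lambda\setminus S$ with the function $g_0$ that is slice regular on all of $\Lambda$, every singularity of $g$ inside $\Lambda$ is removable, so $g$ is slice regular near $C$ and $g^s_{|_C}=(g_0^s)_{|_C}\not\equiv0$. Finally, $f$ and $[(q-x)^2+y^2]^{m}(q-p_1)*\cdots*(q-p_n)*g$ are semiregular on $\Omega$ and agree on the nonempty open set $\Lambda\setminus S$; by the Identity Principle~\ref{identity} applied on a common slice of regularity, together with the matching of Laurent tails at each pole, they agree throughout $\Omega$, which is the asserted factorization.

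The routine parts are the explicit regular-product formulas and the telescoping of binomials against their regular reciprocals, all consequences of the $*$-algebra structure of Section~\ref{sec:algebraic} and Proposition~\ref{prop:algebraofsemiregular}. The main obstacle I anticipate is the final bookkeeping: confirming that the globally defined $g$ (built from regular reciprocals) has no genuine poles near $C$ and really coincides there with the local factor $g_0$ furnished by Theorem~\ref{factorization}, and that the factorization identity---obtained a priori only on a deleted neighbourhood of $C$---propagates to all of $\Omega$ at the level of semiregular functions.
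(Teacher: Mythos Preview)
Your proposal is correct and follows essentially the same route as the paper's proof: clear the poles near $C$ by multiplying by $[(q-x)^2+y^2]^{n_0}$, apply Theorem~\ref{factorization} to the resulting slice regular function, and then globalize the factor $g$ via regular reciprocals using Proposition~\ref{prop:algebraofsemiregular}. The paper is slightly terser---it defines the global $g$ directly as $R^{-*}*h$ without separately invoking the Identity Principle for the final identity---but the substance is identical.
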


\begin{proof}
Let $M:=\max_{\widetilde p\in C}ord_f(\widetilde p)$. The previous proposition allows us to define a semiregular $h$ on $\Omega$ by setting
\[h(q):=[(q-x)^2+y^2]^Mf(q)\,.\]
Moreover, $h$ is slice regular near $C$ by Theorem~\ref{polefactorization1}. We can apply Theorem~\ref{factorization} to find $N,n\in\nn$, $p_1,...,p_n \in S$ (with $p_i \neq \bar p_{i+1}$ for all $i \in \{1,\ldots, n-1\}$), and a slice regular function $g$ (with $g^s_{|_C}\not\equiv0$) such that
\[h(q)=[(q-x)^2+y^2]^N (q-p_1)*(q-p_2)*...*(q-p_n)*g(g)\]
near $C$. We point out that $N$ can be greater than zero only if $M=0$. Now, we can extend $g$ to a semiregular function on $\Omega$ by setting
\[g:=R^{-*}*h,\quad R(q):=[(q-x)^2+y^2]^N (q-p_1)*(q-p_2)*...*(q-p_n)\,.\]
Setting $m:=N-M$ concludes the proof.
\end{proof}

\begin{definition}\label{R-spherical order}
In the situation presented in Theorem \ref{R-transfactorization}: if $m\leq0$ then we call $-2m$ the \emph{spherical order of $f$ at $C$}, and write $ord_f^C(x+y\s)=-2m$; whenever $n>0$, we say that $n$ is the \emph{isolated multiplicity of $f$ at $p_1$, relative to $C$}.  If, moreover, $S\cap\Omega$ has a unique connected component, then we also call $-2m$ the \emph{spherical order of $f$ at $S\cap\Omega$} and denote it as $ord_f(x+y\s)$; we call $n$ the \emph{isolated multiplicity of $f$ at $p_1$}.
\end{definition}


\section{Minimum Modulus Principle and Open Mapping Theorem}\label{sec:min}

This section describes some topological properties of slice regular functions on slice domains. We begin by recalling the Maximum Modulus Principle,~\cite[Theorem 7.1]{librospringer}.

\begin{theorem}[Maximum Modulus Principle]\label{maximum}
Let $\Omega$ be a slice domain in $\hh$ and let $f:\Omega\to\hh$ be a slice regular function. If $|f|$ has a relative maximum at $p \in \Omega$, then $f$ is constant.
\end{theorem}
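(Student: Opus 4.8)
The plan is to reduce the statement to the Maximum Modulus Principle over \emph{symmetric} slice domains, namely \cite[Theorem 7.1]{librospringer}, by the localization technique already exploited in the proofs of Theorem~\ref{thm:operations} and Proposition~\ref{prop:quotient}. Assume $|f|$ has a relative maximum at $p\in\Omega$: there is an open ball $B\subseteq\Omega$ centered at $p$ with $|f(q)|\le|f(p)|$ for every $q\in B$. Apply the Local Extension Theorem~\ref{localextension} at $p$ to produce a symmetric slice domain $N$, a slice domain $\Lambda$ with $p\in\Lambda\subseteq\Omega\cap N$, and a slice regular function $\widetilde f:N\to\hh$ coinciding with $f$ throughout $\Lambda$.

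The crucial remark is that $\Lambda$, being a domain of $\hh$, is open in $\hh$; hence $\Lambda\cap B$ is an open neighbourhood of $p$ inside $N$, and on it $|\widetilde f|=|f|\le|f(p)|=|\widetilde f(p)|$. Thus $|\widetilde f|$ attains a relative maximum at the interior point $p$ of the symmetric slice domain $N$, so \cite[Theorem 7.1]{librospringer} forces $\widetilde f$ to be constant on $N$, say $\widetilde f\equiv c$. In particular $f\equiv c$ on $\Lambda$.

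It remains to propagate this equality to all of $\Omega$. Since $\Lambda$ is a slice domain it meets $\rr$; fixing any $I\in\s$, the slice $\Lambda_I=\Lambda\cap L_I$ is a nonempty open subset of $L_I$, hence possesses an accumulation point lying in $\Lambda_I\subseteq\Omega_I$, and there $f$ agrees with the slice regular constant function $c$. The Identity Principle~\ref{identity} then yields $f\equiv c$ in $\Omega$.

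I do not foresee a real obstacle here; the one delicate point is precisely that the coincidence set $\Lambda$ delivered by Theorem~\ref{localextension} is genuinely open in $\hh$ and intersects $\rr$, which is what legitimizes both the transfer of the relative maximum to $\widetilde f$ and the concluding use of the Identity Principle. If one preferred a self-contained argument, one could instead split $f_I$ into two $L_I$-valued holomorphic components to see that $|f_I|^2$ is subharmonic on $\Omega_I$, apply the strong maximum principle on a small disk around $p$ to get $f_I$, and hence $f$, constant near $p$, and then conclude again with the Identity Principle; the reduction above is, however, shorter and consistent with the methods used elsewhere in this paper.
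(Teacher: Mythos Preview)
Your argument is correct. Note, however, that the paper does not actually supply a proof of this theorem: it simply recalls \cite[Theorem~7.1]{librospringer}, whose original proof already applies to arbitrary slice domains without any symmetry assumption. That proof works slicewise---essentially the subharmonicity/splitting argument you sketch in your final paragraph---so no passage through a local extension to a symmetric slice domain is required. Your main route via Theorem~\ref{localextension} is therefore a valid but unnecessary detour, mirroring the proof of Theorem~\ref{minimum} (where the detour \emph{is} needed, since the Minimum Modulus Principle in \cite{librospringer} relies on the regular reciprocal and hence on symmetry). The alternative you outline at the end is in fact the direct argument that justifies the paper's bare citation.
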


We now prove the Minimum Modulus Principle over slice domains, exploiting the analogous result for symmetric slice domains, namely~\cite[Theorem 7.3]{librospringer}.

\begin{theorem}[Minimum Modulus Principle]\label{minimum}
Let $\Omega$ be a slice domain in $\hh$ and let $f:\Omega\to\hh$ be a slice regular function. If $|f|$ has a relative minimum at $p \in \Omega$, then either $f(p)=0$ or $f$ is constant.
\end{theorem}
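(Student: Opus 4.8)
The plan is to reduce the statement to the already-established Minimum Modulus Principle on symmetric slice domains, \cite[Theorem 7.3]{librospringer}, via the Local Extension Theorem~\ref{localextension}, in the same spirit as the other reduction arguments in this paper (e.g.\ Theorem~\ref{thm:operations}, Theorem~\ref{thm:zeros}). First I would observe that if $f(p)\neq0$ we must show $f$ is constant. Since $f$ is real analytic by Corollary~\ref{cor:realanalyticity}, and in particular continuous, the condition that $|f|$ has a relative minimum at $p$ means there is a neighborhood $V\subseteq\Omega$ of $p$ with $|f(q)|\geq|f(p)|$ for all $q\in V$.

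Next I would apply Theorem~\ref{localextension} to obtain an extension triplet $\big(\widetilde f,N,\Lambda\big)$ for $f$ with $p\in\Lambda\subseteq\Omega\cap N$, where $N$ is a symmetric slice domain and $\widetilde f:N\to\hh$ is slice regular and coincides with $f$ on $\Lambda$. Shrinking $\Lambda$ if necessary (it remains a slice domain, or one may pass to a smaller slice domain inside $\Lambda\cap V$ containing $p$ — using Lemma~\ref{gamma}-type constructions if needed, though intersecting with an open ball around $p$ suffices since $p$ has a real-analytic nbhd and $\Lambda$ is open), I can assume $\Lambda\subseteq V$, so that $|\widetilde f(q)|=|f(q)|\geq|f(p)|=|\widetilde f(p)|$ for all $q\in\Lambda$. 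Thus $|\widetilde f|$ has a relative minimum at $p$ as a function on the symmetric slice domain $N$, and $\widetilde f(p)=f(p)\neq0$. By \cite[Theorem 7.3]{librospringer}, $\widetilde f$ is constant on $N$. In particular $\widetilde f$ is constant on $\Lambda$, hence $f$ is constant on $\Lambda$, hence $f'_c$ vanishes on $\Lambda$; alternatively and more directly, $f$ agrees on $\Lambda$ with a constant function, and since $\Lambda$ is a slice domain (so $\Lambda_I$ is a domain of $L_I$ for each $I$ and accumulates in $\Omega$), the Identity Principle~\ref{identity} forces $f$ to equal that constant throughout $\Omega$.

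**The main obstacle** I anticipate is purely technical: ensuring that the slice domain $\Lambda$ supplied by the Local Extension Theorem can be taken small enough to sit inside the neighborhood $V$ realizing the relative minimum, while still being a slice domain containing $p$. Theorem~\ref{localextension} as stated does not let one prescribe $\Lambda$, so I would either invoke the stronger Theorem~\ref{localextension2} (whose statement we may use), or argue directly: once $\widetilde f$ is obtained on the symmetric slice domain $N$, one can replace $\Lambda$ by any slice domain $\Lambda'$ with $p\in\Lambda'\subseteq \Lambda\cap V$ on which $f=\widetilde f$ — for instance the construction $\Gamma(C,\varepsilon)$ of Lemma~\ref{gamma} applied to a small path-connected compact neighborhood of $p$, or simply a small enough open ball centered at a real point near $p$ joined to $p$, gives such a $\Lambda'$. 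Then $|\widetilde f|$ restricted to $\Lambda'$ has its minimum at $p$; but for the symmetric version \cite[Theorem 7.3]{librospringer} we need the minimum to be relative \emph{in $N$}, which is automatic since $\Lambda'$ is open in $N$. Everything else is a routine chain of implications, so this domain-bookkeeping is the only point requiring care.
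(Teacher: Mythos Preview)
Your proposal is correct and follows essentially the same route as the paper: apply the Local Extension Theorem to get an extension triplet $(\widetilde f,N,\Lambda)$ with $p\in\Lambda$, invoke \cite[Theorem 7.3]{librospringer} on the symmetric slice domain $N$, and finish with the Identity Principle. The ``obstacle'' you worry about is not actually an issue: to witness a relative minimum of $|\widetilde f|$ at $p$ in $N$ you only need an open neighborhood of $p$ in $N$, not a slice domain, and $\Lambda\cap V$ already serves this purpose without any shrinking or Lemma~\ref{gamma} construction.
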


\begin{proof}
We can apply Theorem~\ref{localextension} to find an extension triplet $\big(\widetilde f,N,\Lambda\big)$ for $f$, with $p\in\Lambda$. Clearly, $\big|\widetilde f\big|$ has a relative minimum at $p$. By~\cite[Theorem 7.3]{librospringer}, either $\widetilde f(p)=0$ or $\widetilde f$ is constant. In the former case, $f(p)=0$. In the latter case, $f$ is constant in $\Lambda$, whence in $\Omega$ by the Identity Principle~\ref{identity}.
\end{proof}

We now come to the Open Mapping Theorem. For the case of symmetric slice domains, see~\cite[\S7.2]{librospringer}. We begin with some preliminary material.

\begin{definition}
Let $\Omega$ be a slice domain in $\hh$ and let $f:\Omega\to\hh$ be a slice regular function. The zero set of the spherical derivative $f'_s:\Omega\setminus\rr\to\hh$ is called the \emph{degenerate set} of $f$ and denoted by the symbol $D_f$.
\end{definition}

\begin{proposition}
Let $\Omega$ be a slice domain in $\hh$ and let $f:\Omega\to\hh$ be a slice regular function. Either $f$ is constant or $D_f$ is a proper analytic subset of $\Omega\setminus\rr$. In the latter case, the dimension of $D_f$ cannot exceed $3$.
\end{proposition}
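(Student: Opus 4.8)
The plan is to reduce the statement to the known behavior over symmetric slice domains via the Local Extension Theorem (Theorem~\ref{localextension}), exactly as in the proof of the Minimum Modulus Principle. First I would observe that $f'_s$ is a real analytic function on $\Omega\setminus\rr$ by Corollary~\ref{cor:realanalyticity} together with property~{\it 1.} of the Remark following Definition~\ref{def:sphericalvalueandderivative}; hence $D_f=Z(f'_s)$ is a real analytic subset of $\Omega\setminus\rr$. The only thing that requires work is the dichotomy: either $D_f=\Omega\setminus\rr$, in which case I would show $f$ is constant, or $D_f$ is a proper analytic subset, in which case its real dimension is at most $3$ because a proper real analytic subset of a $4$-dimensional real analytic manifold has dimension at most $3$.

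For the dichotomy, suppose $D_f$ is not proper, i.e.\ $f'_s\equiv0$ on $\Omega\setminus\rr$. By property~{\it 3.} of the same Remark, this forces $f(q)=f^\circ_s(q)$ for all $q\in\Omega\setminus\rr$, and $f^\circ_s$ is locally constant on each $(x_0+y_0\s)\cap\Omega$ by property~{\it 2.}; I would then apply Theorem~\ref{localextension} to obtain an extension triplet $\big(\widetilde f,N,\Lambda\big)$ at an arbitrary $p_0\in\Omega\setminus\rr$, note that $\widetilde f'_s\equiv0$ on $\Lambda\setminus\rr$ forces $\widetilde f'_s\equiv0$ on all of $N\setminus\rr$ (since $\widetilde f'_s$ is real analytic on the connected set $N\setminus\rr$, which is connected because $N$ is a symmetric slice domain, and it vanishes on the open set $\Lambda\setminus\rr$), and invoke the corresponding result over the symmetric slice domain $N$ (the statement cited for symmetric slice domains, e.g.\ \cite[Remark 8.15]{librospringer} or the degenerate-set results of~\cite[\S7.2]{librospringer}) to conclude $\widetilde f$ is constant on $N$; hence $f$ is constant on $\Lambda$, whence on $\Omega$ by the Identity Principle~\ref{identity}. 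Alternatively, and perhaps more cleanly, one argues directly: if $f'_s\equiv0$ then $f=f^\circ_s$ is a locally slice function with zero spherical derivative, and combined with slice regularity (each $f_I$ holomorphic) and the Identity Principle this already pins $f$ down to a constant on any slice domain, reducing to the symmetric case only for the final step.

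The main obstacle I anticipate is bookkeeping rather than substance: one must be careful that $N\setminus\rr$ is connected (true since $N$ is a symmetric slice domain, hence $N\cap\rr$ is an interval and removing it keeps the upper and lower halves glued through the spheres), so that a real analytic function vanishing on a nonempty open subset of it vanishes identically — this is what transfers $f'_s\equiv0$ from the small slice domain $\Lambda$ to the whole symmetric completion $N$. For the dimension bound, once $D_f$ is a proper real analytic subset of the $4$-real-dimensional manifold $\Omega\setminus\rr$, the claim $\dim D_f\leq 3$ is a standard fact about real analytic sets (a proper analytic subset is nowhere dense and cannot contain a $4$-dimensional submanifold, else by analyticity it would be all of the connected ambient manifold). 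I would cite this standard fact rather than reprove it.
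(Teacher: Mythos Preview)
Your proposal is correct and follows essentially the same route as the paper: observe that $f'_s$ is real analytic so $D_f$ is an analytic subset, then in the case $D_f=\Omega\setminus\rr$ pass to an extension triplet $(\widetilde f,N,\Lambda)$, use that $\widetilde f'_s$ vanishes on an open subset of $N\setminus\rr$, invoke the known result over the symmetric slice domain $N$ to conclude $\widetilde f$ is constant, and finish with the Identity Principle. The only cosmetic difference is that the paper cites \cite[Theorem~6.4]{gporientation} (or \cite[Proposition~4.12]{altavillawithoutreal}) directly for ``$D_{\widetilde f}$ contains a nonempty open set $\Rightarrow$ $\widetilde f$ constant'', thereby bypassing your intermediate connectedness-of-$N\setminus\rr$ argument.
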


\begin{proof}
Since $f'_s$ is a real analytic function, $D_f$ is an analytic subset of $\Omega\setminus\rr$. If this subset is proper, then its dimension is less than, or equal to, $3$. Assume, instead, $D_f=\Omega\setminus\rr$. Consider an extension triplet $\big(\widetilde f,N,\Lambda\big)$ for $f$. Clearly, $D_{\widetilde f}$ includes the nonempty open set $\Lambda\setminus\rr$. If we apply~\cite[Theorem 6.4]{gporientation} (or~\cite[Proposition 4.12]{altavillawithoutreal}) to the function $\widetilde f$ on the symmetric slice domain $N$, we can conclude that $\widetilde f$ is constant. Thus, $f$ is constant in $\Lambda$, whence in $\Omega$ by the Identity Principle~\ref{identity}.
\end{proof}

We point out that, when $\Omega$ is not symmetric, the degenerate set $D_f$ needs not be symmetric. Indeed, the spherical derivative $f'_s$ may vanish identically on a connected component of $(x+y\s)\cap\Omega$ while not vanishing elsewhere in $(x+y\s)\cap\Omega$. This happens, for instance, in Example~\ref{ex:1cap1point}.

We are now ready for the announced theorem. In the statement, $\overline{D_f}$ denotes the closure in $\Omega$ of $D_f\subseteq\Omega\setminus\rr$. We point out that the added set $\overline{D_f}\setminus D_f$ is included in the intersection between the singular set of $f$ and $\rr$, whence in the discrete set $Z(f'_c)\cap\rr$.

\begin{theorem}[Open Mapping]\label{open}
Let $\Omega$ be a slice domain in $\hh$, let $f:\Omega\to\hh$ be a nonconstant slice regular function and let $D_f$ be its degenerate set. Then $f:\Omega \setminus \overline{D_f} \to \hh$ is open.
\end{theorem}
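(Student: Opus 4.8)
The plan is to reduce the statement to the Open Mapping Theorem over \emph{symmetric} slice domains (see~\cite[\S7.2]{librospringer}) via the Local Extension Theorem~\ref{localextension}, exactly as the Minimum Modulus Principle~\ref{minimum} was obtained. Since $\Omega\setminus\overline{D_f}$ is open and images commute with unions, it suffices to exhibit, for every $p_0\in\Omega\setminus\overline{D_f}$, an open neighborhood $V_{p_0}\subseteq\Omega\setminus\overline{D_f}$ on which $f$ restricts to an open map: then any open $W\subseteq\Omega\setminus\overline{D_f}$ satisfies $f(W)=\bigcup_{p_0\in W}f(W\cap V_{p_0})$, a union of open sets.

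Fix $p_0\in\Omega\setminus\overline{D_f}$ and apply Theorem~\ref{localextension} to obtain an extension triplet $\big(\widetilde f,N,\Lambda\big)$ for $f$ with $p_0\in\Lambda$. The function $\widetilde f$ is nonconstant: otherwise $f$ would be constant in $\Lambda$, hence in $\Omega$ by the Identity Principle~\ref{identity}, contrary to hypothesis. As $f=\widetilde f$ throughout $\Lambda$, we get $f'_s=\widetilde f'_s$ in $\Lambda\setminus\rr$, so that $D_f\cap\Lambda=D_{\widetilde f}\cap\Lambda$. Since $p_0\notin\overline{D_f}$, we may pick an open neighborhood $V$ of $p_0$ with $V\subseteq\Lambda\cap(\Omega\setminus\overline{D_f})$ and $V\cap D_f=\emptyset$; then $V\cap D_{\widetilde f}=\emptyset$ too, and because $V$ is open this yields $V\cap\overline{D_{\widetilde f}}=\emptyset$, i.e.\ $V\subseteq N\setminus\overline{D_{\widetilde f}}$.

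The Open Mapping Theorem over symmetric slice domains, applied to the nonconstant slice regular function $\widetilde f:N\to\hh$, guarantees that $\widetilde f$ is open on $N\setminus\overline{D_{\widetilde f}}$. The restriction of this open map to the open subset $V$ is again open; and $f_{|_V}=\widetilde f_{|_V}$ because $V\subseteq\Lambda$. Hence $V=V_{p_0}$ has the desired property, and the proof is complete.

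The point that needs care is the transfer of the exceptional set: one must discard the \emph{closure} $\overline{D_f}$, not merely $D_f$. Indeed a point of $\overline{D_f}\setminus D_f$ lies in $Z(f'_c)\cap\rr$ (as noted before the statement) and is approached by $2$-spheres on whose components $f$ restricts to a constant, so the symmetric extension $\widetilde f$ may well fail to be open there; only by removing all of $\overline{D_f}$ can we be certain that $p_0\notin\overline{D_{\widetilde f}}$ and thus apply the symmetric result. One could instead argue directly from the Minimum Modulus Principle~\ref{minimum} and the zero set results of Section~\ref{sec:zerosets} --- when $p_0\notin\overline{D_f}$, the function $f-f(p_0)$ has $p_0$ as an isolated zero, and $|f-a|$ can have no interior local minimum --- but this route needs extra care because small Euclidean balls around a non-real $p_0$ are not slice domains.
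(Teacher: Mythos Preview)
Your proof is correct, but the paper proceeds differently. The paper argues directly from the Minimum Modulus Principle (Theorem~\ref{minimum}) and Corollary~\ref{cor:zeros}: given $q_0\in\Omega\setminus\overline{D_f}$, the point $q_0$ is an \emph{isolated} zero of $f-f(q_0)$ (because either $q_0\in\rr$ or $f'_s(q_0)\neq0$); one then picks a small closed Euclidean ball in $U$ around $q_0$, bounds $|f-p|$ from below on its boundary sphere for $p$ close to $f(q_0)$, and forces an interior local minimum of $|f-p|$, which by Theorem~\ref{minimum} must be a zero. The concern you raise in your final paragraph --- that small balls around a non-real $q_0$ are not slice domains --- is not an obstruction: Theorem~\ref{minimum} is formulated for relative minima of $|f|$ in the slice domain $\Omega$ itself, and any open ball inside $\Omega$ is a perfectly good neighborhood for that purpose.

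Your route, by contrast, pushes the Local Extension one level higher: rather than using it only inside the Minimum Modulus Principle, you localize around $p_0$ via an extension triplet and invoke the Open Mapping Theorem for symmetric slice domains directly. The delicate transfer step is the one you handle explicitly, namely $V\cap D_{\widetilde f}=\emptyset$ (from $D_f\cap\Lambda=D_{\widetilde f}\cap\Lambda$) and hence $V\cap\overline{D_{\widetilde f}}=\emptyset$ by openness of $V$. Both arguments are short; the paper's is marginally more self-contained once Theorem~\ref{minimum} and Corollary~\ref{cor:zeros} are available, while yours makes the reduction-to-the-symmetric-case paradigm more transparent.
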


\begin{proof}
Let $U$ be an open subset of $\Omega\setminus\overline{D_f}$, pick $q_0\in U$ and let us prove that $p_0=f(q_0)$ is an interior point of $f(U)$. To do so, we will find $\varepsilon>0$ such that $f(U)$ includes the open Euclidean ball $B(p_0,\varepsilon)$ of radius $\varepsilon$ centered at $p_0$. By construction, either $q_0\in\rr$ or $f'_s(q_0)\neq0$. In either case, by Corollary~\ref{cor:zeros}, the point $q_0$ is an isolated zero of the function $f(q)-p_0$. Thus, there exists $r>0$ such that $\overline{B(q_0,r)} \subseteq U$ and $f(q)-p_0\neq0$ for all $q\in\overline{B(q_0,r)}\setminus\{q_0\}$. This implies that
\[\min_{|q-q_0| = r} |f(q)-p_0|\geq3\varepsilon\]
for some $\varepsilon>0$. If $p\in B(p_0,\varepsilon)$ and if $|q-q_0|=r$, it holds 
\[|f(q) -p| \geq |f(q) -p_0| - |p-p_0| \geq 3 \varepsilon -\varepsilon = 2 \varepsilon > \varepsilon \geq |p_0-p| = |f(q_0)-p|.\]
It follows that
\[\min_{|q-q_0|=r}|f(q) -p|>|f(q_0)-p|,\] 
whence $|f(q)-p|$ has a local minimum point $q_1\in B(q_0,r)$. Taking into account that $f(q)-p$ is not constant, Theorem \ref{minimum} implies that $f(q_1) = p$. We conclude that $f(U) \supseteq B(p_0,\varepsilon)$, as desired.
\end{proof}


\section{Integral representation}\label{sec:integral}

This section studies integral representation formulas for slice regular functions on slice domains. For the case of symmetric slice domains, see~\cite[\S6.2]{librospringer} (derived from~\cite{cauchy,advances}) and~\cite[Corollary 2.6]{volumeintegral}. Let us begin with some notations. 

Let ${\gamma_I} : [0,1] \to L_I$ be a rectifiable curve whose support lies in a plane $L_I$ for some $I \in \s$, let $\Gamma_I$ be a neighborhood of ${\gamma_I}$ in $L_I$ and let $f,g : \Gamma_I \to \hh$ be continuous functions. If $J \in \s$ is such that $J \perp I$ then $L_I + L_IJ = \hh =L_I + J L_I$ and there exist continuous functions $F,G,H,K:\Gamma_I \to L_I$ such that $f = F+GJ$ and $g = H+JK$ in $\Gamma_I$. We use the notation
\begin{align*}
\int_{\gamma_I} g(s) ds f(s) :=& \int_{\gamma_I} H(s) ds F(s) +  \int_{\gamma_I} H(s) dsG(s) J+\\
& +  J \int_{\gamma_I} K(s)ds F(s)+ J \int_{\gamma_I} K(s)ds G(s)J\,.
\end{align*}

We now consider the ``slicewise'' Cauchy Integral Formula.

\begin{lemma}\label{slicewisecauchy}
Let $f:\Omega\to\hh$ be a slice regular function, let $I \in \s$ and let $U_I$ be a bounded Jordan domain in $L_I$, with $\overline{U_I} \subset \Omega_I$. If $\partial U_I$ is rectifiable then 
\[f(z) = \frac{1}{2 \pi I} \int_{\partial U_I} \frac{ds}{s-z} f(s)\]
for all $z \in U_I$.
\end{lemma}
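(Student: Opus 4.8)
The plan is to reduce the quaternionic statement to the classical complex Cauchy Integral Formula applied slicewise, exactly along the lines the integral notation was set up for. Fix $I\in\s$ and pick $J\in\s$ with $J\perp I$, so that $\hh = L_I\oplus L_I J$ as a right $L_I$-module. First I would write $f_I = F + GJ$ on a neighborhood of $\overline{U_I}$ in $L_I$, where $F,G:\Omega_I\to L_I$ are continuous. The crucial observation is that, since $f$ is slice regular, $f_I$ is holomorphic in the sense of Definition~\ref{sliceregular}, i.e.\ $\bar\partial_I f_I\equiv 0$; identifying $(L_I,I)$ with $(\cc,i)$, this says precisely that $F$ and $G$ are holomorphic functions $U_I\to\cc$ in the ordinary sense. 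So on each slice the problem becomes two copies of the usual complex Cauchy formula.

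Next I would apply the classical Cauchy Integral Formula on the bounded Jordan domain $U_I$ (with rectifiable boundary $\partial U_I$) to each of the holomorphic functions $F$ and $G$: for all $z\in U_I$,
\[
F(z) = \frac{1}{2\pi I}\int_{\partial U_I}\frac{ds}{s-z}\,F(s),\qquad
G(z) = \frac{1}{2\pi I}\int_{\partial U_I}\frac{ds}{s-z}\,G(s),
\]
where everything takes values in $L_I\cong\cc$ and $\frac{1}{2\pi I}$ is just the scalar $\frac{1}{2\pi i}$. Here I am invoking the standard Cauchy formula for Jordan domains with rectifiable boundary; this is the one genuinely analytic input and the only place where the hypotheses on $U_I$ (bounded, Jordan, $\overline{U_I}\subset\Omega_I$, rectifiable boundary) are used.

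Then I would reassemble. Multiply the $G$-identity on the right by $J$ and add it to the $F$-identity:
\[
f(z) = F(z) + G(z)J
= \frac{1}{2\pi I}\int_{\partial U_I}\frac{ds}{s-z}\,F(s)
+ \left(\frac{1}{2\pi I}\int_{\partial U_I}\frac{ds}{s-z}\,G(s)\right)J.
\]
Since the scalar $\frac{1}{2\pi I}$ and the $L_I$-valued kernel $\frac{1}{s-z}$ commute with the right multiplication by $J$ only up to the bookkeeping encoded in the definition of $\int_{\gamma_I} g(s)\,ds\,f(s)$, I would now match the right-hand side term-by-term with that definition, taking $g\equiv 1$ (so that $H\equiv 1$, $K\equiv 0$ in the decomposition $g = H + JK$) and $f_I = F + GJ$. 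With $g\equiv1$ the definition collapses to exactly $\int_{\partial U_I} F(s)\,ds + \big(\int_{\partial U_I} G(s)\,ds\big)J$ with the kernel $\frac{1}{s-z}$ inserted, which is precisely what we obtained. This yields $f(z) = \frac{1}{2\pi I}\int_{\partial U_I}\frac{ds}{s-z}\,f(s)$ for all $z\in U_I$, as claimed.

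The main obstacle is purely notational rather than conceptual: one must be careful that $\frac{1}{s-z}$, $s$, $z$ and the constant $\frac{1}{2\pi I}$ all lie in the \emph{commutative} field $L_I$, so that scalar manipulations inside the slice are legitimate, while the only noncommutativity — the right factor $J$ coming from the $G$-component — is exactly the one already accounted for in the definition of the slice integral $\int_{\gamma_I} g(s)\,ds\,f(s)$. Once this is kept straight, the proof is a one-line reduction to the classical result; no slice-domain-specific machinery (not even the Local Extension Theorem) is needed, since the statement is intrinsically one-slice.
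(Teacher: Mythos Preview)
Your proof is correct and is exactly the approach the paper has in mind: the paper does not reprove the lemma but simply observes that the proof of \cite[Lemma~6.3]{librospringer} ``holds verbatim'' without the symmetry assumption on $\Omega$, and that proof is precisely the splitting $f_I=F+GJ$ into $L_I$-valued holomorphic components followed by two applications of the classical Cauchy formula, reassembled via the slice-integral definition. Your remark that no slice-domain machinery is needed is spot on and is the reason the cited proof carries over unchanged.
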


For the previous statement, the proof of~\cite[Lemma 6.3]{librospringer} holds verbatim without assuming $\Omega$ to be a symmetric slice domain (an assumption made in the original statement).

We now derive a local version of the Cauchy Formula, valid on all slice domains. Another version will be provided in Theorem~\ref{cauchyformula2}. For each $s\in\hh$, let $(s-q)^{-*}$ denote the regular reciprocal of $q \mapsto s-q$, i.e.
\[(s-q)^{-*} = (|s|^2-q2Re(s)+ q^2)^{-1}(\bar s-q)\,.\]

\begin{proposition}[Local Cauchy Formula I]\label{cauchyformula}
Let $\Omega$ be a slice domain in $\hh$, let $f:\Omega\to\hh$ be a slice regular function and let $\big(\widetilde f,N,\Lambda\big)$ be an extension triplet for $f$. If $U$ is a bounded symmetric open subset of $\hh$ with $\overline{U}\subseteq N$, if $I\in\s$ and if the boundary $\partial U_I$ is a finite union of disjoint rectifiable Jordan curves, then
\[f(q) =\int_{\partial U_I} (s-q)^{-*} (2\pi I)^{-1}ds\,\widetilde f(s)\]
for all $q\in U\cap\Lambda$.
\end{proposition}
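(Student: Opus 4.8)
The plan is to reduce the statement to the symmetric-slice-domain case, which is already available in the literature. First I would observe that $\widetilde f$ is slice regular on the symmetric slice domain $N$, and that $\overline U\subseteq N$ with $U$ bounded and symmetric. For a symmetric open set $U$ whose slice boundary $\partial U_I$ is a finite union of disjoint rectifiable Jordan curves, the Cauchy integral formula over symmetric slice domains (see \cite[\S6.2]{librospringer}, derived from \cite{cauchy,advances}) gives
\[\widetilde f(q)=\int_{\partial U_I} (s-q)^{-*}(2\pi I)^{-1}\,ds\,\widetilde f(s)\]
for all $q\in U$. This is exactly the statement applied to $\widetilde f$ on $N$; I should only check that the hypotheses of the cited formula are met, namely that $\overline{U_I}\subset N_I$ and that $\partial U_I$ is rectifiable, both of which follow from our assumptions.

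The second and final step is to transfer this identity from $\widetilde f$ to $f$ on the overlap $U\cap\Lambda$. Here the key facts are that $\Lambda$ is a slice domain with $\Lambda\subseteq\Omega\cap N$, that $f$ and $\widetilde f$ coincide on $N\cap\rr$ (by the definition of an extension triplet), and hence, by the Identity Principle \ref{identity} applied on the slice domain $\Lambda$, that $f=\widetilde f$ throughout $\Lambda$. Consequently, for $q\in U\cap\Lambda$ the left-hand side $\widetilde f(q)$ equals $f(q)$; the right-hand side is unchanged because the integrand involves only the values of $\widetilde f$ on $\partial U_I$ and the kernel $(s-q)^{-*}$, which is a fixed slice regular function of $q$ independent of $f$. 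Combining the two displays yields
\[f(q)=\int_{\partial U_I}(s-q)^{-*}(2\pi I)^{-1}\,ds\,\widetilde f(s)\]
for all $q\in U\cap\Lambda$, as claimed. Note that one cannot replace $\widetilde f(s)$ by $f(s)$ on the right, since $\partial U_I$ need not lie in $\Lambda$; this is why the extension $\widetilde f$ appears in the integrand even though the formula computes $f(q)$.

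The only genuinely delicate point is bookkeeping about which function appears where: the formula must be stated with $\widetilde f$ in the integrand, and the equality with $f(q)$ on the left is valid precisely on $U\cap\Lambda$, not on all of $U$. I do not expect any analytic difficulty, since the heavy lifting — existence and convergence of the slice-regular Cauchy kernel $(s-q)^{-*}$ and the integral formula itself — is entirely inherited from the symmetric theory via the extension triplet supplied by Theorem \ref{localextension}. If one prefers a self-contained argument avoiding the symmetric Cauchy formula, an alternative is to start from the slicewise formula in Lemma \ref{slicewisecauchy} applied to $\widetilde f_I$ on $U_I$ and then "regularize" the kernel $\tfrac{1}{s-z}$ into $(s-q)^{-*}$ by the standard computation relating the pointwise product and the regular product on the sphere $x+y\s$; this is the route taken in \cite{cauchy} and it carries over verbatim once $\widetilde f$ is in hand.
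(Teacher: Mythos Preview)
Your proposal is correct and follows exactly the paper's approach: apply the Cauchy formula from the symmetric theory (\cite[Theorem 6.4]{librospringer}) to $\widetilde f$ on $N$, then restrict to $U\cap\Lambda$ where $\widetilde f=f$. One small simplification: the coincidence $f=\widetilde f$ on $\Lambda$ is already part of the definition of an extension triplet, so your appeal to the Identity Principle is unnecessary (though harmless).
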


\begin{proof}
By~\cite[Theorem 6.4]{librospringer}, the formula
\[\widetilde f(q) =\int_{\partial U_I} (s-q)^{-*} (2\pi I)^{-1}ds\,\widetilde f(s)\]
applies to all $q\in U$. Since $\widetilde f$ coincides with $f$ in $\Lambda$, the thesis follows.
\end{proof}

The same technique can be applied to prove the next result. For any $x,y\in\rr$ and any $I\in\s$, we will use the kernel
\[C(q,x+yI):=(2\pi y)^{-2}\left[(q-x)^2+y^2\right]^{-1}(x-yI-q)=(2\pi y)^{-2}(x+Iy-q)^{-*}\]
for $q\in\hh$.

\begin{proposition}[Local Volume Cauchy Formula]
Let $\Omega$ be a slice domain in $\hh$, let $f:\Omega\to\hh$ be a slice regular function and let $\big(\widetilde f,N,\Lambda\big)$ be an extension triplet for $f$. Let $U$ be a bounded symmetric open subset of $\hh$ with $\overline{U}\subseteq N$ and assume the boundary $\partial U$ to be $C^1$. For $w\in\partial U$, let ${\bf n}(w)$ denote the outer normal versor of $\partial U$ at $w$ and let $d\sigma_w$ denote the standard $3$-volume form on $\partial U$. Then
\[f(q)=\int_{\partial U} C(q,w){\bf n}(w)\widetilde f(w)d\sigma_w\]
for all $q\in U\cap\Lambda$.
\end{proposition}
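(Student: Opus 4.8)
The plan is to mimic verbatim the proof of the Local Cauchy Formula~I (Proposition~\ref{cauchyformula}), only replacing the boundary Cauchy integral by its volume counterpart. Recall that in an extension triplet $\big(\widetilde f,N,\Lambda\big)$ the function $\widetilde f$ is, by definition, slice regular on the \emph{symmetric} slice domain $N$, while $U$ is assumed bounded, symmetric, with $\overline{U}\subseteq N$ and $\partial U$ of class $C^1$. Hence $\widetilde f$ and $U$ satisfy exactly the hypotheses of the Volume Cauchy Formula over symmetric slice domains, namely~\cite[Corollary 2.6]{volumeintegral}. Applying that result to $\widetilde f$ on $U$ gives
\[\widetilde f(q)=\int_{\partial U} C(q,w)\,{\bf n}(w)\,\widetilde f(w)\,d\sigma_w\]
for every $q\in U$, with $C(q,w)$ the kernel introduced just before the statement.

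Next, since $f$ and $\widetilde f$ coincide on the slice domain $\Lambda\subseteq\Omega\cap N$, and since $\partial U\subseteq\overline{U}\subseteq N$ ensures that the integrand on the right-hand side is literally the same function, we may restrict the displayed identity to the open set $U\cap\Lambda$; for $q\in U\cap\Lambda$ its left-hand side equals $f(q)$, which is the claimed formula. As in Proposition~\ref{cauchyformula}, the entire content of the statement is carried by Theorem~\ref{localextension}, which furnishes the symmetric ambient $N$ on which the classical result can be invoked.

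I do not expect a genuine obstacle. The only points requiring a little care are bookkeeping: first, checking that the symmetric-case Volume Cauchy Formula is stated for an open set $U$ that need not be connected — a finite union of disjoint symmetric pieces with $C^1$ boundary is admissible, exactly as in the boundary case — so that the hypotheses transfer without change; and second, matching the two forms of the kernel, i.e. verifying that $C(q,x+yI)=(2\pi y)^{-2}\big[(q-x)^2+y^2\big]^{-1}(x-yI-q)=(2\pi y)^{-2}(x+Iy-q)^{-*}$ is the object appearing in~\cite{volumeintegral}. Neither step involves any new idea beyond what was already used for Proposition~\ref{cauchyformula}.
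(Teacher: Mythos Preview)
Your proposal is correct and follows essentially the same approach as the paper's own proof: apply \cite[Corollary~2.6]{volumeintegral} to the extension $\widetilde f$ on the symmetric slice domain $N$ to obtain the volume Cauchy formula for all $q\in U$, then use that $\widetilde f$ coincides with $f$ on $\Lambda$ to conclude for $q\in U\cap\Lambda$. The additional bookkeeping remarks you include are not needed for the argument but do no harm.
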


\begin{proof}
By~\cite[Corollary 2.6]{volumeintegral}, the formula
\[\widetilde f(q)=\int_{\partial U} C(q,w){\bf n}(w)\widetilde f(w)d\sigma_w\]
applies to all $q\in U$. Since $\widetilde f$ coincides with $f$ in $\Lambda$, the thesis follows.
\end{proof}

We now come to the announced second local version of the Cauchy Formula. In this second version, the set $U$ does not depend on the choice of an extension triplet but only on the domain $\Omega$. Moreover, a subset of $U$ where the formula holds can be described explicitly. To prove this second version, we need to strengthen Theorem~\ref{localextension} to the next result, which is of independent interest.

\begin{theorem}[Local Extension]\label{localextension2}
Let $f$ be a slice regular function on a slice domain $\Omega$ and let $J_0\in\s$. Let $\mathcal{C}$ be a symmetric, compact and path-connected subset of $\hh$ such that
\[\emptyset\neq\mathcal{C}_{J_0}^+\subset\Omega_{J_0}^+\,.\]
If $\mathcal{C}\cap\rr$ is not empty, suppose it is a closed interval included in $\Omega$. Then there exist an extension triplet $\big(\widetilde f,N,\Lambda\big)$ for $f$ and a real number $\delta>0$ such that
\[\bigcup_{|J-J_0|\leq\delta} \overline{\mathcal{C}_J^+}\subset\Lambda\,.\]
In particular, $\widetilde f$ coincides with $f$ in a neighborhood of $\overline{\mathcal{C}_{J_0}^+}$.
\end{theorem}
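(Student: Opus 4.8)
The plan is to reduce the statement to the existence of a single extension triplet $\big(\widetilde f,N,\Lambda\big)$ for $f$ with $\overline{\mathcal C_{J_0}^+}\subseteq\Lambda$, and then to obtain such a triplet by patching together local extensions.

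To see the reduction, set $C:=\overline{\mathcal C_{J_0}^+}$ and, for $J\in\s$, let $\psi_J$ be the map $x+yJ_0\mapsto x+yJ$ (for $x\in\rr$, $y\geq0$). Using the symmetry of $\mathcal C$ one checks that $\psi_J(C)=\overline{\mathcal C_J^+}$ and that $\sup_{q\in C}|\psi_J(q)-q|\leq M\,|J-J_0|$, where $M:=\max_{q\in C}|\im(q)|$; in particular $\psi_J$ tends uniformly on $C$ to the identity as $J\to J_0$. Hence, once an extension triplet with $C\subseteq\Lambda$ is available, the compactness of $C$ and the openness of $\Lambda$ give an $r>0$ with the $r$-neighbourhood of $C$ contained in $\Lambda$, and $\delta:=r/(M+1)$ then makes $\overline{\mathcal C_J^+}=\psi_J(C)\subseteq\Lambda$ for all $|J-J_0|\leq\delta$. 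The last sentence of the statement is then immediate, since $\Lambda$ is an open neighbourhood of $C$.

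For the construction, recall that $C\subseteq\mathcal C\subseteq\Omega$: the real points of $C$ lie in $\mathcal C\cap\rr\subseteq\Omega$ when this intersection is non-empty, and $C$ is bounded away from $\rr$ otherwise. By Theorem~\ref{localextension} --- or, at real points, by taking a small Euclidean ball, which is already a symmetric slice domain on which $f$ is slice regular --- each $p\in\mathcal C$ admits an extension triplet $\big(\widetilde f_p,N_p,\Lambda_p\big)$ for $f$ with $p\in\Lambda_p$. As $\mathcal C$ is compact, finitely many $\Lambda_{p_1},\dots,\Lambda_{p_m}$ cover it, and as $\mathcal C$ is connected their union is a domain containing $\mathcal C$. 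The local extensions agree on every overlap $\Lambda_{p_i}\cap\Lambda_{p_j}$, since each equals $f$ there; because slice regular functions are real analytic (Corollary~\ref{cor:realanalyticity}), they therefore agree on the components of the symmetric open sets $N_{p_i}\cap N_{p_j}$ meeting those overlaps, so the $\widetilde f_{p_i}$ patch to a slice regular function $\widetilde f$ on a symmetric open set containing $\mathcal C$. Exploiting the connectedness of $\mathcal C$ and, when $\mathcal C\cap\rr\neq\emptyset$, the fact that $\mathcal C\cap\rr$ is a closed interval inside $\Omega$ --- and, when $\mathcal C\cap\rr=\emptyset$, adjoining finitely many further patches along a path in $\Omega$ joining $\mathcal C$ to $\rr$ --- one trims this set down to a symmetric slice domain $N$ with $N\cap\rr\subset\Omega$. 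Finally, Lemma~\ref{gamma}, applied inside the open set $\Omega\cap N$ to suitable compact path-connected arcs passing through $C$, produces a slice domain $\Lambda$ with $C\subseteq\Lambda\subseteq\Omega\cap N$, on which $f=\widetilde f$ by the Identity Principle~\ref{identity} (they already coincide near $C$). Then $\big(\widetilde f,N,\Lambda\big)$ is the desired triplet.

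The step I expect to be the genuine obstacle is the globalisation just sketched: the local extension at a single point is handed to us by Theorem~\ref{localextension}, but combining finitely many local triplets into one triplet $\big(\widetilde f,N,\Lambda\big)$ --- ensuring that the patched domain is a genuine symmetric slice domain (connected, with connected slices, meeting $\rr$ within $\Omega$) and that $\Lambda$ is a genuine slice domain containing the possibly disconnected compact set $\overline{\mathcal C_{J_0}^+}$ --- calls for careful control of the topology of intersections of the (generally non-symmetric) domains $\Lambda_{p_i}$ and $\Omega\cap N$ and of their slices. This is precisely where the hypotheses that $\mathcal C$ be symmetric and path-connected with $\mathcal C\cap\rr$ a closed interval, together with the explicit neighbourhood construction of Lemma~\ref{gamma}, do the real work; the Extension Formula (Theorem~\ref{extensionformulathm}) and the Identity Principle serve only to produce and to glue the local pieces.
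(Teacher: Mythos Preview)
Your reduction in the first paragraph is correct and matches the paper's final step. The construction, however, contains an error: you assert $\mathcal{C}\subseteq\Omega$ and proceed to cover $\mathcal{C}$ by extension triplets, but the hypothesis only gives $\mathcal{C}_{J_0}^+\subset\Omega_{J_0}^+$ and (when nonempty) $\mathcal{C}\cap\rr\subset\Omega$. Since $\Omega$ is not assumed symmetric, the half-slices $\mathcal{C}_J^+$ for $J\neq J_0$ need not lie in $\Omega$ at all, and Theorem~\ref{localextension} cannot be invoked there. Even after restricting to $C=\overline{\mathcal{C}_{J_0}^+}$, the globalisation step you yourself flag as the obstacle is a real one: the symmetric sets $N_{p_i}\cap N_{p_j}$ may have connected components disjoint from every $\Lambda_{p_i}\cap\Lambda_{p_j}$, and on such components the local extensions $\widetilde f_{p_i},\widetilde f_{p_j}$ can genuinely disagree (this is exactly the phenomenon behind Example~\ref{ex:douren}). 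So no well-defined $\widetilde f$ lives on $\bigcup N_{p_i}$, and the ``trimming'' to a symmetric slice domain is left entirely unspecified.

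The paper avoids patching altogether. It applies Lemma~\ref{gamma} \emph{once} to the whole compact path-connected set $C\subset\Omega_{J_0}$, obtaining a single slice domain $M=\Gamma(C,\varepsilon)\subseteq\Omega$. Choosing $K_0$ close enough to $J_0$, a distance estimate shows that the symmetric completion $N$ of $M_{K_0}$ satisfies $N_{J_0}^+\subseteq M_{J_0}^+\subseteq\Omega_{J_0}$ and $N_{K_0}^+=M_{K_0}^+\subseteq\Omega_{K_0}$. Thus $f_{J_0}$ and $f_{K_0}$ are both available where needed, and the Extension Formula (Theorem~\ref{extensionformulathm}) produces $\widetilde f:N\to\hh$ directly from these two slice data, with nothing to glue. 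A second application of Lemma~\ref{gamma} inside $N\cap\Omega$ then gives $\Lambda$. The idea you are missing is that Lemma~\ref{gamma} already delivers a single slice domain in $\Omega$ containing all of $C$, which renders the compactness-and-patching strategy unnecessary.
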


\begin{proof}
We assume, without loss of generality, $\mathcal{C}$ to intersect $\rr$ in a closed interval included in $\Omega$. Indeed, if $\mathcal{C}\cap\rr=\emptyset$, we may proceed as follows. Since $\Omega$ is a slice domain, we can pick a path $\gamma:[0,1]\to\Omega_{J_0}$ with $\gamma(0)\in\rr, \gamma((0,1])\subset\Omega_{J_0}^+$ and $\gamma(1)\in\mathcal{C}_{J_0}^+$. We can denote by $P$ the symmetric completion of the support of $\gamma$ and replace $\mathcal{C}$ by $P\cup\mathcal{C}$. In other words, we can replace every half slice $\mathcal{C}_J^+$ by the ``kite'' $P_J^+\cup\mathcal{C}_J^+$.

We set $C:=\overline{\mathcal{C}_{J_0}^+}=(\mathcal{C}\cap\rr)\cup\mathcal{C}_{J_0}^+$ and remark that $C$ is a compact and path-connected subset of $\Omega_{J_0}$. By Lemma~\ref{gamma}, there exists $\varepsilon>0$ such $M := \Gamma(C,\varepsilon)$ is a slice domain with the property $C \subset M \subseteq \Omega$. Let $q_0\in C$ be such that $\max_{q\in C}|\im(q)|=|\im(q_0)|$.

Let $x+J_0y\in C$. If we choose $K_0 \in \s$ with $0<|K_0-J_0|<\frac{\varepsilon}{|\im(q_0)|}$, then $x+K_0y$ is included in $M_{K_0}$. Indeed, the distance between $x+K_0y$ and $x+J_0y$ is $y|K_0-J_0|$, which is less than $\frac{y}{|\im(q_0)|}\varepsilon$ if $y>0$ and is $0$ if $y=0$.

Let $N$ be the symmetric completion of the connected set $M_{K_0}$. We point out the following properties of $N$: it includes $\mathcal{C}$; it has $N_{K_0}^+=M_{K_0}^+$; and it is a slice domain. Moreover, $N_{J_0}^+ \subseteq M_{J_0}^+ \subseteq \Omega_{J_0}$. Indeed, for each $x+yJ_0 \in N_{J_0}^+$ it holds $x+yK_0 \in N_{K_0}^+=M_{K_0}^+$. By direct computation, for all $q\in C$, it holds
\[|x+yK_0-q|^2-|x+yJ_0-q|^2 = 2y|\im(q)|(1-\langle K_0, J_0 \rangle) \geq 0 \,.\]
Thus, the distance between $x+yJ_0$ and $q$ is less than, or equal to, the distance between $x+yK_0$ and $q$. If $q\in C\cap\rr$ and if $B(q,\varepsilon)$ includes $x+yK_0$, then the same ball includes $x+yJ_0$. Similarly, if $q\in C\setminus\rr$ and if $B\left(q,\frac{|\im(q)|}{|\im(q_0)|}\varepsilon\right)$ includes $x+yK_0$, then the same ball includes $x+yJ_0$. In both cases, $x+yJ_0$ belongs to $M_{J_0}^+$.

By the General Extension Formula~\ref{extensionformulathm}, there exists a unique slice regular function $\widetilde f: N \to \hh$ that coincides with $f_{J_0}$ in $N_{J_0}^+ \subseteq M_{J_0}^+ \subseteq \Omega_{J_0}$, with $f_{K_0}$ in $N_{K_0}^+=M_{K_0}^+$ and with $f$ in $N \cap \rr = M_{K_0} \cap \rr$.

Within the open set $N \cap \Omega$, the slice $(N \cap \Omega)_{J_0}$ includes $C$. Lemma~\ref{gamma} guarantees that there exists $\varepsilon_0>0$ such that the slice domain $\Lambda := \Gamma(C,\varepsilon_0)$ has the property $C \subset \Lambda \subseteq N\cap\Omega$. Now, $\widetilde f$ and $f$ coincide in $N \cap \Omega \cap \rr = N \cap \rr$, whence throughout $\Lambda$ by the Identity Principle~\ref{identity}.

Finally, let us prove that for all $J\in\s$ with $|J-J_0|<\frac{\varepsilon_0}{|\im(q_0)|}$, it holds $\overline{\mathcal{C}_J^+}\subset\Lambda$: if $x\in\mathcal{C}\cap\rr$, then $x\in C\cap\rr\subset\Lambda$; if the point $x+Jy$ belongs to $\mathcal{C}_J^+$, then its distance from $x+J_0y$ is $y|J-J_0|<\frac{y}{|\im(q_0)|}\varepsilon_0$, whence $x+Jy\in B\left(x+J_0y,\frac{y}{|\im(q_0)|}\varepsilon_0\right)\subseteq\Lambda$, as desired.
\end{proof}

We are now ready for the announced second local version of the Cauchy Formula. 

\begin{theorem}[Local Cauchy Formula II]\label{cauchyformula2}
Let $\Omega$ be a slice domain in $\hh$ and let $f:\Omega\to\hh$ be a slice regular function. Let $U$ be a symmetric open subset of $\hh$ such that $\overline{U}$ is compact and path-connected. If $\overline{U}\cap\rr\neq\emptyset$, we assume the same intersection to be a closed interval of $\rr$ included in $\Omega$. Suppose, for some $J_0\in\s$, that the boundary $\partial U_{J_0}$ is a finite union of disjoint rectifiable Jordan curves and that
\[\overline{U}_{J_0}^+\subset\Omega_{J_0}^+\,.\]
Then, there exists a real number $\varepsilon>0$ such that, for all $I\in\s$ and for all
\[q\in\bigcup_{|J-J_0|<\varepsilon} U_J^+\cup(U\cap\rr)\,,\]
it holds
\[f(q) =\int_{\partial U_I} (s-q)^{-*} (2\pi I)^{-1}ds\,\widetilde f(s)\,,\]
where, for all $x+yJ\in\partial U$ (with $x,y\in\rr,y>0$ and $J\in\s$), it holds
\[\widetilde f(x+yJ)=f^\circ_s(x+yJ_0)+yJf'_s(x+yJ_0)\]
and for all $x\in\partial U\cap\rr$ it holds $\widetilde f(x)=f^\circ_s(x)=f(x)$.
\end{theorem}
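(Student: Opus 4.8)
The plan is to deduce the formula from the Local Cauchy Formula I (Proposition~\ref{cauchyformula}) by feeding it a suitable extension triplet produced by the strengthened Local Extension Theorem~\ref{localextension2}. First I would check that $\mathcal{C}:=\overline{U}$ satisfies the hypotheses of Theorem~\ref{localextension2} relative to $J_0$: it is symmetric, compact and path-connected by assumption, one has $\emptyset\neq\mathcal{C}_{J_0}^+=\overline{U}_{J_0}^+\subset\Omega_{J_0}^+$, and, if $\mathcal{C}\cap\rr\neq\emptyset$, it is a closed interval included in $\Omega$. Theorem~\ref{localextension2} then yields an extension triplet $\big(\widetilde f,N,\Lambda\big)$ for $f$ and a real number $\delta>0$ with $\bigcup_{|J-J_0|\leq\delta}\overline{\mathcal{C}_J^+}\subset\Lambda$. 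Since $N$ is symmetric and $N\supseteq\Lambda\supseteq\overline{\mathcal{C}_{J_0}^+}$, every sphere through a point of $\overline U$ meets $\overline{\mathcal{C}_{J_0}^+}$ (for non-real points because $\overline U$ is symmetric; for real points because $U$ is open and symmetric, so each real point of $\overline U$ is a limit of points of $U_{J_0}^+\subseteq\mathcal{C}_{J_0}^+$), and therefore $\overline U\subseteq N$.

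\textbf{Applying the slicewise formula.} Because $U$ is symmetric, $\partial U$ is symmetric, so for every $I\in\s$ the slice $\partial U_I$ is the image of $\partial U_{J_0}$ under the linear isometry $L_{J_0}\to L_I$ sending $J_0$ to $I$; hence $\partial U_I$ is again a finite union of disjoint rectifiable Jordan curves. Applying Proposition~\ref{cauchyformula} to the bounded symmetric open set $U$ (with $\overline U\subseteq N$) and to each $I\in\s$ gives
\[
f(q)=\int_{\partial U_I}(s-q)^{-*}(2\pi I)^{-1}ds\,\widetilde f(s)\qquad\text{for all }q\in U\cap\Lambda.
\]
Setting $\varepsilon:=\delta$, I would then verify that the region appearing in the statement lies in $U\cap\Lambda$: each $U_J^+\subseteq\overline{\mathcal{C}_J^+}$ lies in $\Lambda$ whenever $|J-J_0|\leq\delta$, and $U\cap\rr\subseteq\overline U\cap\rr\subseteq\overline{\mathcal{C}_{J_0}^+}\subset\Lambda$, while both sets obviously lie in $U$.

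\textbf{Identifying the integrand.} Fix $s\in\partial U$. If $s=x\in\rr$, then $x\in\partial U\cap\rr\subseteq\Lambda$, so $\widetilde f(x)=f(x)=f^\circ_s(x)$. If $s=x+yJ$ with $y>0$, then $x+yJ_0\in\partial U_{J_0}^+\subseteq\overline{\mathcal{C}_{J_0}^+}\subset\Lambda$; since $N$ is a symmetric slice domain, $\widetilde f^\circ_s$ and $\widetilde f'_s$ are constant on the sphere $x+y\s\subseteq N$, whence
\[
\widetilde f(s)=\widetilde f^\circ_s(x+yJ)+yJ\,\widetilde f'_s(x+yJ)=\widetilde f^\circ_s(x+yJ_0)+yJ\,\widetilde f'_s(x+yJ_0).
\]
Since $f$ and $\widetilde f$ coincide on the slice domain $\Lambda$, their spherical values and derivatives agree at $x+yJ_0$: on the connected component of $(x+y\s)\cap\Lambda$ through $x+yJ_0$ — a nonempty open subset of the $2$-sphere $x+y\s$ — the identity $f^\circ_s(q)+\im(q)f'_s(q)=\widetilde f^\circ_s(q)+\im(q)\widetilde f'_s(q)$ holds with all four spherical quantities constant, which forces $f^\circ_s(x+yJ_0)=\widetilde f^\circ_s(x+yJ_0)$ and $f'_s(x+yJ_0)=\widetilde f'_s(x+yJ_0)$. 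Hence $\widetilde f(s)=f^\circ_s(x+yJ_0)+yJf'_s(x+yJ_0)$, which is exactly the function in the statement; substituting into the displayed Cauchy formula completes the proof.

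\textbf{Main obstacle.} I expect the crux to be this last identification: the Cauchy formula only delivers the abstract extension $\widetilde f$, and recovering its boundary values in terms of $f^\circ_s,f'_s$ along the single slice $L_{J_0}$ requires combining the sphere-constancy available on the symmetric domain $N$ with the coincidence of $f$ and $\widetilde f$ (and of their spherical invariants) on $\Lambda$. Everything else is bookkeeping about the inclusions among $U$, $\Lambda$ and $N$.
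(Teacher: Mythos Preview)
Your proof is correct and follows essentially the same approach as the paper: apply Theorem~\ref{localextension2} to $\mathcal{C}=\overline{U}$ to obtain an extension triplet $(\widetilde f,N,\Lambda)$ with $\overline{U}\subset N$, invoke the Cauchy formula on the symmetric domain $N$ (the paper cites \cite[Theorem~6.4]{librospringer} directly, you route through Proposition~\ref{cauchyformula}, which amounts to the same thing), and then identify the boundary values of $\widetilde f$ via the spherical value and derivative of $f$ along $L_{J_0}$. Your write-up is in places more explicit than the paper's (e.g.\ the rectifiability of $\partial U_I$ for arbitrary $I$ and the justification of $\overline{U}\subseteq N$), but the argument is the same.
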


\begin{proof}
By Theorem~\ref{localextension2}, there exist an extension triplet $\big(\widetilde f,N,\Lambda\big)$ for $f$ and a real number $\delta>0$ such that
\[\bigcup_{|J-J_0|\leq\delta} \overline{U_J^+}\subset\Lambda\,.\]
Since $N$ is a symmetric slice domain including $\Lambda$, it follows that $N\supset\overline{U}$. We can apply~\cite[Theorem 6.4]{librospringer} and find that 
\[\widetilde f(q) =\int_{\partial U_I} (s-q)^{-*} (2\pi I)^{-1}ds\,\widetilde f(s)\]
for all $q\in U$ and for all $I\in\s$. Now, $\widetilde f(q)=f(q)$ for all $q$ in
\[U\cap\Lambda\supset\bigcup_{|J-J_0|\leq\delta} U_J^+\cup(U\cap\rr)\,.\]
Moreover, for all $x\in\Lambda\cap\rr$ it holds $\widetilde f(x)=f(x)$. Finally, for all $x+yJ_0\in\Lambda$ with $y>0$ and for all $J\in\s$ it holds
\begin{align*}
\widetilde f(x+yJ)&=\widetilde f^\circ_s(x+yJ)+yJ\widetilde f'_s(x+yJ)\\
&=\widetilde f^\circ_s(x+yJ_0)+yJ\widetilde f'_s(x+yJ_0)\\
&=f^\circ_s(x+yJ_0)+yJf'_s(x+yJ_0)\,.\qedhere
\end{align*}
\end{proof}


\section{Spherical series expansions}\label{sec:sphericalseries}

This section provides \emph{spherical expansions} for slice regular functions over slice domains. For the case of symmetric slice domains, see~\cite[\S8.1]{librospringer} (which derived from~\cite{expansion}) and~\cite[Theorem 6.2]{gpsdivisionalgebras} (which derived from~\cite{gpssingularities}). We will use the open \emph{Cassini ball} of radius $R>0$ centered at $x_0+y_0\s$, namely
\[U(x_0+y_0\s,R) = \{q \in \hh : |(q-x_0)^2+y_0^2| < R^2\}\,.\]

\begin{theorem}
Let $\Omega$ be a slice domain in $\hh$ and let $f:\Omega\to\hh$ be a slice regular function. If $x_0+y_0\s$ intersects $\Omega$ and if $C$ is a connected component of the intersection, then there exist an open neighborhood $U$ of $C$ and quaternions $\{a_k\}_{k\in\nn}$ such that
\[f(q) = \sum_{n \in \nn}[(q-x_0)^2+y_0^2]^n [a_{2n} + qa_{2n+1}]\]
for all $q\in U$.
\end{theorem}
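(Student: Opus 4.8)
The plan is to reduce to the symmetric case via the Local Extension Theorem and then invoke the known spherical series expansion on a symmetric slice domain. First I would fix the sphere $x_0+y_0\s$ and the connected component $C$ of its intersection with $\Omega$. If $y_0=0$, then $C=\{x_0\}$ and the statement is just the ordinary power series expansion of a slice regular function at a real point; so assume $y_0>0$. Pick a point $p_0=x_0+y_0J_0\in C$. Apply Theorem~\ref{localextension2} (or, more simply, Theorem~\ref{localextension}) with a suitable symmetric compact path-connected set $\mathcal{C}$ whose half-slice $\mathcal{C}_{J_0}^+$ is a path inside $C$ (for instance, take a compact path-connected subset of $C$ together with a path joining it to $\rr$ inside $\Omega_{J_0}$, then take the symmetric completion). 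This produces an extension triplet $\big(\widetilde f,N,\Lambda\big)$ for $f$, where $N$ is a symmetric slice domain with $x_0+y_0\s\subset N$, and $\Lambda$ is a slice domain with $C\subseteq\Lambda\subseteq\Omega\cap N$ on which $f$ and $\widetilde f$ coincide. (Here one must check that $\mathcal{C}$ can indeed be chosen so that $C\subseteq\Lambda$; Theorem~\ref{localextension2} is designed precisely to give such control of $\Lambda$ near $\overline{\mathcal{C}_{J_0}^+}$.)

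Next I would apply the spherical series expansion on the symmetric slice domain $N$, namely~\cite[\S8.1]{librospringer}: since $x_0+y_0\s\subset N$, there is a radius $\rho>0$ and a sequence $\{a_k\}_{k\in\nn}\subset\hh$ such that
\[
\widetilde f(q) = \sum_{n\in\nn}[(q-x_0)^2+y_0^2]^n\,[a_{2n}+qa_{2n+1}]
\]
for all $q$ in the Cassini ball $U(x_0+y_0\s,\rho)\subseteq N$. This is the standard spherical expansion for slice regular functions on a symmetric slice domain containing the sphere $x_0+y_0\s$.

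Finally I would define $U$ to be (a suitable connected subset of) the intersection $U(x_0+y_0\s,\rho)\cap\Lambda$. Since $C\subseteq\Lambda$ and $C\subseteq U(x_0+y_0\s,\rho)$ (the Cassini ball of any positive radius contains the whole sphere $x_0+y_0\s$, hence $C$), the set $U$ is an open neighborhood of $C$; and on $U$ we have $f=\widetilde f$, so the displayed expansion for $\widetilde f$ is valid for $f$ as well, with the \emph{same} coefficients $\{a_k\}$. This gives the claimed expansion.

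The main obstacle is the very first step: arranging the extension triplet so that $\Lambda$ actually contains the \emph{entire} connected component $C$, not merely a neighborhood of a single point of $C$. The naive application of Theorem~\ref{localextension} only gives a small slice domain $\Lambda$ around $p_0$. The remedy is exactly the strengthened Local Extension Theorem~\ref{localextension2}: one takes $\mathcal{C}$ to be the symmetric completion of a compact path-connected subset of $C$ (augmented by a path to $\rr$ as in the proof of Theorem~\ref{localextension2}), so that $\overline{\mathcal{C}_{J_0}^+}$ is a compact path-connected subset of $\Omega_{J_0}$ whose half-slice covers $C$; the conclusion of Theorem~\ref{localextension2} then forces $\Lambda$ to contain a neighborhood of $\overline{\mathcal{C}_{J_0}^+}$, and in particular all of $C$. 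Once $C\subseteq\Lambda$ is secured, everything else is a routine matter of intersecting with a Cassini ball and quoting the symmetric-case expansion.
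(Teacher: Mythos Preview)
Your plan has a genuine gap at the step you yourself flag as the main obstacle: a single application of Theorem~\ref{localextension2} cannot produce an extension triplet with $C\subseteq\Lambda$. The connected component $C$ lives in the $2$-sphere $x_0+y_0\s$, so it is parametrized by an open subset of imaginary units $I\in\s$, possibly a very large cap. By contrast, the conclusion of Theorem~\ref{localextension2} says only that $\Lambda\supset\bigcup_{|J-J_0|\leq\delta}\overline{\mathcal C_J^+}$: this is a thin angular wedge around the fixed half-plane $L_{J_0}^+$, and it meets $x_0+y_0\s$ only in the small cap $\{x_0+y_0J:|J-J_0|\leq\delta\}$. Your proposed choice of $\mathcal C$ does not help: any nonempty subset of $C$ has symmetric completion equal to the whole sphere $x_0+y_0\s$, so $\mathcal C_{J_0}^+$ collapses to $\{x_0+y_0J_0\}$ together with your path to $\rr$; the phrase ``$\overline{\mathcal C_{J_0}^+}$ \dots\ covers $C$'' is geometrically impossible, since $\overline{\mathcal C_{J_0}^+}\subset\overline{L_{J_0}^+}$ meets the sphere in a single point.

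The paper's own argument avoids this trap by \emph{not} seeking one global extension. Instead it applies the basic Theorem~\ref{localextension} at each $q_0\in C$ to get a local expansion with coefficients $\{a_k^{\tau_0}\}$, then shows that whenever two such local neighborhoods overlap on $C$ the coefficients must agree (by uniqueness of spherical series), and finally uses the connectedness of $C$ (via a finite chain of overlapping triplets along a path from any $q_0$ to any $\widetilde q_0$) to conclude that all the local expansions share one common sequence $\{a_k\}$. The union of the local neighborhoods then furnishes $U$. To repair your argument you would need exactly this chaining step, at which point it becomes the paper's proof.
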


\begin{proof}
For any $q_0\in C$, by Theorem~\ref{localextension}, there exists an extension triplet $\tau_0=\big(\widetilde f_0,N_0,\Lambda_0\big)$ for $f$ such that $q_0\in\Lambda_0$. We can apply~\cite[Corollary 8.10]{librospringer} to $\widetilde f_0$ and find $R >0$, as well as quaternions $\{a^{\tau_0}_k\}_{k\in\nn}$, such that
\[\widetilde f_0(q) = \sum_{n \in \nn}[(q-x_0)^2+y_0^2]^n [a^{\tau_0}_{2n} + qa^{\tau_0}_{2n+1}]\]
in the open neighborhood $U(x_0+y_0\s,R)$ of $x_0+y_0\s$. It follows that
\[f(q) = \sum_{n \in \nn}[(q-x_0)^2+y_0^2]^n [a^{\tau_0}_{2n} + qa^{\tau_0}_{2n+1}]\]
in the neighborhood $\Lambda_0\cap U(x_0+y_0\s,R)$ of $q_0$.

Let us now assume that $q_0,q_1\in C$, that $\tau_0=\big(\widetilde f_0,N_0,\Lambda_0\big),\tau_1=\big(\widetilde f_1,N_1,\Lambda_1\big)$ are extension triplets for $f$ with $q_0\in\Lambda_0,q_1\in\Lambda_1$, and that $\Lambda_0\cap\Lambda_1\cap C\neq\emptyset$. It can be checked by induction that the equality 
\[\sum_{n \in \nn}[(q-x_0)^2+y_0^2]^n [a^{\tau_0}_{2n} + qa^{\tau_0}_{2n+1}]=\sum_{n \in \nn}[(q-x_0)^2+y_0^2]^n [a^{\tau_1}_{2n} + qa^{\tau_1}_{2n+1}]\,,\]
valid in $\Lambda_0\cap\Lambda_1\cap U(x_0+y_0\s,r)$ for some $r>0$, implies that $a^{\tau_0}_k=a^{\tau_1}_k$ for all $k\in\nn$.

Finally, for every $q_0,\widetilde q_0\in C$ it is possible to find a finite number of extension triplets $\tau_0,\ldots,\tau_m$ for $f$, with $\tau_t=\big(\widetilde f_t,N_t,\Lambda_t\big)$, with $q_0\in\Lambda_0,\widetilde q_0\in\Lambda_m$ and with $\Lambda_{t-1}\cap\Lambda_t\cap C\neq\emptyset$ for all $t\in\{1,\ldots,m\}$. It follows that the coefficients $\{a^{\tau_0}_k\}_{k\in\nn}$ of the expansion valid near $q_0$ coincide with the coefficients $\{a^{\tau_m}_k\}_{k\in\nn}$ of the expansion valid near $\widetilde q_0$. The thesis follows.
\end{proof}

We now come to spherical Laurent expansions. For $0\leq r_1<r_2$, we will use the open \emph{Cassini shell} of inner radius $r_1$ and outer radius $r_2$ centered at $x_0+y_0\s$, namely
\[U(x_0+y_0\s,r_1,r_2):=\{q\in\hh : r_1^2<|(q-x_0)^2+y_0^2|<r_2^2\}\,.\]
We begin by focusing on the case $r_1>0$.

\begin{theorem}\label{thm:sphericallaurent}
Let $\Omega$ be a slice domain in $\hh$ and let $f:\Omega\to\hh$ be a slice regular function. Suppose $x_0,y_0,r_1,r_2\in\rr$ to fulfill the inequalities $0<r_1<r_2\leq y_0$ and set $U:=U(x_0+y_0\s,r_1,r_2)$. If there exists $J_0\in\s$ such that
\[\overline{U_{J_0}^+}\subset\Omega\,,\]
then there exist a real number $\varepsilon>0$ and quaternions $\{a_k\}_{k\in\zz}$ such that
\[f(q) = \sum_{n \in \zz}[(q-x_0)^2+y_0^2]^n [a_{2n} + qa_{2n+1}]\]
for all $q$ belonging to the open neighborhood
\[\bigcup_{|J-J_0|<\varepsilon} U_J^+\]
of $U_{J_0}^+$.
\end{theorem}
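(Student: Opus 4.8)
The plan is to reduce to the symmetric case, exploiting that under the standing hypotheses $0<r_1<r_2\le y_0$ the closed Cassini shell $\overline U$, where $U:=U(x_0+y_0\s,r_1,r_2)$, is an admissible input for the strong Local Extension Theorem~\ref{localextension2}. To see this I would first record the geometry of $U$: the quantity $|(q-x_0)^2+y_0^2|$ depends only on $\re(q)$ and $|\im(q)|$ and tends to $+\infty$ as $|q|\to\infty$, so $U$ is symmetric and bounded and hence $\overline U$ is symmetric and compact. On $\rr$ one has $|(q-x_0)^2+y_0^2|=(\re(q)-x_0)^2+y_0^2\ge y_0^2\ge r_2^2$, so $U\cap\rr=\emptyset$ and $\overline U\cap\rr\subseteq\{x_0\}$, with $x_0$ occurring only when $r_2=y_0$. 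Writing $q=x_0+u+vI$ with $u,v\in\rr$, $v\ge0$, $I\in\s$, the set $\overline U$ is the union of the $2$-spheres $x_0+u+v\s$ over a compact ``annular'' region of $(u,v)$ around $(0,y_0)$ in the half-plane $\{v\ge0\}$; joining two points of $\overline U$ first inside one such sphere, then along a path in that region, then inside another sphere shows that $\overline U$ is path-connected. Finally $\overline U\neq\emptyset$ (for instance $x_0+\sqrt{y_0^2-(r_1^2+r_2^2)/2}\,J_0\in U_{J_0}^+$), and $\overline U\cap L_{J_0}^+=\overline{U_{J_0}^+}\setminus(\overline U\cap\rr)$ is a nonempty subset of $\Omega\cap L_{J_0}^+=\Omega_{J_0}^+$ by the hypothesis $\overline{U_{J_0}^+}\subset\Omega$, while in the borderline case $\overline U\cap\rr=\{x_0\}$ one has $x_0\in\overline{U_{J_0}^+}\subset\Omega$ as well.

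Next I would apply Theorem~\ref{localextension2} with $\mathcal{C}:=\overline U$ and the given $J_0$, obtaining an extension triplet $\big(\widetilde f,N,\Lambda\big)$ for $f$ and a real $\delta>0$ with $\bigcup_{|J-J_0|\le\delta}\overline{\mathcal{C}_J^+}\subset\Lambda$. Since $U_J^+=U\cap L_J^+\subseteq\overline U\cap L_J^+=\mathcal{C}_J^+$, this already gives
\[\bigcup_{|J-J_0|\le\delta}U_J^+\subset\Lambda\subseteq N\,.\]
In particular $\overline{U_{J_0}^+}\subset\Lambda\subseteq N$, and as $N$ is symmetric it therefore contains the symmetric completion of $\overline{U_{J_0}^+}$, which is $\overline U$; hence $\widetilde f$ is slice regular on the symmetric slice domain $N\supseteq\overline U\supseteq U$. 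Applying the spherical Laurent expansion over symmetric slice domains (see~\cite[\S 8.1]{librospringer} and~\cite[Theorem 6.2]{gpsdivisionalgebras}) to $\widetilde f$ then yields quaternions $\{a_k\}_{k\in\zz}$ with
\[\widetilde f(q)=\sum_{n\in\zz}[(q-x_0)^2+y_0^2]^n[a_{2n}+qa_{2n+1}]\]
for all $q\in U$.

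It remains to set $\varepsilon:=\delta$. For $q\in\bigcup_{|J-J_0|<\varepsilon}U_J^+$ one has, on the one hand, $q\in U_J^+\subset\Lambda$ for some $J$ with $|J-J_0|<\delta$, so $\widetilde f(q)=f(q)$ because the two functions coincide on $\Lambda$; on the other hand $q\in U_J^+\subseteq U$, so $\widetilde f(q)$ equals the series above. Thus $f$ equals that spherical Laurent series throughout $\bigcup_{|J-J_0|<\varepsilon}U_J^+$, which is open — being the intersection of $U$ with the open subset of $\hh\setminus\rr$ on which $\im(q)/|\im(q)|$ lies within distance $\varepsilon$ of $J_0$ — and is a neighborhood of $U_{J_0}^+$, as required.

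Essentially all the analytic content is already packaged into Theorem~\ref{localextension2} and into the symmetric-domain spherical Laurent expansion; the work in this proof is the geometric bookkeeping for the Cassini shell, chiefly checking that $\overline U$ is path-connected and that $\overline U\cap L_{J_0}^+\subseteq\Omega_{J_0}^+$, the only slightly delicate point being the borderline value $r_2=y_0$, where the real point $x_0$ enters $\overline U$ so that $\overline U\cap\rr$ becomes a (degenerate) closed interval rather than empty. One should also make sure the symmetric Laurent theorem is invoked in the form ``slice regular on a symmetric slice domain containing a prescribed Cassini shell implies a spherical Laurent expansion valid on that shell'', which is harmless here since $U$ is itself a Cassini shell contained in $N$.
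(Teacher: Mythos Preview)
Your proof is correct and follows essentially the same route as the paper: apply Theorem~\ref{localextension2} with $\mathcal{C}=\overline{U}$ to obtain an extension triplet $(\widetilde f,N,\Lambda)$ with $N\supset\overline{U}$, invoke the symmetric-domain spherical Laurent expansion for $\widetilde f$ on $U$, and then restrict to the wedge $\bigcup_{|J-J_0|<\delta}U_J^+\subset\Lambda$ where $\widetilde f=f$. You supply more detail than the paper does in verifying the geometric hypotheses of Theorem~\ref{localextension2} (path-connectedness of $\overline{U}$, the dichotomy $\overline{U}\cap\rr=\emptyset$ or $\{x_0\}$, and $\mathcal{C}_{J_0}^+\subset\Omega_{J_0}^+$), but the analytic content and structure are the same.
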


\begin{proof}
The closure of $U$, namely,
\[\overline{U}=\{q\in\hh : r_1^2\leq|(q-x_0)^2+y_0^2|\leq r_2^2\}\,\]
is a symmetric, compact and path-connected subset of $\hh$. If $r_2=y_0$, then $\overline{U}\cap\rr=\{x_0\}$. If, instead, $r_2<y_0$, then $\overline{U}\cap\rr=\emptyset$. 

Let us apply Theorem~\ref{localextension2} with $\mathcal{C}:=\overline{U}$. There exist an extension triplet $\big(\widetilde f,N,\Lambda\big)$ for $f$ and a real number $\delta>0$ such that $\Lambda$ includes the union
\[\bigcup_{|J-J_0|\leq\delta} \overline{\mathcal{C}_J^+}\]
of the slices $\overline{\mathcal{C}_J^+}=(\overline{U}\cap\rr)\cup\overline{U}_J^+$ of $\mathcal{C}$.

Since $N$ is a symmetric slice domain including $\Lambda$, it follows that $N\supset\mathcal{C}=\overline{U}$. By~\cite[Theorem 6.2]{gpsdivisionalgebras}, there exist quaternions $\{a_k\}_{k\in\zz}$ such that
\[\widetilde f(q) = \sum_{n \in \zz}[(q-x_0)^2+y_0^2]^n [a_{2n} + qa_{2n+1}]\]
for all $q\in U$.

Since $\widetilde f$ coincides with $f$ in 
\[\Lambda\cap U\supseteq\bigcup_{|J-J_0|\leq\delta} U_J^+\,,\]
the thesis follows.
\end{proof}

We can complete Theorem~\ref{thm:sphericallaurent} in two different senses. The last sentence in~\cite[Theorem 6.2]{gpsdivisionalgebras} allows the next remark for the case of a Cassini ball.

\begin{remark}
The statement of Theorem~\ref{thm:sphericallaurent} holds true if we substitute $U(x_0+y_0\s,r_2)$ for $U(x_0+y_0\s,r_1,r_2)$. In such a case, $a_k=0$ for all $k<0$.
\end{remark}

For the case $r_1=0$, i.e., the case of a Cassini shell obtained from a Cassini ball by erasing a single $2$-sphere, we can prove the next result.

\begin{theorem}\label{thm:sphericallaurent2}
Let $\Omega$ be a slice domain in $\hh$ and let $f:\Omega\to\hh$ be a slice regular function. Suppose $x_0,y_0,r_2\in\rr$ to fulfill the inequalities $0<r_2\leq y_0$ and set
\[V:=U(x_0+y_0\s,0,r_2)=U(x_0+y_0\s,r_2)\setminus(x_0+y_0\s)\,.\]
If there exist $I_0\in\s$ and $\delta>0$ such that
\[\bigcup_{|J-I_0|<\delta} \overline{V_J^+}\setminus\{x_0+y_0J\}\subset\Omega\,,\]
then there exist quaternions $\{a_k\}_{k\in\zz}$ such that
\[f(q) = \sum_{n \in \zz}[(q-x_0)^2+y_0^2]^n [a_{2n} + qa_{2n+1}]\]
for all $q$ belonging to
\[\bigcup_{|J-I_0|<\delta} V_J^+\,.\]
\end{theorem}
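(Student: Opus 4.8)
The plan is to exhaust $V$ from the inside by the Cassini shells $V_m:=U(x_0+y_0\s,1/m,r_2)$ (for integers $m$ with $1/m<r_2$), apply Theorem~\ref{thm:sphericallaurent} on each $V_m$, and then let $m\to+\infty$, checking that the coefficients stabilize and that the equality propagates to the whole cap-neighbourhood $G:=\bigcup_{|J-I_0|<\delta}V_J^+$ of the statement.

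First I would verify that each $V_m$ (for $m$ large) satisfies the hypotheses of Theorem~\ref{thm:sphericallaurent} with $J_0:=I_0$: the inequalities $0<1/m<r_2\le y_0$ hold for $m$ large, and since the modulus $|(q-x_0)^2+y_0^2|$ is $\ge 1/m^2>0$ on $\overline{(V_m)_{I_0}^+}$ while it vanishes exactly at $x_0+y_0I_0$, the inclusion $V_m\subset V$ gives $\overline{(V_m)_{I_0}^+}\subseteq\overline{V_{I_0}^+}\setminus\{x_0+y_0I_0\}\subset\Omega$, the last inclusion being the hypothesis evaluated at $J=I_0$ (allowed since $\delta>0$). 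Theorem~\ref{thm:sphericallaurent} then produces $\varepsilon_m>0$ and quaternions $\{a_k^{(m)}\}_{k\in\zz}$ with $f(q)=\sum_{n\in\zz}[(q-x_0)^2+y_0^2]^n[a_{2n}^{(m)}+qa_{2n+1}^{(m)}]$ for all $q$ in the open set $W_m:=\bigcup_{|J-I_0|<\varepsilon_m}(V_m)_J^+$.

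Next I would show the coefficients do not depend on $m$. For $m<m'$ one has $V_m\subset V_{m'}$, hence $W_m\cap W_{m'}\supseteq\bigcup_{|J-I_0|<\min(\varepsilon_m,\varepsilon_{m'})}(V_m)_J^+$, a nonempty open set on which both expansions equal $f$ and therefore each other; restricting to the plane $L_{I_0}$ (where, after the substitution $q=x_0+y_0I_0+w$ with $(q-x_0)^2+y_0^2=w(w+2y_0I_0)$, each side becomes an ordinary Laurent series around $x_0+y_0I_0$) and using uniqueness of Laurent coefficients yields $a_k^{(m)}=a_k^{(m')}$ for every $k$. Denote the common value by $a_k$ and let $S$ be the resulting spherical Laurent series. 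Since the set of $q$ where a spherical Laurent series converges is determined by the value of $|(q-x_0)^2+y_0^2|$, and $S$ converges on each $W_m$ (hence at points of every modulus in $(1/m^2,r_2^2)$), that set contains $\bigcup_m U(x_0+y_0\s,1/m,r_2)=U(x_0+y_0\s,0,r_2)\supseteq G$; thus $S$ is defined and slice regular on a symmetric open set containing $G$, and $f=S$ on $\bigcup_m W_m$, which contains $V_{I_0}^+$.

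The final and, I expect, hardest step is to propagate $f=S$ from $V_{I_0}^+$ to all of $G$, bearing in mind that $G$ is disjoint from $\rr$ and is not symmetric, so the Identity Principle~\ref{identity} is not directly available. Here I would argue sphere by sphere using the spherical value and derivative. Fix $x,y\in\rr$ with $x+yI_0\in V_{I_0}^+$; since $|(x+yJ-x_0)^2+y_0^2|$ is independent of $J$, the whole cap $\kappa:=\{x+yJ:|J-I_0|<\delta\}$ lies in $G$, and by the remark following Definition~\ref{def:sphericalvalueandderivative} the functions $f^\circ_s$ and $f'_s$ are constant on the connected set $\kappa$, say $f^\circ_s\equiv B'$ and $f'_s\equiv C'$ there. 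On the other hand, a convergent spherical Laurent series takes the form $S(x+yJ)=B+yJ\,C$ on each $2$-sphere $x+y\s$ in its domain of convergence, with $B,C$ independent of $J$ (expand term by term, using that $(q-x_0)^2+y_0^2$ is slice preserving). Choosing $m$ with $1/m^2<|(x+yI_0-x_0)^2+y_0^2|$ and $1/m<r_2$ forces $x+yJ\in W_m$ for all $|J-I_0|<\varepsilon_m$, so $f$ and $S$ agree at two distinct units $J_1\ne J_2$ of that small cap; this gives $(B'-B)+yJ_i(C'-C)=0$ for $i=1,2$, whence $C'=C$ and then $B'=B$, i.e. $f=S$ on the full cap $\kappa$. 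As $(x,y)$ ranges over all pairs with $x+yI_0\in V_{I_0}^+$, these caps cover $G$, and the theorem follows.
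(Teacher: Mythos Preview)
Your overall plan is sound and the final ``sphere-by-sphere'' propagation step is correct and rather neat. There is, however, a genuine gap in your coefficient-uniqueness step, and your route also differs from the paper's, so let me address both.

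\textbf{The gap.} Restricting the two expansions to $L_{I_0}^+$ near $p:=x_0+y_0I_0$ and invoking uniqueness of the \emph{classical} Laurent series in $w=z-p$ does \emph{not} determine the individual $a_k$. Here is a concrete obstruction: set $a_{2n+1}=\delta_{n,0}$ and choose $a_{2n}\in L_{I_0}$ to be the Taylor coefficients of $-x_0-\sqrt{\zeta-y_0^2}$ (the branch with value $-p$ at $\zeta=0$). The spherical power series $R(q)=\sum_{n\ge0}\zeta(q)^n[a_{2n}+q\,a_{2n+1}]$ then satisfies $R(z)=-z+z=0$ for every $z\in L_{I_0}^+$ in the Cassini ball, yet $R(x_0+yJ)=y(J-I_0)\ne 0$ for $J\ne I_0$. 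Thus equality of the classical Laurent expansions at $p$ alone cannot force $a_k^{(m)}=a_k^{(m')}$. The repair is already implicit in your own argument: you note that the two series agree on the $4$-dimensional open set $W_m\cap W_{m'}$, which contains $x+yJ$ for \emph{two} distinct $J$'s. Run your two-unit trick from the final step on the difference $S^{(m)}-S^{(m')}$ (which is slice regular on the full symmetric shell, so its spherical value and derivative are constant on each $x+y\s$); this kills the difference on the whole sphere $x+y\s$, hence on both planar components around $p$ and $\bar p$, and then either \cite[Theorem 6.2]{gpsdivisionalgebras} or the elementary subtraction $\sum_n\zeta^n b_{2n+1}=0\Rightarrow b_{2n+1}=0$, then $b_{2n}=0$, finishes it.

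\textbf{Comparison with the paper.} The paper proceeds differently: for a fixed $r_1\in(0,r_2)$ it applies Theorem~\ref{thm:sphericallaurent} at \emph{every} $J_0$ with $|J_0-I_0|<\delta$ (not just at $I_0$), obtaining expansions on overlapping wedge-neighbourhoods $W^{r_1}_{J_0,\varepsilon_{J_0}}$, and then uses a finite-chain/connectedness argument over the cap together with \cite[Theorem 6.2]{gpsdivisionalgebras} to identify all coefficient sequences with the one at $I_0$; finally it lets $r_1\downarrow 0$. Your approach is more economical in that it invokes Theorem~\ref{thm:sphericallaurent} only at $I_0$ and replaces the chaining over $J_0$'s by the direct slice argument (constancy of $f^\circ_s,f'_s$ on the connected cap $\kappa\subset(x+y\s)\cap\Omega$). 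Once the uniqueness step is patched as above, your proof is complete.
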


\begin{proof}
Fix $r_1$ with $0<r_1<r_2$. Let us apply Theorem~\ref{thm:sphericallaurent} with $U:=U(x_0+y_0\s,r_1,r_2)$ at each $J_0\in\s$ with $|J_0-I_0|<\delta$: there exist a real number $\varepsilon_{J_0}>0$ and quaternions $\left\{a^{J_0}_k\right\}_{k\in\zz}$ such that
\[f(q) = \sum_{n \in \zz}[(q-x_0)^2+y_0^2]^n [a^{J_0}_{2n} + qa^{J_0}_{2n+1}]\]
for all $q$ belonging to
\[W^{r_1}_{J_0,\varepsilon_{J_0}}:=\bigcup_{|J-J_0|<\varepsilon_{J_0}} U_J^+\,.\]

Let us assume for the moment $\varepsilon_{J_0}\leq2$ for all $J_0\in\s$ such that  $|J_0-I_0|<\delta$. If $J_1\in\s$, too, fulfills the inequality $|J_1-I_0|<\delta$, then it holds $W^{r_1}_{J_0,\varepsilon_{J_0}}\cap W^{r_1}_{J_1,\varepsilon_{J_1}}\neq\emptyset$ if, and only if, if the angle between $J_0$ and $J_1$ is less than $\arccos(1-\varepsilon_{J_0}^2/2)+\arccos(1-\varepsilon_{J_1}^2/2)$ (the latter being the sum of the ``openings'' of $W^{r_1}_{J_0,\varepsilon_{J_0}}$ and $W^{r_1}_{J_1,\varepsilon_{J_1}}$). If this is the case, then the equality
\[\sum_{n \in \zz}[(q-x_0)^2+y_0^2]^n [a^{J_0}_{2n} + qa^{J_0}_{2n+1}]=\sum_{n \in \zz}[(q-x_0)^2+y_0^2]^n [a^{J_1}_{2n} + qa^{J_1}_{2n+1}]\,,\]
valid for all $q\in W^{r_1}_{J_0,\varepsilon_{J_0}}\cap W^{r_1}_{J_1,\varepsilon_{J_1}}$, implies that the sequence $\left\{a^{J_0}_k\right\}_{k\in\zz}$ coincides with the sequence $\left\{a^{J_1}_k\right\}_{k\in\zz}$ by~\cite[Theorem 6.2]{gpsdivisionalgebras}. For each $J_0\in\s$, it is possible to find finitely many $J_1,\ldots,J_m\in\s$ (with $J_m=I_0$) such that $W^{r_1}_{J_t,\varepsilon_{J_t}}\cap W^{r_1}_{J_{t+1},\varepsilon_{J_{t+1}}}\neq\emptyset$ for all $t\in\{0,\ldots,m-1\}$, whence $\left\{a^{J_0}_k\right\}_{k\in\zz}$ coincides with the sequence $\left\{a^{I_0}_k\right\}_{k\in\zz}$. If we write $a_k$ for $a^{I_0}_k$, we have proven that
\[f(q) = \sum_{n \in \zz}[(q-x_0)^2+y_0^2]^n [a_{2n} + qa_{2n+1}]\]
for all $q\in W^{r_1}_{I_0,\delta}$. The case when there exists $J_0\in\s$ such that $|J_0-I_0|<\delta$ and $\varepsilon_{J_0}>2$ is even simpler to treat: in this case, it holds $W^{r_1}_{J_0,\varepsilon_{J_0}}=U\supseteq W^{r_1}_{I_0,\delta}$.

If we repeat this construction for $\widetilde r_1<r_1$ to find an expansion valid in $W^{\widetilde r_1}_{I_0,\delta}$, the coefficients of the new expansion will coincide with $\left\{a_k\right\}_{k\in\zz}$ because $W^{\widetilde r_1}_{I_0,\delta}\supseteq W^{r_1}_{I_0,\delta}$. This proves that
\[f(q) = \sum_{n \in \zz}[(q-x_0)^2+y_0^2]^n [a_{2n} + qa_{2n+1}]\]
for all $q$ belonging to
\[\bigcup_{0<\widetilde r_1<r_1}W^{\widetilde r_1}_{I_0,\delta}=\bigcup_{|J-I_0|<\delta} V_J^+\,.\qedhere\]
\end{proof}

Our results concerning spherical Laurent expansions allow us to make the following remark about singularities.

\begin{remark}\label{rmk:sphericalorder}
Let us assume we are in the situation of Theorem~\ref{thm:sphericallaurent2} and let us denote by $\mathscr{C}$ the spherical cap $\{x_0+y_0J : |J-I_0|<\delta\}$. We can use Theorem~\ref{R-transfactorization} and Definition~\ref{R-spherical order} to observe what follows.

Suppose, for all $n\leq0$, there exists $k<2n$ such that $a_k\neq0$: then every point of $\mathscr{C}$, except at most one, is an essential singularity for $f$.

Suppose, instead, there exists $n\leq0$ such that $a_k=0$ for all $k<2n$ and let $m$ be the maximum such $n$. Notice that, by construction, $\widehat\Omega:=\Omega\cup\mathscr{C}$ is a slice domain. By Proposition~\ref{prop:algebraofsemiregular}, $f$ is semiregular in $\widehat\Omega$. Moreover, if $C$ denotes the connected component of $(x_0+y_0\s)\cap\widehat\Omega$ that includes $\mathscr{C}$ then: $ord_f^C(x_0+y_0\s)=-2m$; $ord_f(p)=-2m$ for all points $p\in C$, except at most one (which must have lesser order). If $m=0$, then every point of $\mathscr{C}$ is a removable singularity for $f$.
\end{remark}

In particular, the hypothesis $\sup_{p\in C}ord_f(p)<+\infty$ taken in Definition~\ref{def:semiregular} is not restrictive if $\Omega\setminus\Omega'$ happens to be a cap within a $2$-sphere of the form $x+y\s$. We can further use Remark~\ref{rmk:sphericalorder} to show that the Casorati-Weierstrass Theorem does not hold for slice regular functions on slice domains that are not symmetric, at least not in the form it has in the complex setting.

\begin{example}
Let $T$ denote the closed line segment from $0$ to $i$ in $\hh$. Consider, as in Lemma~\ref{gamma}, the slice domain
\[\Gamma\left(T,\frac12\right)=B\left(0,\frac12\right)\cup\bigcup_{y\in(0,1]} B\left(yi,\frac{y}{2}\right)\,.\]
For all $I\in\s$, let $\vartheta_I$ denote the angle between $I$ and $i$. If $\vartheta_I\geq\frac\pi6=\arcsin\frac12$, then $\Gamma\left(T,\frac12\right)\cap L_I$ is the disk of radius $\frac12$ centered at $0$ in $L_I$; in particular, $I\not\in\Gamma\left(T,\frac12\right)$. As a consequence, $C:=\Gamma\left(T,\frac12\right)\cap\s$ and its closure in $\s$ are properly contained in $\s$.

Consider the slice domain $\Omega:=\Gamma\left(T,\frac12\right)\setminus C$ and the slice regular function $f: \Omega \to \hh$ defined by the formula 
\[f(q) = \exp((q^2+1)^{-1}) = \sum_{n \in \nn} (q^2+1)^{-n} \frac{1}{n!}\,.\]
Each imaginary unit $I\in C$ is an essential singularity for $f$. Let us consider the open set $U:=\Gamma\left(T,\frac12\right)\setminus\overline{B\left(0,\frac12\right)}$: it includes $C$ and it does not intersect $L_I$ for any $I\in\s$ with $\vartheta_I\geq\frac\pi6$. Since $f$ is slice preserving, for all $I\in\s$ with $\vartheta_I\geq\frac\pi6$ it holds
\[f(U\setminus C)\cap L_I=(f(U\setminus C)\cap\rr)\cup f(U_I\setminus\{I\})\subseteq\rr\cup f(\emptyset)=\rr\,.\]
Thus, $f(U\setminus C)$ cannot be an open dense subset of $\hh$.

Nevertheless, we can observe what follows. The domain $\Omega$ is starlike with respect to $0$, whence a simple domain by Proposition~\ref{prop:starlike}. Consider the slice regular extension $\widetilde f:\widetilde \Omega\to\hh$ of $f$ to the symmetric completion of its domain. The equality $\widetilde f(q):=\exp((q^2+1)^{-1})$ holds for all $q\in\widetilde \Omega$. By~\cite[Theorem 5.33]{librospringer} (derived from~\cite{moebius,singularities}), for every symmetric neighborhood $V$ of $\s$ with $V\setminus\s\subseteq\widetilde \Omega$, the image $\widetilde f(V\setminus\s)$ is dense in $\hh$.
\end{example}



\vfill


\end{document}